\documentclass[11pt]{article}
\usepackage{amsmath, amsthm, amssymb, amsfonts, latexsym,amscd}
\usepackage{authblk}
\usepackage{tikz}
\usetikzlibrary{graphs,graphs.standard} 
\usepackage{mathtools}
\usetikzlibrary{shadings,patterns}
\newtheorem{theorem}{Theorem}[section]
\newtheorem{corollary}[theorem]{Corollary}
\newtheorem{lemma}[theorem]{Lemma}
\newtheorem{proposition}[theorem]{Proposition}

\newtheorem{remark}[theorem]{Remark}
\newtheorem{example}[theorem]{Example}

\begin{document}

\title{\bf On the distances between the core center and other central parts of a tree}

\author[1]{Akash De \thanks{Corresponding Author: akash.de.2023@niser.ac.in} \thanks{Supported by the Council of Scientific and Industrial Research(CSIR) Grant No. 09/1002(17117)/2023-EMR-I} }
\author[2]{Kamal Lochan Patra \thanks{klpatra@niser.ac.in} }
\affil[1,2] {School of Mathematical Sciences,
National Institute of Science Education and Research Bhubaneswar,
P.O.- Jatni, District- Khurda, Odisha - 752050, India 
}
\affil[1,2] { Homi Bhabha National Institute(HBNI), 
Training School Complex, Anushakti Nagar, Mumbai - 400094, India 
}
\date{}
\maketitle
\begin{abstract}
Let $T$ be a tree. For a vertex $v\in V(T)$, the  eccentric subtree number $\epsilon_T(v)$ is defined as $\epsilon_T(v)=\min\{f_T(v,u): u\in V(T)\}$ where $f_T(v,u)$ denotes the number of subtrees of $T$ containing both $v$ and $u$. A  core vertex of $T$ is a vertex with the maximum eccentric subtree number, and the set of all the core vertices of $T$ is called the  core center of $T$. The core center of $T$ consists of either a single vertex or two adjacent vertices. There are other central concepts in a tree, such as the center, centroid, subtree core, and characteristic center, and these may all be different.
 
By $d_T(C, \mathfrak{C})$, $d_T(C_d, \mathfrak{C})$ and $d_T(S_c, \mathfrak{C})$ we mean the distance between center and core center, distance between centroid and core center and distance between subtree core and core center in $T$, respectively. We show that for any tree $T$ on $n\geq 6$ vertices,
\begin{enumerate}
\item[(i)]$d_T(C,\mathfrak{C})\leq \lfloor \frac{n-g_0-4}{2} \rfloor$;
\item[(ii)]$d_T(C_d,\mathfrak{C})\leq \lfloor \frac{n-5}{2} \rfloor$;
\item[(iii)] $d_T(S_c,\mathfrak{C})\leq\left\{
\begin{array}{ll}
 1,  &\text{if $n=7$,}\\
 n-g_0-3, &\text{if $n\neq 7$;}\\
\end{array}
\right.$
\end{enumerate}
where $g_0\geq 2$ be the smallest positive integer such that $2^{g_0-1}+g_0\geq n-3$. Moreover, we show that these bounds are best possible by obtaining a tree which attains these bounds. We also obtain a tree which maximizes the distance between characteristic center and core center over all trees on $n\geq 6$ vertices. The asymptotic behaviour of all these distances are also studied.\\

\noindent {\bf Keywords:} Center; Centroid; Characteristic center; Core center; Subtree core\\

\noindent {\bf AMS subject classification.}  05C05; 05C12; 05C50
\end{abstract}

\section{Introduction}

Let $T$ be a tree with vertex set $V(T)$ and edge set $E(T)$. A vertex of degree one in $T$ is called a pendant vertex. For $u,v\in V(T)$, the distance $d_T(u,v)$(or simply $d(u,v)$) between $u$ and $v$, is the number of edges in the $u$-$v$ path(path joining $u$ and $v$). For two subsets $X$ and $Y$ of $V(T)$, the distance $d(X,Y)$ between $X$ and $Y$ is defined as $d(X,Y)=\min\{d(x,y)|x\in X, y\in Y\}$. For $v\in V(T)$, $e(v)=\max\{d(v,u)|u\in V(T)\}$ is called the {\it eccentricity} of $v$ in $T$. The {\it radius} rad$(T)$ of $T$ is defined as rad$(T)=\min\{e(v)|v\in V(T)\}$  and the {\it diameter} diam$(T)$ of $T$ is defined as diam$(T)=\max\{e(v)|v\in V(T)\}$. It is clear that diam$(T)=\max\{d(u,v)|u,v\in V(T)\}.$ The {\it center} $C(T)$ of $T$ is defined as $C(T)=\{v\in V(T)|e(v)=rad(T)\}$.
 
For $v\in V(T)$, a {\it branch} at $v$ is a maximal subtree of $T$ containing $v$ as a pendant vertex. Note that the number of branches at $v$ is equal to the degree of $v$. The {\it weight} of $v$, denoted by $\omega_T(v)$(or simply $\omega(v)$) is the maximum number of edges contained in a branch at $v.$ The {\it centroid} $C_d(T)$ of $T$ is the set of vertices $v$ for which $\omega(v)$ attain its  minimum. The following result is due to Jordan \cite{Jor}.

\begin{theorem}[\cite{Bl}, Theorem 2.1, Theorem 2.3]\label{B and H}

Let $T$ be a tree. Then
\begin{enumerate}
\item[$(i)$] $C(T)$ consists of either a single vertex or two adjacent vertices;
\item[$(ii)$] $C_d(T)$ consists of either a single vertex or two adjacent vertices.
\end{enumerate}
\end{theorem}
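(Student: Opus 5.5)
For part $(i)$ I would use the standard leaf-stripping argument. Let $T'$ be obtained from $T$ by deleting all pendant vertices, and assume $T$ has at least three vertices. Every vertex of maximum distance from a given vertex $v$ is a pendant vertex. Hence, for every $v\in V(T')$, we get $e_{T'}(v)=e_T(v)-1$: the farthest vertex from $v$ in $T$ is a leaf, and its neighbour survives in $T'$ and is at distance $e_T(v)-1$ from $v$. Moreover, pendant vertices of $T$ never lie in $C(T)$ once $|V(T)|\ge 3$, since the neighbour of a leaf has strictly smaller eccentricity. Therefore $C(T)=C(T')$. Iterating, we reach a tree on one or two vertices, whose center is the whole vertex set: a single vertex, or two adjacent vertices. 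The trees on one or two vertices are the base cases and are checked directly.

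For part $(ii)$ I would argue directly with branch weights. Let $n=|V(T)|$ and suppose $v\in C_d(T)$ with $\omega(v)=m$. First I would show that $m\le \lfloor n/2\rfloor$, or more precisely that at most one branch at a centroid vertex can have at least $n/2$ edges. Suppose some branch $B$ at a vertex $v$ has more than $(n-1)/2$ edges, and let $u$ be the neighbour of $v$ in $B$. Then every branch at $u$ other than the one containing $v$ lies inside $B$ and has fewer edges than $B$. The branch at $u$ containing $v$ has $n-|E(B)|-1+1=n-|E(B)|$ edges, which is less than $|E(B)|$ when $|E(B)|>n/2$. So $\omega(u)<\omega(v)$, and such a $v$ cannot be a centroid vertex.

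Next, suppose $x,y\in C_d(T)$ are distinct and nonadjacent. Let $z$ be the neighbour of $x$ on the $x$–$y$ path. The branch $B_x$ at $x$ containing $y$ and the branch $B_y$ at $y$ containing $x$ together cover all edges. So $|E(B_x)|+|E(B_y)|\ge n-1+d(x,y)$, and in fact equality holds. Then $z$ has weight smaller than or equal to both, contradicting minimality unless the inequalities are tight. A cleaner route: show that if $x,y$ are both centroid vertices, then $\omega(x)\ge |E(B_x)|$ and $\omega(y)\ge|E(B_y)|$, where these branches share the path. Every vertex $w$ strictly between $x$ and $y$ has all its branches strictly smaller than $\max(|E(B_x)|,|E(B_y)|)$, which gives $\omega(w)<\omega(x)$, a contradiction. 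Hence any two centroid vertices are adjacent, so $|C_d(T)|\le 2$, since a tree has no triangle.

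The main obstacle is the bookkeeping in the last step. I need the branches at the interior vertex $w$ to be strictly smaller than the corresponding branch at $x$ or $y$. The branch of $w$ toward $y$ is a proper subtree of $B_x$, missing at least the edge $xz$. The branch of $w$ toward $x$ is a proper subtree of $B_y$, and the remaining branches at $w$ lie inside $B_x$ as well. This comparison of branch sizes is where care is needed.
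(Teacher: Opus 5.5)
The paper does not prove this statement. It is Jordan's classical theorem, quoted from Buckley--Harary, so there is no in-paper argument to compare with. Your proof is correct in substance.

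Part $(i)$ is the standard leaf-stripping argument and is complete. The fact that every farthest vertex is pendant gives $e_{T'}(v)\le e_T(v)-1$ on $V(T')$. The surviving neighbour of a farthest leaf gives the reverse inequality. Since leaves are never central, $C(T)=C(T')$.

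In part $(ii)$, your ``cleaner route'' is a self-contained proof. Take an interior vertex $w$ of the $x$--$y$ path. The branch at $w$ toward $y$, and every branch at $w$ off the path, is a subtree of $B_x$ not containing the edge $xz$. The branch at $w$ toward $x$ is a subtree of $B_y$ not containing the path edge at $y$. Hence $\omega(w)<\max(|E(B_x)|,|E(B_y)|)\le\omega(x)=\omega(y)$, contradicting minimality. So distinct centroid vertices are adjacent, and since a tree has no triangle, there are at most two.

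Two passages should be removed or fixed, because as written they are false.

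First, the assertion that equality holds in $|E(B_x)|+|E(B_y)|\ge n-1+d(x,y)$ fails whenever some interior vertex of the $x$--$y$ path has degree at least $3$. In that case $B_x\cap B_y$ contains every subtree hanging off such a vertex, not just the path. You abandon that line anyway, so just delete it.

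Second, your preliminary step uses the wrong threshold. A branch with more than $(n-1)/2$ edges does not rule out $v$ being a centroid vertex: in $P_4$ each middle vertex has a branch with $2>3/2$ edges, yet both are centroid vertices. The inequality $n-|E(B)|<|E(B)|$ needs $|E(B)|>n/2$. Also, ``at most one branch has at least $n/2$ edges'' holds at every vertex, simply because the branches at a vertex are edge-disjoint with $n-1$ edges in total. Corrected, this step shows that $\omega(v)>n/2$ forces $v\notin C_d(T)$. That is one direction of the characterization $u\in C_d(T)\iff\omega(u)\le n/2$, which the paper quotes at the start of its centroid subsection. Your final argument never uses it, so it can simply be dropped.
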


Both center and centroid are considered as central parts of a tree and they may be different. The concept of central parts in trees were started by Jordan (\cite{Jor}) in 1869 with the definitions of center and centroid. Later many researchers contributed to this study and define several other central parts like median, security center, telephone center, accretion center, weight balance center, latency center, pairing center, processing center, n-th power center of gravity, distance balance center etc. For more on different central parts of trees, we refer to the survey paper by Reid \cite{Reid} and the reference therein. Most of these above mentioned central parts are defined for a tree $T$ and are same as either $C(T)$ or $C_d(T)$. In 2005, Sz\'ekely and Wang introduced a new central part of a tree in \cite{Sze}, which is different from both center and centroid.

For $v\in V(T)$, $f_T(v)$ is the number of subtree of $T$ containing $v$. The {\it subtree core} of $T$ is the set of vertices for which $f_T(v)$ attains its maximum. We denote the subtree core of $T$ by $S_c(T)$. Note that for $|V(T)|\geq 3$, $S_c(T)$ does not contain any pendant vertex (see \cite{Des}, Remark 1.5). The following result is due to  Sz\'ekely and Wang.

\begin{theorem}[\cite{Sze}, Theorem 9.1]\label{S and W}
The subtree core of a tree consists of either a single vertex or two adjacent vertices.
\end{theorem}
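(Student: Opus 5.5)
The statement to prove is Theorem~\ref{S and W}: the subtree core $S_c(T)$ consists of a single vertex or two adjacent vertices. The plan is to show that $f_T$ is strictly concave along every path of $T$. Concavity forces the maximizers of $f_T$ to form a set that, along any path, is contiguous and has at most two elements. Since $T$ is a tree, such a set is either one vertex or two adjacent vertices.

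\textbf{Step 1: the edge identity.} Fix an edge $uv$. Removing $uv$ leaves a component $T_u$ containing $u$ and a component $T_v$ containing $v$. A subtree containing $u$ either avoids $v$, in which case it is a subtree of $T_u$ containing $u$, or it contains both endpoints. The same holds with $u$ and $v$ swapped. Writing $g(x)$ for the number of subtrees of the component containing $x$ that contain $x$, this gives
$$f_T(u)=g(u)\bigl(1+g(v)\bigr), \qquad f_T(v)=g(v)\bigl(1+g(u)\bigr),$$
and hence
$$f_T(u)-f_T(v)=g(u)-g(v).$$

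\textbf{Step 2: strict concavity along a path.} Take a path $x-u-y$ in $T$. Removing the edges $ux$ and $uy$ splits $T$ into three components:
\begin{itemize}
\item the component $X$ containing $x$ (away from $u$),
\item the component $Y$ containing $y$ (away from $u$),
\item the middle component $M$ containing $u$.
\end{itemize}
Let $a$, $b$, $c$ be the numbers of subtrees containing the root in $X$, $Y$, $M$ respectively.

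Counting as in Step 1 gives
$$f(u)=c(1+a)(1+b), \qquad f(x)=a\bigl(1+c(1+b)\bigr), \qquad f(y)=b\bigl(1+c(1+a)\bigr).$$
Expanding,
$$2f(u)-f(x)-f(y)=2c+ca+cb+2cab-a-b-cab-cab,$$
which simplifies to
$$2f(u)-f(x)-f(y)=2c+a(c-1)+b(c-1).$$
Since $c\ge 1$, this quantity is at least $2c\ge 2>0$. So $f(u)>\tfrac12\bigl(f(x)+f(y)\bigr)$ for every interior vertex $u$ of a path. Checking this algebra is the only real computation, and it is routine.

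\textbf{Step 3: conclude.} Suppose $p$ and $q$ are two maximizers of $f_T$ with $d(p,q)\ge 2$. Then $f_T$, restricted to the $p$–$q$ path, is strictly concave and takes equal values at the two ends. By Step 2 its values at interior vertices must strictly exceed the endpoint value $\max f_T$, which is a contradiction. Hence every two maximizers are adjacent.

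Three pairwise adjacent vertices would form a triangle, which a tree cannot contain. Therefore $S_c(T)$ has at most two vertices, and if it has two, they are adjacent. $S_c(T)$ is nonempty because $T$ is finite.

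The main obstacle I expect is only getting the three-component counting in Step 2 right. In particular, the count for $f(x)$ must include all subtrees that contain $u$ but avoid $x$: these are counted as $c(1+b)$, since $u$ may or may not extend into $Y$.
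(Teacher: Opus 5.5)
The paper does not prove this statement. It is quoted from Sz\'ekely and Wang \cite{Sze} and used as a black box, so there is no in-paper argument to compare against. Your proof is correct and self-contained:
\begin{itemize}
\item The counts $f(u)=c(1+a)(1+b)$, $f(x)=a\bigl(1+c(1+b)\bigr)$ and $f(y)=b\bigl(1+c(1+a)\bigr)$ are right.
\item The $abc$ terms cancel, leaving $2f(u)-f(x)-f(y)=2c+(a+b)(c-1)\ge 2$.
\item Step~3 is sound: along a $p$--$q$ path $w_0w_1\cdots w_k$ with $k\ge 2$, the increments $f(w_i)-f(w_{i-1})$ strictly decrease and sum to $0$. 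This forces $f(w_1)>f(w_0)=\max f_T$, and together with the no-triangle remark this finishes the proof.
\end{itemize}

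Your route is the same mechanism the paper invokes for the core center. Theorem~\ref{zha2} and Corollary~\ref{cor1} are the $\epsilon$-analogue of your Step~2. One difference is worth noticing. For $\epsilon$, the inequality $2\epsilon(w)\ge\epsilon(u)+\epsilon(v)$ may hold with equality when $\deg(w)=2$, so by itself it does not exclude three consecutive maximizers through a degree-$2$ vertex, and more work is needed in \cite{Zhang}. Your computation gives strict concavity, with slack at least $2c\ge 2$ at every interior vertex, so the conclusion is immediate.

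Step~1 is never used in Steps~2--3 and can be dropped.
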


Recently in \cite{Zhang}, Zhang et al. introduced a new central part of a tree $T$ and called it as core vertices of $T$. In \cite{Pandey3}, the authors extended this to any connected graph and termed it as core center of a connected graph. Based on this, the core center of a  tree is defined as the following. For a vertex $v$ of  $T$, the {\it eccentric subtree number} $\epsilon_T(v)$(or simply $\epsilon(v)$) is defined as $\epsilon_T(v)=\min\{f_T(v,u): u\in V(T)\}$ where $f_T(v,u)$ denotes the number of subtrees of $T$ containing both $v$ and $u$. A {\it core vertex} of $T$ is a vertex with the maximum eccentric subtree number, and the set of all core vertices is called the {\it core center} of $T$. We denote the  core center of $T$ by $\mathfrak{C}(T)$. Like center, centroid and subtree core, it  is observed that for $|V(T)|\geq 3$,  $\mathfrak{C}(T)$ does not contain any pendant vertex (see Remark \ref{reccentric}).
 
\begin{theorem}[\cite{Zhang}, Theorem 3.4 and \cite{Pandey3}, Theorem 4.13]\label{zha}
The core centre of a tree consists of either one vertex or two adjacent vertices.
\end{theorem}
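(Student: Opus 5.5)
The plan is to show that $\epsilon_T$ is strictly increasing as one walks along any path toward the maximizing region. More precisely, we aim to prove that if $v$ is not a core vertex, then $v$ has a neighbour $w$ with $\epsilon(w)>\epsilon(v)$. We also aim to show that along any path the values of $\epsilon$ are ``unimodal with a plateau of length at most one edge''. These two facts together force the maximizers to form a single vertex or an edge.

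First we record the basic multiplicativity of subtree counts. For an edge $xy$, let $T_x$ and $T_y$ be the two components of $T-xy$. For $u\in V(T_y)$ we have
\[ f_T(x,u)=f_{T_x}(x)\, f_{T_y}(y,u), \]
since a subtree containing $x$ and $u$ must contain the edge $xy$. Similarly, for $u\in V(T_x)$,
\[ f_T(y,u)=f_{T_y}(y)\, f_{T_x}(x,u). \]
Write $A=f_{T_x}(x)$ and $B=f_{T_y}(y)$. For $u\in T_x$ we have $f_T(x,u)=f_{T_x}(x,u)(1+B)$, because a subtree containing $x$ and $u$ either avoids $xy$ or extends into $T_y$ through $y$. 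Likewise, for $u\in T_y$, $f_T(y,u)=f_{T_y}(y,u)(1+A)$. Define
\[ m_x=\min_{u\in T_x} f_{T_x}(x,u), \qquad m_y=\min_{u\in T_y} f_{T_y}(y,u). \]
These give
\[ \epsilon(x)=\min\{(1+B)m_x,\ A m_y\}, \qquad \epsilon(y)=\min\{B m_x,\ (1+A)m_y\}. \]
Note that $m_x\le A$, with the minimum attained at a leaf of $T_x$ (or at $x$ itself if $T_x$ is a single vertex). The point is that the comparison between $\epsilon(x)$ and $\epsilon(y)$ reduces to comparing $A m_y$ with $B m_x$.

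The key step is the following orientation of edges. Orient $xy$ from $x$ to $y$ when $A m_y < B m_x$, intuitively when the ``far side'' from $x$ is the deficient one. In that case $\epsilon(x)=A m_y$, since $A m_y<B m_x\le(1+B)m_x$, and $\epsilon(y)\ge\min\{Bm_x,(1+A)m_y\}>Am_y$. Hence $\epsilon(y)>\epsilon(x)$. If instead $Am_y=Bm_x$, a short check shows $\epsilon(x)=\epsilon(y)$.

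Next we show that at each vertex $v$ at most one edge is oriented away from $v$, and that if there is an equality edge at $v$ then no edge is oriented away from $v$. Let $y_1,y_2$ be two neighbours of $v$ with their branch quantities. Orienting both away from $v$ would require $A_i m_{y_i}<B_i m_{x,i}$ for $i=1,2$. Here $A_i$ is the count of subtrees at $v$ in $T-\{\text{branch } i\}$, and that side contains branch $j$. The plan is to derive a contradiction using monotonicity: $A_1\ge (1+B_2)\cdot(\text{stuff})$ and $m_{x,1}\le B_2\, m_{y_2}$, which comes from the leaf in branch 2. Cross-multiplying the two inequalities should give an impossible chain. This local estimate is where I expect the main difficulty. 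The cleanest route is probably to express everything via $P=f_T(v)$ and the per-branch factors $(1+B_i)$, and then compare $\min$-terms directly.

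Once the orientation is established, the global argument is standard. Because $T$ is finite and $\epsilon$ strictly increases along oriented edges, every maximal directed walk ends at a sink: a vertex with no outgoing edge, or an equality edge. Suppose there were two sinks. The path between them must contain an edge pointing away from some interior vertex in both directions, or an edge pointing away from a sink, and either case contradicts the at-most-one-outgoing property. This is the same argument used for the center and centroid in Theorem \ref{B and H}. Therefore the sinks form a single vertex or a single equality edge. Every other vertex has strictly smaller $\epsilon$ than some neighbour, so the core center equals this sink set, which is one vertex or two adjacent vertices.
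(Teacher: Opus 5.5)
\textbf{There is a genuine gap, in both the local step and the global step.} Your reduction itself is correct. The identities $\epsilon(x)=\min\{(1+B)m_x,Am_y\}$ and $\epsilon(y)=\min\{Bm_x,(1+A)m_y\}$ check out, and so does the criterion that $\epsilon(x)<,=,>\epsilon(y)$ according as $Am_y<,=,>Bm_x$. This is a genuinely different route from the one the paper points to: it cites Zhang et al., who derive the result from the concavity inequality of Theorem \ref{zha2}.

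First, the local step is not carried out, and the inequality you propose for it is false. The bound $m_{x,1}\le B_2m_{y_2}$ fails when $T-v$ has, besides branch $1$, two single-vertex branches: then $m_{x,1}=2$ while $B_2m_{y_2}=1$. The leaf in branch $2$ only gives $(1+B_2)\,m_{x,1}\le A_1m_{y_2}$. With your version, cross-multiplying yields $A_1A_2<B_1^2B_2^2$, which is no contradiction.

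Second, and more seriously, the global step does not follow from the two local properties you state. They do not forbid a vertex $w$ carrying two equality edges $uw,wv$, i.e.\ a plateau $\epsilon(u)=\epsilon(w)=\epsilon(v)$. On such a path no edge is oriented, so neither of your contradictions arises, and three maximizers are not excluded. Your two properties are exactly what the weak inequality $2\epsilon(v)\ge\epsilon(y_1)+\epsilon(y_2)$ already gives. The plateau is precisely the case where that inequality can be tight (at a degree-$2$ vertex), so an extra argument is needed there.

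Both gaps close inside your own framework. Let $v$ have branches $S_1,\dots,S_k$ ($k\ge 2$) of $T-v$ through neighbours $y_1,\dots,y_k$. Put $b_j=f_{S_j}(y_j)$, $c_j=\min_{z\in V(S_j)}f_{S_j}(y_j,z)$ and $P=\prod_j(1+b_j)$. For the edge $vy_i$ with $x=v$ you have $A=P/(1+b_i)$, $B=b_i$ and $m_y=c_i$. Since $c_j<1+b_j$, the minimum defining $m_x$ is not attained at $v$, so $m_x=\frac{P}{1+b_i}\min_{j\ne i}\frac{c_j}{1+b_j}$. Your criterion then reads
\[ \epsilon(y_i)\ge\epsilon(v)\iff \frac{c_i}{b_i}\le\min_{j\ne i}\frac{c_j}{1+b_j}. \]
If this held for $i=1,2$, you would get $\frac{c_1}{b_1}\le\frac{c_2}{1+b_2}<\frac{c_2}{b_2}\le\frac{c_1}{1+b_1}<\frac{c_1}{b_1}$, which is absurd. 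So every vertex has at most one neighbour $y$ with $\epsilon(y)\ge\epsilon(v)$. This contains both of your local claims and also rules out two equality edges at a vertex.

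The global step is then immediate. Suppose two maximizers were at distance at least $2$, and take an interior vertex of minimal $\epsilon$ on the path between them. It would have two neighbours with $\epsilon$ at least its own, a contradiction. So any two maximizers are adjacent, and hence there are at most two of them, joined by an edge. Repaired this way, your argument is self-contained: it needs neither Theorem \ref{zha2} nor the eccentric-diameter facts of Theorem \ref{zha1}.
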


All the central parts discussed before are combinatorially defined. We now give an algebraic definition of a central part of a tree, which is different from  the  above combinatorially defined central parts. Let $G$ be a simple graph on $n$ vertices with $V(G)=\{v_1,v_2,\ldots,v_n\}$.  The adjacency matrix $A(G)=(a_{ij})$ of $G$ is the $n\times n$ matrix with $a_{ij}=1$ if $v_i$ and $v_j$ are adjacent and $0$ otherwise, for $1\leq i,j \leq n$. The Laplacian matrix $L(G)$ of $G$ is defined as $L(G)=D(G)-A(G),$ where $D(G)=(d_{ij})$  is the $n\times n$ diagonal matrix with $d_{ii}=deg(v_i)$ for $i=1,2,\ldots,n$. It is well known that $L(G)$ is a real symmetric positive semi-definite  matrix. The smallest eigenvalue of $L(G)$ is $0$ with all one vector as an eigenvector. The second smallest eigenvalue of $L(G)$  is positive if and only if $G$ is connected (see \cite{Fie1}). The second smallest eigenvalue of $L(G)$ is called the {\it algebraic connectivity} of $G$ and we  denote it by $\mu(G)$. An eigenvector corresponding to $\mu(G)$ is called a {\it Fiedler vector} of $G.$

Let $Y$ be a Fiedler vector of $G$. By $Y(v)$ we mean the co-ordinate of $Y$ corresponding to the vertex $v$ of $G$. A vertex $v$ is called a {\it characteristic vertex}  of $G$ with respect to (w.r.t.) $Y$ if it satisfies one of the following two conditions.
 \begin{enumerate}
 \item[$(i)$] $Y(v)=0$ and there exists a vertex $u$ adjacent to $v$ such that $Y(u)\neq 0$.
 \item[$(ii)$] there exists a vertex $u$ adjacent to $v$ such that $Y(v)Y(u)<0$.
 \end{enumerate}
 \noindent The set of all characteristic vertices of $G$ w.r.t. $Y$ is called the {\it characteristic set} of $G$ w.r.t. $Y$. We denote the characteristic set of $G$ w.r.t.  $Y$ by $\chi_c(G,Y)$. The {\it characteristic center} $\chi_c(G)$ of $G$ is defined as
$$\chi_c(G)=\{v\in V(G): v \in \chi_c(G,Y) \;\;\mbox{for some Fiedler vector $Y$}\}.$$

\begin{theorem}[\cite{Pandey3}, Proposition 5.11]\label{p and p}
The characteristic center of a tree consists of either a single vertex or two adjacent vertices.
\end{theorem}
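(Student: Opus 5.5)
The plan is to combine Fiedler's classical description of a single Fiedler vector of a tree with a uniqueness argument showing that the characteristic set does not depend on which Fiedler vector is chosen. Since $\chi_c(T)$ is the union of the sets $\chi_c(T,Y)$ over all Fiedler vectors $Y$, the statement follows once both facts are in place.

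\emph{Step 1 (one Fiedler vector).} Fix a Fiedler vector $Y$ of $T$. I will use Fiedler's theorem on acyclic matrices (Fiedler 1975). It says that exactly one of two things happens.
\begin{enumerate}
\item[(a)] \emph{Type I:} there is a unique vertex $v$ with $Y(v)=0$ that is adjacent to a vertex $u$ with $Y(u)\neq 0$. Along every path leaving $v$ the entries are then monotone and of constant sign.
\item[(b)] \emph{Type II:} $Y$ has no zero entry, and there is a unique edge $uw$ with $Y(u)>0>Y(w)$.
\end{enumerate}
I will reprove the monotonicity directly. Pick a zero vertex or a sign-changing edge, and restrict the eigen-equation $L(T)Y=\mu Y$ to a branch. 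This gives $(L_B)Y_B=\mu Y_B$ plus a boundary term, where $L_B$ is the principal submatrix of $L(T)$ on the branch $B$. Since $\mu$ is at most the smallest eigenvalue of $L_B$, the entrywise positive inverse $L_B^{-1}$ (the bottleneck matrix) forces $Y$ to be of one sign on $B$ and monotone away from the root. A second sign change inside some branch would then contradict this. Hence $\chi_c(T,Y)$ is either a single vertex or the two ends of an edge.

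\emph{Step 2 (independence of $Y$).} I will show that all Fiedler vectors share the same characteristic set. Following Kirkland--Neumann--Shader, I use Perron branches. For a vertex $v$, call a component $B$ of $T-v$ a \emph{Perron branch} if the spectral radius $\rho(L_B^{-1})$ is maximal among the components of $T-v$.
\begin{enumerate}
\item[(a)] The tree is of Type I with characteristic vertex $v$ if and only if $v$ has at least two Perron branches. In that case $\rho(L_B^{-1})=1/\mu$ for each of them.
\item[(b)] Otherwise there is a unique edge $uw$ for which the Perron branches at $u$ and at $w$ point toward each other, and the tree is of Type II with characteristic set $\{u,w\}$.
\end{enumerate}
These conditions are stated purely in terms of $T$, with no reference to $Y$. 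So the type, and the vertex or edge, are the same for every Fiedler vector.

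In the Type II case, $\mu$ is simple: two independent Fiedler vectors could be combined to produce a zero entry, giving a Type I vector, which contradicts the uniqueness in (b). Hence $Y$ is determined up to scaling.

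In the Type I case $\mu$ may be multiple. Here I will show directly that every Fiedler vector $Z$ vanishes at $v$. Suppose $Z(v)\neq 0$. Then the Perron branches $B_1,B_2$ at $v$ carry Perron vectors of $L_{B_i}^{-1}$ with eigenvalue $1/\mu$. Comparing $Z$ restricted to $B_i$ with these vectors (via the symmetric bottleneck matrices) yields a contradiction. Consequently the characteristic vertex of $Z$ is also $v$, by the uniqueness in Step 1.

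\emph{Main obstacle.} The hard part is Step 2 in the Type I case with a multiple eigenvalue $\mu$. The same vertex must serve as the characteristic vertex for every vector in a possibly higher-dimensional eigenspace. My first attempt will be the Perron-branch characterization: it is intrinsic to $T$, and it reduces the claim to Perron--Frobenius applied to the positive bottleneck matrices $L_B^{-1}$. Once Step 2 holds, $\chi_c(T)=\chi_c(T,Y)$ for any Fiedler vector $Y$, and Step 1 gives the result.
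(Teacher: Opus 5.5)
Your overall route is the one the paper relies on. The statement is quoted from Pandey--Patra, and right afterwards the paper notes that $\chi_c(T,Y)$ does not depend on $Y$ (Fiedler's Theorem~3.14, Merris's Theorem~2). Hence $\chi_c(T)=\chi_c(T,Y)$, and Fiedler's per-vector dichotomy finishes the proof.

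The one real difference is that you re-derive this independence from the Kirkland--Neumann--Shader Perron-branch description rather than citing Merris. That is legitimate, and it gives the intrinsic characterization the paper later uses (Theorems~\ref{thm:ch1}--\ref{thm:ch3}).

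Your Type~I argument that $Z(v)=0$ works. Pair $L_BZ_B=\mu Z_B+Z(v)e_w$ (with $w$ the neighbour of $v$ in $B$) with the positive Perron vector $x_B$ of a Perron branch; symmetry gives $Z(v)x_B(w)=0$. The equality $\rho(L_B^{-1})=1/\mu$ there follows from interlacing: the principal submatrix of $L(T)$ on $V(T)\setminus\{v\}$ then has its smallest eigenvalue with multiplicity at least two.

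Three points need repair.

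\textbf{(1) The optional reproof in Step~1 rests on a false inequality.} The bound $\mu\le\lambda_{\min}(L_B)$ fails in the Type~II case.
In $P_4$ (vertices $1,2,3,4$) one has $\mu=2-\sqrt2\approx0.586$ and characteristic set $\{2,3\}$. But the principal submatrix on $\{3,4\}$ is $\begin{pmatrix}2&-1\\-1&1\end{pmatrix}$, with smallest eigenvalue $(3-\sqrt5)/2\approx0.382$. This is both the component of $T-2$ containing $3$ and the component containing $3$ after deleting the edge $23$. The inequality also fails at any zero vertex other than the characteristic one.
So simply cite Fiedler's theorem. If you want it self-contained, get the sign pattern from Fiedler's connectivity theorem and monotonicity from $Y(i)-Y(p)=\mu\sum_{j\in S_i}Y(j)$, where $S_i$ is the subtree hanging below $i$ and $p$ is the parent of $i$.

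\textbf{(2) In Type~I, $Z(v)=0$ alone does not make $v$ the characteristic vertex of $Z$.} Fiedler's uniqueness concerns zero vertices adjacent to a nonzero vertex, so you need a neighbour of $v$ where $Z\neq 0$. This is quick:
\begin{itemize}
\item since $Z(v)=0$, $L_BZ_B=\mu Z_B$ on every component $B$ of $T-v$;
\item non-Perron components have $\lambda_{\min}(L_B)>\mu$, so $Z_B=0$;
\item on Perron components $\mu$ is the simple smallest eigenvalue, so $Z_B=c_Bx_B$;
\item $Z\neq 0$ forces some $c_B\neq0$, so $Z$ is nonzero at the neighbour of $v$ in that $B$.
\end{itemize}

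\textbf{(3) The Type~II simplicity argument needs (a) in per-vector form.} The contradiction comes from (a), not from ``the uniqueness in (b)''. You need: if some Fiedler vector $Z$ has characteristic vertex $k$ with $Z(k)=0$, then $k$ has at least two Perron components.
The KNS results are phrased for ``the'' type of $T$, which presupposes the independence you are proving, so prove this directly:
\begin{itemize}
\item on each component at $k$ where $Z_B\not\equiv0$, Fiedler's theorem makes $Z_B$ strictly one-signed, hence a Perron vector of $L_B$ with $\lambda_{\min}(L_B)=\mu$;
\item $Z\perp\mathbf{1}$ forces at least two such components;
\item a component with $\lambda_{\min}(L_B)<\mu$ could be combined with one of them into a vector orthogonal to $\mathbf{1}$ with Rayleigh quotient below $\mu$, which is impossible.
\end{itemize}
Then in case (b) no Fiedler vector has a zero entry. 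Since any two independent Fiedler vectors could be combined to produce one, the eigenspace is one-dimensional.

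With these fixes the argument is complete.
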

Note that for a tree $T$, $\chi_c(T,Y)$ is fixed for any Fiedler vector $Y$ (see \cite{Fie2}, Theorem 3,14 and \cite{Rm}, Theorem 2). The center, centroid, subtree core, core center and characteristic center may all be the same in a tree. For example, in the path $P_n:12\cdots n$ and the star $K_{1,n-1}$, it can be easily verified  that 
\begin{equation*}\label{CMS_path}
C(P_n)=C_d(P_n)=S_c(P_n)=\mathfrak{C}(P_n)=\chi_c(P_n)=
\begin{cases}
\{{\frac{n}{2}},{\frac{n}{2}+1}\}  &\mbox{if n is even},\\ 
\{{\frac{n+1}{2}}\}   & \mbox{if n is odd}
\end{cases}
\end{equation*}
and 
\begin{equation*}\label{CMS_star}
C(K_{1,n-1})=C_d(K_{1,n-1})=S_c(K_{1,n-1})=\mathfrak{C}(K_{1,n-1})=\chi_c(K_{1,n-1})=\{v\}
\end{equation*}
\noindent where $v\in V(K_{1,n-1})$ is the vertex of degree $n-1$. In Example 5.8 of \cite{Pandey3}, the authors have provided a tree in which all these central parts are different. 

Among all trees on $n$ vertices, the problems of maximizing the pairwise distances between center, centroid and characteristic center are first studied in  \cite{Klp}. Later different authors have contributed to this study and considered more interesting problems related to these pairwise distances (see \cite{Des},\cite{Pandey2},\cite{Pandey4} and \cite{Hs}). The asymptotic nature of these distances are also explored by different researchers (see \cite{Na} and \cite{Pandey2}). In this article, we consider the problems of maximizing the distances between center and core center, centroid and core center, subtree core and core center, and characteristic center and core center over trees on $n$ vertices. 

\subsection{Organization of the paper}

In Section \ref{Prim}, we explore some properties of eccentric subtree number of a tree  $T$. We also introduce the notion of eccentric diameter of $T$ and discuss some of its features. We start Section \ref{pst} with an introduction to path-star trees. We then discuss the position and ordering of center, centroid, characteristic center, subtree core and core center in a path-star tree. In Section \ref{pcc}, we study the effect on the position of core center of a tree under some perturbations. Section \ref{dcp} is dedicated for the study of maximizing the distances between center and core center, centroid and core center, subtree core and core center, and characteristic center and core center over trees on $n$ vertices. We obtain strict upper bounds for these distances and also some trees which attain these bounds. Subsequently, we discuss the asymptotic behaviour of these distances.

\section{Preliminaries}\label{Prim}

Let $T$ be a tree. The eccentric diameter $\epsilon^d(T)$ of $T$ is defined as $\epsilon^d(T)=\min\{f_T(u,v): u, v\in V(T)\}$. The following results on eccentric subtree number and eccentric diameter are due to Zhang et al.

\begin{theorem}[\cite{Zhang}, Proposition 2.1, Theorem 2.3]\label{zha1}
Let $T$ be a tree. Then the following hold:
\begin{enumerate}
\item[(i)] For $v\in V(T),$ if $\epsilon(v)= f_T(v,u)$ then $u$ is a pendant vertex;
\item[(ii)] If $\epsilon^d(T)=f_T(u,v)$ then both $u$ and $v$ are pendant vertices;
\item[(iii)] If $\epsilon^d(T)=f_T(u,v)$ for some pendant vertices $u$ and $v$ then $\epsilon(w)=\min\{f_T(w,u), f_T(w,v)\}$ for any $w\in V(T)$.
\end{enumerate}
\end{theorem}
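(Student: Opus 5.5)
We will prove the three parts of Theorem \ref{zha1} in order, using one basic tool: comparing counts of subtrees that contain a fixed set of vertices.

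\emph{Key observation.} For vertices $a,b,c$ with $b$ on the $a$-$c$ path, every subtree containing $a$ and $c$ contains $b$. Hence $f_T(a,c)\le f_T(a,b)$. If moreover $b\neq c$, the inequality is strict: the subtree induced by the $a$-$b$ path contains $a$ and $b$ but not $c$. We also use the multiplicativity of subtree counts along a path. If $P$ is the $u$-$v$ path, then
$$f_T(u,v)=\prod_{x\in V(P)} g(x),$$
where $g(x)$ is the number of subtrees containing $x$ in the component of $T-E(P)$ that contains $x$.

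\emph{Part (i).} Suppose $\epsilon(v)=f_T(v,u)$ and $u$ is not pendant. Choose a neighbour $u'$ of $u$ not on the $v$-$u$ path. Such a neighbour exists if $u\neq v$ and $\deg u\ge 2$. If $u=v$, then $\deg u\ge 2$ and we may take any neighbour. In either case $u$ lies on the $v$-$u'$ path and $u\neq u'$, so the observation gives $f_T(v,u')<f_T(v,u)$, contradicting minimality. The degenerate case $|V(T)|\le 2$ is trivial. So $u$ is pendant.

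\emph{Part (ii).} This follows the same way. If $\epsilon^d(T)=f_T(u,v)$, then $f_T(u,v)=\min_w f_T(u,w)=\epsilon(u)$, so (i) forces $v$ to be pendant. By symmetry $u$ is pendant as well.

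\emph{Part (iii)} is the main step. Fix $w$ and any pendant vertex $x$. We want to show
$$f_T(w,x)\ge \min\{f_T(w,u),f_T(w,v)\}.$$
Let $P$ be the $u$-$v$ path, and let $p$ be the vertex of $P$ closest to $w$. The path from $w$ to $x$ leaves $P$ at some vertex $q$; this happens either at $p$, or $x$ hangs off $P$ at some other vertex $q$.

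Now write the counts as products over path vertices. For $x$ compared with, say, $v$, the factors of $f_T(w,x)$ and $f_T(w,v)$ agree on the common portion of the two paths and differ only beyond $q$. The minimality $f_T(u,v)\le f_T(u,x)$ says exactly that the product of factors along the $q$-$v$ segment of $P$, with branches at $q$ appropriately counted, is at most the corresponding product along the $q$-$x$ branch. That segment comparison involves only the part beyond $q$.

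Choose whichever of $u$ and $v$ lies on the side of $q$ away from $w$. If $q$ is on the $p$-$v$ side, use $v$; otherwise use $u$. Then the same segment comparison transfers to the pair $(w,x)$ versus $(w,v)$, because the factors on the common part of the paths, including the factor at $q$ with $P$-directions removed, are identical. This yields $f_T(w,x)\ge f_T(w,v)$.

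The technical obstacle is the bookkeeping of the factor at the branching vertex $q$. We handle it by defining $g$ relative to the removed edge set of the union of the two relevant paths (a spider with three legs from $q$). Then both minimality inequalities factor over the same spider, and the common factors cancel. Once this cancellation is set up carefully, the case $x$ hanging off at $q=p$ and the case $q\ne p$ are handled uniformly. Combining with (i), the minimum over all $x$ is attained at $u$ or $v$, which gives $\epsilon(w)=\min\{f_T(w,u),f_T(w,v)\}$.
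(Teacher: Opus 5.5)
The paper gives no proof of this statement; it is quoted from Zhang et al., so your argument has to stand on its own. Parts (i) and (ii) are correct. The gap is in (iii).

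In (iii), your cancellation is valid whenever the $w$-$x$ and $w$-$v$ paths branch apart at the same vertex as the $u$-$x$ and $u$-$v$ paths. That covers three cases:
\begin{enumerate}
\item[(a)] $x$ projects onto $P$ at some $q\neq p$ (use the endpoint beyond $q$);
\item[(b)] $x$ projects onto $p$ and $w=p$;
\item[(c)] $x$ projects onto $p$ and $w,x$ lie in different components of $T-p$.
\end{enumerate}
In each of these, only the factors of the $x$-leg and the $v$-leg at the branching vertex survive.

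Your case split (``the path from $w$ to $x$ leaves $P$ either at $p$ or at some other $q$'') omits one configuration: $w\notin V(P)$ and $x$ lies in the same component $B$ of $T-p$ as $w$. Then the $w$-$x$ path never meets $P$. The $w$-$x$ and $w$-$v$ paths branch at the vertex $c\neq p$ where the $w$-$p$ and $x$-$p$ paths meet. The $u$-$x$ and $u$-$v$ paths, by contrast, branch at $p$. These are two different spiders, so the common factors do not cancel. The inequality $f_T(u,v)\le f_T(u,x)$, a ratio comparison at $p$, is not equivalent to $f_T(w,v)\le f_T(w,x)$, a ratio comparison at $c$. So the claim that all cases are ``handled uniformly'' is false, and a genuine inequality is needed.

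Here is one way to close the gap. Since $u,v$ are pendant, $p$ is an interior vertex of $P$. Set up notation:
\begin{enumerate}
\item[$\bullet$] $\rho$ is the neighbour of $p$ in $B$.
\item[$\bullet$] For $z\in\{u,v\}$, $B_z$ is the component of $T-p$ containing $z$, $z_1$ is its vertex adjacent to $p$, $a_z=1+f_{B_z}(z_1)$ and $b_z=f_{B_z}(z_1,z)$.
\item[$\bullet$] $h$ is the product of $1+f$ over the remaining branches at $p$.
\end{enumerate}
Factoring at $p$ gives
\begin{enumerate}
\item[$\bullet$] $f_T(w,v)=f_T(w,p,v)=f_B(\rho,w)\,a_u b_v h$;
\item[$\bullet$] $f_T(w,x)\ge f_T(w,x,p)=f_B(\rho,w,x)\,a_u a_v h$;
\item[$\bullet$] $f_T(u,x)=b_u f_B(\rho,x)\,a_v h$;
\item[$\bullet$] $f_T(u,v)=b_u(1+f_B(\rho))\,b_v h$.
\end{enumerate}
Assume $x\neq w$; the case $x=w$ is trivial. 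Inside $B$,
\[ \frac{f_B(\rho,x)}{1+f_B(\rho)}<\frac{f_B(\rho,x)}{f_B(\rho,c)}=\frac{f_B(\rho,w,x)}{f_B(\rho,w)}. \]
The equality holds because both of the last two ratios equal $f_D(c',x)/(1+f_D(c'))$, where $D$ is the component of $B-c$ containing $x$ and $c'$ is its vertex adjacent to $c$. Now suppose $f_T(w,x)<f_T(w,v)$. Then $f_B(\rho,w,x)\,a_v<f_B(\rho,w)\,b_v$. By the displayed inequality this gives $f_B(\rho,x)\,a_v<(1+f_B(\rho))\,b_v$, that is, $f_T(u,x)<f_T(u,v)$. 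This contradicts $\epsilon^d(T)=f_T(u,v)$.

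With this subcase added, your proof of (iii) is complete.
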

For $v_1,v_2,\ldots,v_k\in V(T)$, $f_T(v_1,v_2,\ldots,v_k)$ denotes the number of subtrees of T containing $v_1,v_2,\ldots,v_k$. By $f_T(v_1,\ldots,v_{k-1},\overline{v_k})$, we mean the number of subtrees of $T$ containing $v_1,\ldots,v_{k-1}$ but not $v_k$.

\begin{remark}\label{reccentric}
Like center, centroid and subtree core, the core centre of a tree on $n\geq 3$ vertices does not contain any pendent vertex.
\end{remark}
This can be seen as follows. Let $x$ be a pendant vertex of $T$ and let $xw \in E(T)$. Let $\epsilon^d(T)=f_T(v,u)$ for some pendant vertices $u$ and $v$. Here $w$ is not a pendant vertex as $n\geq 3$. By Theorem \ref{zha1}(iii), $\epsilon(x)=\min\{f_T(x,u),f_T(x,v)\}\;\; \mbox{and}\;\; \epsilon(w)=\min\{f_T(w,u), f_T(w,v)\}$.  Suppose $x$ is neither equal to $u$ nor equal to $v$. Then $f_T(x,u)=f_T(x,w,u)<f_T(w,u)$ and $f_T(x,v)=f_T(x,w,v)<f_T(w,v)$. So $\epsilon(x)=\min\{f_T(x,u), f_T(x,v)\}<\min\{f_T(w,u), f_T(w,v)\}=\epsilon(w)$. Otherwise, without loss of generality, take $x=u$. Then $\epsilon(x)=f_T(x,v)=f_T(x,w,v)<\min\{f_T(w,x), f_T(w,v)\}=\epsilon(w)$. So, $\epsilon(x)<\epsilon(w)$ and hence $x\notin \mathfrak{C}(T)$.

Let $u,w,v$ be three vertices of $T$ with $uw,wv \in E(T)$. Then $2e(w)\leq e(u)+e(v)$ where $e$ is the eccentricity function on $V(T)$ (\cite{Pandey3}, Theorem 3.5). A result analogous to this on the eccentric subtree number function on $V(T)$ is proved in \cite{Zhang}.
\begin{theorem}[\cite{Zhang}, Theorem 3.1]\label{zha2}
Suppose $u,v,w\in V(T)$ such that $uw,wv \in E(T)$. Then $2\epsilon(w)\geq \epsilon(u)+\epsilon(v)$ with a possible equality only if $deg(w)=2$. 
\end{theorem}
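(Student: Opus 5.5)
We prove the inequality $2\epsilon(w)\geq \epsilon(u)+\epsilon(v)$ of Theorem \ref{zha2} by reducing $\epsilon$ to counts of subtrees through the edges $uw$ and $wv$. Deleting the edges $uw$ and $wv$ splits $T$ into three components $T_u\ni u$, $T_w\ni w$, $T_v\ni v$. For a vertex $x$ of one of these components, let $g(x)$ be the number of subtrees of that component containing $x$. Write $a=g(u)$, $b=g(w)$ and $c=g(v)$.

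By Theorem \ref{zha1}(iii), fix pendant vertices $p,q$ with $\epsilon^d(T)=f_T(p,q)$. Then $\epsilon(x)=\min\{f_T(x,p),f_T(x,q)\}$ for every $x$. Since the minimum of a sum dominates the sum of minima,
\begin{equation*}
\epsilon(u)+\epsilon(v)\le \min\{f_T(u,p)+f_T(v,p),\; f_T(u,q)+f_T(v,q)\}.
\end{equation*}
It therefore suffices to prove the one-vertex inequality
\begin{equation*}
f_T(u,r)+f_T(v,r)\le 2f_T(w,r)
\end{equation*}
for every vertex $r$. Taking $r=p$ and $r=q$ and then the minimum gives the theorem.

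To prove the one-vertex inequality, we split into cases by the component containing $r$.

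\emph{Case $r\in T_u$.} Subtrees containing $v$ and $r$ must also contain $w$, so $f(v,r)=f(w,r)\cdot\frac{c}{c+1}$. Indeed, every subtree through $w$ and $r$ either avoids $T_v$ or meets it in a subtree containing $v$, which gives the factor $(1+c)$ in $f(w,r)$ versus $c$ in $f(v,r)$. Also $f(u,r)=h$ and $f(w,r)=h\,b(1+c)$, where $h$ counts subtrees of $T_u$ through $u$ and $r$. Hence
\begin{equation*}
f(u,r)+f(v,r)=h(1+bc)\le 2hb(1+c),
\end{equation*}
since $b\ge1$.

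\emph{Case $r\in T_v$.} This is symmetric to the previous case.

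\emph{Case $r\in T_w$.} Write $f(w,r)=k(1+a)(1+c)$, where $k$ counts subtrees of $T_w$ through $w$ and $r$. Then $f(u,r)=ka(1+c)$ and $f(v,r)=kc(1+a)$. The desired inequality reads $a+c+2ac\le 2(1+a+c+ac)$, which is clearly strict.

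In every case the one-vertex inequality holds, so the inequality of the theorem follows.

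For the equality statement, suppose $\deg(w)\ge3$. Then $T_w$ has a vertex other than $w$, so $b\ge2$. This makes the first two cases strict, because $1+bc<2b(1+c)$. The third case is always strict. Hence equality $2\epsilon(w)=\epsilon(u)+\epsilon(v)$ can occur only when $\deg(w)=2$.

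The main obstacle is the case bookkeeping, in particular checking the product formulas for the subtree counts. These are routine, because every subtree containing two vertices contains the path between them.
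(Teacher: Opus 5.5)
Your strategy (reduce to a one-vertex inequality $f_T(u,r)+f_T(v,r)\le 2f_T(w,r)$ and split on which of $T_u,T_w,T_v$ contains $r$) is sound. However, the count in the case $r\in T_u$ is wrong, and the mirror case $r\in T_v$ inherits the error. The claim $f_T(u,r)=h$ is false: a subtree containing $u$ and $r$ need not stay inside $T_u$. It may cross the edge $uw$, and then it contains a subtree of $T_w$ through $w$ and, optionally, a subtree of $T_v$ through $v$. The correct count is $f_T(u,r)=h\bigl(1+b(1+c)\bigr)=h+f_T(w,r)$. One consistency check: $f_T(u,r)\ge f_T(w,r)$ must hold because $u$ lies on the $w$-$r$ path, yet your value $h$ is strictly smaller than $f_T(w,r)=hb(1+c)$. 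Because you undercount the left-hand side, the inequality $h(1+bc)\le 2hb(1+c)$ that you verify is strictly weaker than what is needed, so it does not prove the claim. Your formulas would in fact make every case strict for every $w$, and that is false. In the path $1\,2\,\cdots\,7$ take $u=4$, $w=3$, $v=2$, $r=7$. Then $h=b=1$ and $c=2$, so your formula gives $f_T(4,7)=1$, whereas $f_T(4,7)=4$. Moreover $\epsilon(4)+\epsilon(2)=4+2=6=2\epsilon(3)$, so equality does occur.

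The repair is easy, and it also fixes the equality analysis. With the correct count you get $f_T(u,r)+f_T(v,r)=h(1+b+2bc)$, hence $2f_T(w,r)-f_T(u,r)-f_T(v,r)=h(b-1)\ge 0$. Symmetrically, the difference is $h'(b-1)$ when $r\in T_v$, where $h'$ counts subtrees of $T_v$ through $v$ and $r$. Your third case is correct, with difference $k(2+a+c)>0$. So the one-vertex inequality holds, and it is strict for every $r$ exactly when $b\ge 2$, i.e.\ when $\deg(w)\ge 3$. The strictness you need in the first two cases is therefore $1+b+2bc<2b+2bc$, which genuinely uses $b\ge 2$. It is not $1+bc<2b(1+c)$, which holds trivially even for $b=1$. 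Separately, you do not need Theorem~\ref{zha1}(iii): choose $r$ with $\epsilon(w)=f_T(w,r)$, and then directly $\epsilon(u)+\epsilon(v)\le f_T(u,r)+f_T(v,r)\le 2f_T(w,r)=2\epsilon(w)$. The paper quotes this result from Zhang et al.\ without proof, so there is no in-paper argument to compare against.
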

In \cite{Zhang}, the authors have used Theorem \ref{zha2} to prove that $\mathfrak{C}(T)$ contains either one vertex or two adjacent vertices. The following corollary is very useful.
\begin{corollary}\label{cor1}
Let $T$ be a tree and let $P:v_1v_2\cdots v_k$ be a $v_1$-$v_k$ path in $T$. If $\epsilon(v_1)>\epsilon(v_2)$  then $\epsilon(v_1)>\epsilon(v_2)>\cdots>\epsilon(v_k)$. 
\end{corollary}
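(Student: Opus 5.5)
We argue by induction along the path, using Theorem \ref{zha2} at each interior vertex. It is enough to show that for every $i$ with $2\le i\le k-1$, the inequality $\epsilon(v_{i-1})>\epsilon(v_i)$ forces $\epsilon(v_i)>\epsilon(v_{i+1})$. The base case $\epsilon(v_1)>\epsilon(v_2)$ is the hypothesis, and chaining the implications gives the full strict decreasing chain $\epsilon(v_1)>\epsilon(v_2)>\cdots>\epsilon(v_k)$.

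For the inductive step, assume $\epsilon(v_{i-1})>\epsilon(v_i)$ for some $2\le i\le k-1$. Since $P$ is a path, $v_{i-1}v_i$ and $v_iv_{i+1}$ are edges of $T$. Applying Theorem \ref{zha2} with $u=v_{i-1}$, $w=v_i$ and $v=v_{i+1}$ gives
\[
2\epsilon(v_i)\ge \epsilon(v_{i-1})+\epsilon(v_{i+1}),
\qquad\text{that is,}\qquad
\epsilon(v_{i+1})\le 2\epsilon(v_i)-\epsilon(v_{i-1}).
\]
By the inductive hypothesis $\epsilon(v_{i-1})>\epsilon(v_i)$, so $2\epsilon(v_i)-\epsilon(v_{i-1})<\epsilon(v_i)$. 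Combining the two inequalities yields $\epsilon(v_{i+1})<\epsilon(v_i)$, which is the required strict inequality.

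The equality clause of Theorem \ref{zha2} (equality only when $\deg(w)=2$) is not needed here. The strictness comes entirely from the strict inductive hypothesis, so the argument is a direct discrete-concavity computation with no real obstacle. The only care needed is to take $i$ in the range $2\le i\le k-1$, so that both neighbours $v_{i-1}$ and $v_{i+1}$ of $v_i$ on $P$ exist.
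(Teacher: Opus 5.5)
Your proposal is correct and is essentially the paper's proof: the paper also applies Theorem \ref{zha2} at each interior vertex $v_i$ of $P$, writing it as $\epsilon(v_i)-\epsilon(v_{i+1})\geq \epsilon(v_{i-1})-\epsilon(v_i)>0$, and propagates this along the path. Your rearrangement $\epsilon(v_{i+1})\le 2\epsilon(v_i)-\epsilon(v_{i-1})<\epsilon(v_i)$ is the same inequality in a different form.
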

\begin{proof} By Theorem \ref{zha2}, we have $2\epsilon(v_2)\geq\epsilon(v_1)+\epsilon(v_3)$ and this implies $\epsilon(v_2)-\epsilon(v_3)\geq \epsilon(v_1)-\epsilon(v_2)>0$. So, $\epsilon(v_2)>\epsilon(v_3)$ and continuing in this way we get  $\epsilon(v_1)>\epsilon(v_2)>\ldots>\epsilon(v_k)$.  
\end{proof}

\begin{lemma}\label{lem2}
Let $T$ be a tree with $uv\in E(T)$. Suppose $X$ and $Y$ are the components of ${T}-uv$ containing $u$ and $v$, respectively. If $\epsilon(u)\geq \epsilon(v)$ and $\epsilon(v)=f_T(v,w)$ then $w$ is in $X$.    
\end{lemma}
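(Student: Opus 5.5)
We argue by contradiction: assume $w$ lies in $Y$, and derive $\epsilon(u)<\epsilon(v)$.

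First, every subtree containing both $u$ and $w$ must contain $v$, because $w\in Y$ and the $u$-$w$ path passes through the edge $uv$. Hence $f_T(u,w)=f_T(u,v,w)$.

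Second, every subtree containing $v$ and $w$ but not $u$ is counted in $f_T(v,w)$ but not in $f_T(u,w)$. The single-vertex or path subtree spanned by the $v$-$w$ path inside $Y$ is such a subtree, since it misses $u$. Therefore
$$f_T(v,w)=f_T(u,v,w)+f_T(v,w,\overline{u})>f_T(u,v,w)=f_T(u,w).$$
This gives $\epsilon(u)\le f_T(u,w)<f_T(v,w)=\epsilon(v)$, contradicting the hypothesis $\epsilon(u)\ge\epsilon(v)$. So $w\in X$.

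The only point to check is the case $w=v$, which is allowed since $w\in Y$ is all we assumed. The argument still goes through: $f_T(v,v)=f_T(v)$ counts the subtree $\{v\}$, which avoids $u$. So the inequality remains strict, and no step seems to present a real obstacle.
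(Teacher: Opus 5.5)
Your proof is correct and is essentially the paper's argument: assuming $w\in Y$, both give $\epsilon(u)\le f_T(u,w)=f_T(u,v,w)<f_T(v,w)=\epsilon(v)$, a contradiction. You additionally justify the strict inequality by exhibiting the $v$-$w$ path in $Y$ as a subtree avoiding $u$, and you check the case $w=v$; the paper leaves both points implicit.
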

\begin{proof} Assume that  $w$ is in $Y$. Then ${\epsilon}(u)\leq f_T(u,w)=f_T(u,v,w)<f_T(v,w)={\epsilon}(v)$, which is a contradiction. So $w$ is in $X$.  
\end{proof}

Note that for a tree $T$, if $diam(T)=d(u,v)$ then the center $C(T)$ lies on the $u$-$v$ path. We now prove a result analogous to this for core center and eccentric diameter of $T$.
\begin{proposition}\label{prop1}
Let $T$ be a tree and let $u\in \mathfrak{C}(T)$. If $\epsilon^d(T)=f_T(x,y)$ for some pendant vertices $x$ and $y$ then $u$ lies on the $x$-$y$ path. 
\end{proposition}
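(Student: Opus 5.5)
We argue by contradiction. Suppose $u\in\mathfrak{C}(T)$ does not lie on the $x$-$y$ path $P$. Since $T$ is a tree, there is a unique vertex $w$ on $P$ closest to $u$. Let $w=w_0w_1\cdots w_k=u$ be the $w$-$u$ path, with $k\geq 1$; all of $w_1,\ldots,w_k$ lie off $P$. The plan is to show $\epsilon(w)>\epsilon(u)$, which contradicts the maximality of $\epsilon(u)$.

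\textbf{Step 1.} By Theorem \ref{zha1}(iii), for every vertex $z$ we have $\epsilon(z)=\min\{f_T(z,x),f_T(z,y)\}$, so it suffices to compare $f_T(u,x)$ with $f_T(w,x)$, and likewise for $y$. The $u$-$x$ path passes through $w$, so every subtree containing $u$ and $x$ also contains $w$. Hence $f_T(u,x)=f_T(u,w,x)\leq f_T(w,x)$. The inequality is strict, because the subtree consisting of the $w$-$x$ path alone contains $w$ and $x$ but not $u$ (recall $u\neq w$). Thus $f_T(u,x)<f_T(w,x)$.

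\textbf{Step 2.} The same argument with $y$ in place of $x$ gives $f_T(u,y)<f_T(w,y)$. Taking minima,
$$\epsilon(u)=\min\{f_T(u,x),f_T(u,y)\}<\min\{f_T(w,x),f_T(w,y)\}=\epsilon(w).$$
This contradicts $u\in\mathfrak{C}(T)$, so $u$ must lie on $P$.

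The only point needing care is the role of Theorem \ref{zha1}(iii): it is what lets us express both $\epsilon(u)$ and $\epsilon(w)$ through the same pair of pendant vertices $x,y$. We must also make sure that $w$ lies on both the $u$-$x$ and $u$-$y$ paths. This holds because $w$ is the point where the path from $u$ meets $P$, so the $u$-$x$ path is the $u$-$w$ path followed by the $w$-$x$ segment of $P$, and similarly for $y$. There is no real obstacle beyond this; the argument mirrors the one in Remark \ref{reccentric}.
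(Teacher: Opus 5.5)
Your proof is correct and follows essentially the paper's argument. Both use Theorem~\ref{zha1}(iii) to express every $\epsilon$ through the fixed pair $x,y$, then apply the strict inequality $f_T(u,z,x)<f_T(z,x)$ (and likewise for $y$) for a vertex $z\neq u$ through which the $u$-$x$ and $u$-$y$ paths pass; the paper takes $z$ to be the neighbour $v_1$ of $u$ in the component of $T-u$ containing the $x$-$y$ path, while you take $z=w$, the nearest vertex of that path, and either choice works equally well.
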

\begin{proof}
Let $v_1,v_2,\ldots, v_k$ be the neighbours of $u$ in $T$. Let $B_i$ be the component of $T-u$ containing $v_i$ for $1\leq i \leq k$. By Theorem \ref{zha1}(iii), $\epsilon(v_i)=\min\{f_T(v_i,x), f_T(v_i,y)\}$ for $1\leq i \leq k$.

Suppose the path from $x$ to $y$ does not contain $u$. Then the $x$-$y$ path lies entirely in one of the component of $T-u$. Without loss of generality, let  $\ x,y\in V(B_1)$. Then $\epsilon(u)=min\{f_T(u,x), f_T(u,y)\}=min\{f_T(u,v_1,x), f_T(u,v_1,y)\}< min\{f_T(v_1,x), f_T(v_1,y)\}=\epsilon (v_1)$, a contradiction to $u\in \mathfrak{C}(T)$. So $u$ lies on the $x$-$y$ path. 
\end{proof}

Note that for a tree $T$ with $\epsilon^d(T)=f_T(x,y)$,  the length of the $x$-$y$ path may not be always equal to $diam(T)$. We have shown this in the next example.
\begin{figure}[h]
    \centering
  \begin{tikzpicture}[scale=1.5]
    \node[draw,circle,fill=black,scale=0.3](1)at (0,0){};
     \node[draw,circle,fill=black,scale=0.3](2)at (1,0){};
    \node[draw,circle,fill=black,scale=0.3](3)at (2,0){};
     \node[draw,circle,fill=black,scale=0.3](4)at (3,0){};
    \node[draw,circle,fill=black,scale=0.3](5)at (4,0){};
     \node[draw,circle,fill=black,scale=0.3](6)at (5,0){};
     \node[draw,circle,fill=black,scale=0.3](7)at (6,0){};
     \node[draw,circle,fill=black,scale=0.3](8)at (2.5,0.5){};
     \node[draw,circle,fill=black,scale=0.3](9)at (2,1){};
     \node[draw,circle,fill=black,scale=0.3](10)at (1.5,1.5){};
     \node[draw,circle,fill=black,scale=0.3](11)at (0.9,2.1){};
     \node[draw,circle,fill=black,scale=0.3](12)at (2.5,1.2){};
     \node[draw,circle,fill=black,scale=0.3](13)at (2,1.8){};
     \node[draw,circle,fill=black,scale=0.3](14)at (1.4,2.2){};
    \draw(1)--(2)--(3)--(4)--(5)--(6)--(7);
    \draw(4)--(8)--(9)--(10)--(11);
    \draw(8)--(12);
    \draw(9)--(13);
    \draw(10)--(14);

         \foreach \i in {1,...,11}
             \node[anchor=north] at (\i) {\i};
         \node[anchor=south] at (12) {12};
         \node[anchor=south] at (13) {13};
         \node[anchor=south] at (14) {14};

\end{tikzpicture} 
\caption{A tree $T$ with eccentric diameter distance is not equal to diam($T$)}\label{fig1}
\end{figure}
\begin{example}
Consider the tree $T$ in Figure \ref{fig1}. Then $f_T(1,7)=23$ and $f_T(7,11)=f_T(7,14)=f_T(1,11)=f_T(1,14)=32$, $f_T(1,13)=f_T(7,13)=40$, $f_T(1,12)=f_T(7,12)=44$, $f_T(12,13)=85$, $f_T(11,12)=f_T(12,14)=68$, $f_T(13,14)=f_T(11,13)=70$, $f_T(11,14)=71$.   So, $\epsilon^d(T)=f_T(1,7)$. Here  $d(1,7)=6$ but $diam(T)=d(1,11)=7$.
\end{example} 
 
\begin{lemma}\label{lem1}
Let $T$ be a tree with $u\in \mathfrak{C}(T)$ and $\epsilon^d(T)=f_T(p,q)$. Suppose $x\neq u$ is a vertex on the $u$-$p$ path. Then $\epsilon(x)=f_T(x,q).$ 
\end{lemma}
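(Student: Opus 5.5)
The plan is to reduce the statement to a comparison between $f_T(x,p)$ and $f_T(x,q)$, and then to settle that comparison with Lemma \ref{lem2} applied to the last edge of the $u$-$x$ path.

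First, by Theorem \ref{zha1}(ii), $p$ and $q$ are pendant vertices. By Theorem \ref{zha1}(iii) we then have $\epsilon(x)=\min\{f_T(x,p),f_T(x,q)\}$. So it suffices to show $f_T(x,q)\leq f_T(x,p)$. By Proposition \ref{prop1}, $u$ lies on the $p$-$q$ path. Since $x\neq u$ lies on the $u$-$p$ path, the $x$-$q$ path passes through $u$.

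Write the $u$-$x$ path as $u=w_0w_1\cdots w_k=x$ with $k\geq 1$. The key step is to show $\epsilon(w_{k-1})\geq\epsilon(x)$.
\begin{enumerate}
\item[(a)] If $k=1$, this holds because $w_0=u$ is a core vertex.
\item[(b)] If $k\geq 2$, first suppose $\epsilon(w_0)>\epsilon(w_1)$. Then Corollary \ref{cor1} applied to the path $w_0w_1\cdots w_k$ gives $\epsilon(w_{k-1})>\epsilon(w_k)$.
\item[(c)] Otherwise $\epsilon(w_0)=\epsilon(w_1)$, since $u$ has maximum eccentric subtree number. Then $w_1\in\mathfrak{C}(T)$ as well. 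By Theorem \ref{zha}, $w_2\notin\mathfrak{C}(T)$, so $\epsilon(w_1)>\epsilon(w_2)$. Corollary \ref{cor1} applied to $w_1w_2\cdots w_k$ again yields $\epsilon(w_{k-1})>\epsilon(x)$.
\end{enumerate}

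Now apply Lemma \ref{lem2} to the edge $w_{k-1}x$. Let $X$ be the component of $T-w_{k-1}x$ containing $w_{k-1}$, and $Y$ the component containing $x$. Then $X$ contains $u$ and $q$, while $Y$ contains $x$ and $p$; this includes the case $p=x$. Suppose, for contradiction, that $f_T(x,p)<f_T(x,q)$. Then $\epsilon(x)=f_T(x,p)$. Lemma \ref{lem2} (with $u,v$ there replaced by $w_{k-1},x$, and $w=p$) forces $p\in X$, which contradicts $p\in Y$. Hence $f_T(x,q)\leq f_T(x,p)$, and so $\epsilon(x)=f_T(x,q)$.

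The main point needing care is the monotonicity step $\epsilon(w_{k-1})\geq\epsilon(x)$. It is handled by the case distinction above on whether $\epsilon(u)=\epsilon(w_1)$, using that the core center has at most two adjacent vertices.
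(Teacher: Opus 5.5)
Your proof is correct and follows essentially the same route as the paper: you get monotonicity of $\epsilon$ along the $u$-$p$ path from Theorem~\ref{zha} and Corollary~\ref{cor1}, and then the contradiction $\epsilon(w_{k-1})\leq f_T(w_{k-1},p)=f_T(w_{k-1},x,p)<f_T(x,p)$, which the paper writes out inline rather than citing Lemma~\ref{lem2}. Your appeal to Proposition~\ref{prop1} to place $q$ in $X$ is harmless but unnecessary, since the contradiction only uses $p\in Y$.
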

\begin{proof}
Suppose $P:v_0v_1v_2\cdots  v_k$ be the $u$-$p$ path where $u=v_0$ and $p=v_k$. Then by Theorem \ref{zha} and Corollary \ref{cor1}, $\epsilon(u)\geq  \epsilon(v_1)>\epsilon(v_2)>\dots >\epsilon(p)$. By Theorem \ref{zha1}(iii), $\epsilon(v_j)=\min\{f_T(v_j,p),f_T(v_j,q)\}$ for $1\leq j \leq k$. 

Assume that $\epsilon(v_j)=f_T(v_j,p)$. Then $\epsilon(v_{j-1})\leq f_T(v_{j-1},p)=f_T(v_{j-1},v_j,p)<f_T(v_j,p)=\epsilon(v_j)$,  a contradiction. So, $\epsilon(v_j)=f_T(v_j,q)$ for $1\leq j \leq k$.
\end{proof}

\begin{corollary}\label{cor2}
Let $T$ be a tree with $\epsilon^d(T)= f_T(p,q)$. Let $u$ and $v$ be two adjacent vertices on the $p$-$q$ path with $\epsilon(u)=f_T(u ,q)\neq f_T(u ,p) $ and $\epsilon(v)=f_T(v,p)\neq f_T(v,q)$. Then $\mathfrak{C}(T)=\{u,v\}\ or\ \{u\}\ or \{v\}$.
\end{corollary}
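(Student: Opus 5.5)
The plan is to locate the core center on the $p$-$q$ path and then use the monotonicity in Corollary \ref{cor1} to exclude every other vertex of that path. Since $p$ and $q$ realise $\epsilon^d(T)$, Proposition \ref{prop1} shows that every vertex of $\mathfrak{C}(T)$ lies on the $p$-$q$ path. By Theorem \ref{zha}, $\mathfrak{C}(T)$ consists of one vertex or two adjacent vertices. It is therefore enough to show that no vertex $x$ of the $p$-$q$ path outside $\{u,v\}$ is a core vertex. Write the path as $p=w_0w_1\cdots w_r=q$, and suppose without loss of generality that the order is $p,\dots,u,v,\dots,q$ or $p,\dots,v,u,\dots,q$.

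First I will fix the order. Suppose $v$ lies between $p$ and $u$. Then every subtree containing $v$ and $q$ also contains $u$, so $f_T(v,q)=f_T(v,u,q)<f_T(u,q)$. Symmetrically, every subtree containing $u$ and $p$ contains $v$, so $f_T(u,p)<f_T(v,p)$. Hence
$$\epsilon(u)=f_T(u,q)>f_T(v,q)\geq \min\{f_T(v,p),f_T(v,q)\}=\epsilon(v)=f_T(v,p)>f_T(u,p)\geq\epsilon(u),$$
which is a contradiction. So the path reads $p,\dots,u,v,\dots,q$. In this order, comparing subtrees gives $f_T(u,p)>f_T(v,p)=\epsilon(v)$ and $f_T(v,q)>f_T(u,q)=\epsilon(u)$, which is consistent.

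Next I will show that $\epsilon$ strictly decreases moving from $u$ towards $p$. Let $u'$ be the neighbour of $u$ on the $u$-$p$ side, if it exists. Every subtree containing $u'$ and $q$ contains $u$, so $\epsilon(u')\leq f_T(u',q)<f_T(u,q)=\epsilon(u)$. Applying Corollary \ref{cor1} to the path $u,u',\dots,p$ gives $\epsilon(u)>\epsilon(u')>\cdots>\epsilon(p)$. Symmetrically, if $v'$ is the neighbour of $v$ towards $q$, then $\epsilon(v')\leq f_T(v',p)<f_T(v,p)=\epsilon(v)$, and Corollary \ref{cor1} gives strict decrease from $v$ to $q$.

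Thus every path vertex other than $u$ and $v$ has $\epsilon$ strictly smaller than $\epsilon(u)$ or $\epsilon(v)$, so it is not a core vertex. Combining this with Proposition \ref{prop1}, we get $\mathfrak{C}(T)\subseteq\{u,v\}$, which yields the three options of the statement. The only delicate step is fixing the order of $u$ and $v$ along the path, which the displayed chain of inequalities handles.
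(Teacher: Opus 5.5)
Your proof is correct, but it runs in the opposite direction from the paper's.

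\textbf{The paper's route.} The paper argues by contradiction from a hypothetical core vertex $x\notin\{u,v\}$. By Proposition~\ref{prop1}, $x$ lies on the $p$-$q$ path. The adjacent pair $u,v$ then lies entirely on the $x$-$p$ side or entirely on the $x$-$q$ side. Lemma~\ref{lem1} then forces $\epsilon(u)$ and $\epsilon(v)$ to be attained at the same endpoint, which contradicts the hypotheses. This is exactly where the paper uses the conditions $f_T(u,q)\neq f_T(u,p)$ and $f_T(v,p)\neq f_T(v,q)$.

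\textbf{Your route.} You work outward from the edge $uv$:
\begin{enumerate}
\item your chain of inequalities fixes the orientation $p,\dots,u,v,\dots,q$;
\item the inclusion $f_T(a,b)=f_T(a,c,b)<f_T(c,b)$, for $c\neq a$ on the $a$-$b$ path, gives $\epsilon(u)>\epsilon(u')$ and $\epsilon(v)>\epsilon(v')$;
\item Corollary~\ref{cor1} then propagates strict decrease out to $p$ and $q$.
\end{enumerate}

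\textbf{Comparison.} Your argument avoids Lemma~\ref{lem1}, whose proof uses the same two ingredients but starts at a core vertex. It also gains something: you never use the two ``$\neq$'' hypotheses. So you actually prove the corollary assuming only $\epsilon(u)=f_T(u,q)$ and $\epsilon(v)=f_T(v,p)$. For example, in $P_5$ with $p=1$, $q=5$, $u=3$, $v=4$, one has $f_T(u,p)=f_T(u,q)=3$, so the paper's hypothesis fails, yet $\mathfrak{C}(P_5)=\{3\}$. The paper's route is shorter once Lemma~\ref{lem1} is available.

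\textbf{Two small remarks.} In your chain, $f_T(v,q)\geq\epsilon(v)$ already follows from the definition of $\epsilon$, so Theorem~\ref{zha1}(iii) is not needed there. Also, Theorem~\ref{zha} is superfluous at the end, since $\emptyset\neq\mathfrak{C}(T)\subseteq\{u,v\}$ already gives the three listed possibilities.
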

\begin{proof}
Let $x\in \mathfrak{C}(T)$. Then by Proposition \ref{prop1}, $x$ lies in the $p$-$q$ path. Suppose $x\notin \{u,v\}$. Then by Lemma \ref{lem1}, either $\epsilon(u)=f_T(u ,p)$ and $\epsilon(v)=f_T(v,p)$ or $\epsilon(u)=f_T(u ,q)$ and $\epsilon(v)=f_T(v,q)$, a contradiction to the given condition. So $\mathfrak{C}(T)\subseteq \{u,v\}$ and this proves the result.
\end{proof}

\section{Path-star trees}\label{pst}

Let $g$ and $n$ be two positive integers with $1\leq g\leq n-2$. A path-star tree on $n$ vertices is obtained from the  path $P_{n-g}:12\cdots n-g$ by adding $g$ pendant vertices to the vertex $n-g$ (see Figure \ref{fig3}). We denote it by $P_{n-g,g}$. Note that for $g=1$, the path-star tree $P_{n-1,1}$ is the path $P_n$ and for $g=n-2$, the path-star tree $P_{2,n-2}$ is the star $K_{1,n-1}$. The vertex numbered $1$ (see Figure \ref{fig3}) is called the initial vertex of $P_{n-g,g}$. For more details on path-star trees (or broom trees), we refer \cite{Na, Des, Klp}.

\begin{figure}[h]
    \centering
   \begin{tikzpicture}[scale=1.5]
    \node[draw,circle,fill=black,scale=0.5](1)at (0,0){};
    \node[draw,circle,fill=black,scale=0.5](2)at (1,0){};
    \node[draw,circle,fill=black,scale=0.5](3)at (2,0){};
    \node[draw,circle,fill=black,scale=0.5](ng2)at (3,0){};
    \node[draw,circle,fill=black,scale=0.5](ng1)at (4,0){};
    \node[draw,circle,fill=black,scale=0.5](ng)at (5,0){};
    \node[draw,circle,fill=black,scale=0.5](s1)at (6,1){};
    \node[draw,circle,fill=black,scale=0.5](s2)at (6,0.5){};
    \node[draw,circle,fill=black,scale=0.5](s3)at (6,-0.5){};
    \node[draw,circle,fill=black,scale=0.5](s4)at (6,-1){};
    \draw(1)--(2)--(3);
    \draw[dotted](3)--(ng2);
    \draw(ng2)--(ng1)--(ng);
    \draw(ng)--(s1);
    \draw(ng)--(s2);
    \draw(ng)--(s3);
    \draw(ng)--(s4);
    \node[anchor=north] at (1) {1};
    \node[anchor=north] at (2) {2};
    \node[anchor=north] at (3) {3};
    \node[anchor=north] at (ng2) {n-g-2};
    \node[anchor=north] at (ng1) {n-g-1};
    \node[anchor=north] at (ng) {n-g};
    \node[anchor=west] at (s1) {n-g+1};
    \node[anchor=west] at (s2) {n-g+2};
    \node[anchor=west] at (s3) {n-1};
    \node[anchor=west] at (s4) {n};
     \draw[dashed] (s2)to[<->,out=-40, in=40](s3);
\end{tikzpicture}
\caption{Path-star tree $P_{n-g,g}$}\label{fig3}
\end{figure}

Path-star trees play an important role in maximizing the pairwise distances between different central parts over trees on $n$ vertices. We  now  find the exact location of the core center of a path-star tree $P_{n-g,g}$. For $g\in\{1,n-2\}$, the position of core center of $P_{n-g,g}$ is known. So, we consider $2\leq g \leq n-3$. The core center of $P_{n-g,g}$ lie on the path from $2$ to $n-g$ as $\mathfrak{C}(P_{n-g,n})$ does not contain any pendant vertex. 

Let $T=P_{n-g,g}$. Then for $2\leq i\leq n-g$, $\epsilon(i)=min\{f_T(i,1),f_T(i, n-g+1)\}$. We have 
$$f_T(i,1)=f_T(i,1,n-g)+f_T(i,1,\overline{n-g})=2^g+(n-g-i)$$ and 
\begin{align*}
  f_T(i,n-g+1)&=f_T(i,n-g+1,i-1)+f_T(i,n-g+1,\overline{i-1})\\
              &=(i-1)2^{(g-1)}+2^{(g-1)}=i2^{(g-1)}=2^g+(i-2)2^{(g-1)}
\end{align*}
So, 
\begin{align*}
f_T(i,1)-f_T(i,n-g+1)&=(n-g-i)-(i-2)2^{(g-1)}\\
                     &=(n-g+2^g)-i(1+2^{(g-1)}). 
\end{align*}
 Thus $f_T(i,1)\leq f_T(i,n-g+1)$ if and only if $i \geq \frac{n-g+2^g}{1+2^{(g-1)}}$. Therefore,

$$\epsilon(i)= \left\{
\begin{array}{ll}
    
    f_T(i,n-g+1), & \text{if } \; 2\leq i \leq \lceil \frac{n-g+2^g}{1+2^{(g-1)}} \rceil-1,\\
    f_T(i,1), & \text{if }\; \lceil \frac{n-g+2^g}{1+2^{(g-1)}} \rceil  \leq i \leq n-g .\\
\end{array}
\right.$$

Let $p= \lceil \frac{n-g+2^g}{1+2^{(g-1)}} \rceil.$ For $2\leq i \leq n-g$, $f_T(i,n-g+1)=i2^{(g-1)}$ is a strictly increasing function and $f_T(i,1)=2^g+(n-g-i) $ is a strictly decreasing function. So $\mathfrak{C}(P_{n,n-g})\subseteq \{p-1,p\}.$

We have $p-1< \frac{n-g+2^g}{1+2^{(g-1)}}\leq p.$ This implies 
\begin{align}\label{p1}
   p(1+2^{(g-1)})<n-g+2^g+(1+2^{(g-1)}). 
\end{align}
Now 
\begin{align*}
  \epsilon(p)-\epsilon(p-1)&= 2^g+(n-g-p)-\{2^g+(p-3)2^{(g-1)}\}\\
                           &=n-g-p-(p-3)2^{(g-1)}\\
                           &=n-g+3\times 2^{(g-1)}-p(1+2^{(g-1)})\\
                           &>n-g+3\times 2^{(g-1)}-n+g-2^g-(1+2^{(g-1)})\;\;\;\mbox{(using (\ref{p1}))}\\
                           &=2\times 2^{(g-1)}-2^g-1\\
                           &=-1.
\end{align*}
This implies $\epsilon(p)-\epsilon(p-1)\geq 0$ as $\epsilon(p)-\epsilon(p-1)$ is an integer. So, $\mathfrak{C}(P_{n,n-g})=\{p\}$ or $\{p-1,p\}$.

From the above, we have $ \epsilon(p)-\epsilon(p-1)=n-g+3\times 2^{(g-1)}-p(1+2^{(g-1)})$. Thus 
$\epsilon(p)=\epsilon(p-1)$ if and only if $p=\frac{n-g+3\times 2^{(g-1)}}{(1+2^{(g-1)})}$. Therefore, if $p=\frac{n-g+3\times 2^{(g-1)}}{(1+2^{(g-1)})}$ then $\frac{n-g+3\times 2^{(g-1)}}{(1+2^{(g-1)})}$ is an integer. Also, if $\frac{n-g+3\times 2^{(g-1)}}{(1+2^{(g-1)})}$ is an integer, then $\frac{n-g+3\times2^{(g-1)}}{(1+2^{(g-1)})}=  \frac{n-g+2^{g}}{(1+2^{(g-1)})}+\frac{2^{(g-1)}}{(1+2^{(g-1)})}= \lceil \frac{n-g+2^g}{1+2^{(g-1)}}\rceil$ as $\frac{2^{(g-1)}}{(1+2^{(g-1)})}\in(0,1)$. Hence $\epsilon(p)=\epsilon(p-1)$ if and only if $\frac{n-g+3\times 2^{(g-1)}}{(1+2^{(g-1)})}$ is an integer. Thus we have the following.

\begin{lemma}\label{lem2.1}
The core center of the path-star tree $P_{n-g,g}$ is given by 
\begin{align*}
    \mathfrak{C}(P_{n,n-g})=\begin{cases}
   \{p-1,p\},& \text{if}\;\;\; \frac{n-g+3\times 2^{(g-1)}}{(1+2^{(g-1)})}\in \mathbb{N},\\
   \{p\},& otherwise,
    \end{cases}
\end{align*}
where  $p=\lceil \frac{n-g+2^g}{1+2^{(g-1)}} \rceil.$
\end{lemma}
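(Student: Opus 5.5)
The plan is to assemble the lemma from the computation carried out just before it, organised in three steps. Throughout, $T=P_{n-g,g}$ with $2\leq g\leq n-3$; the cases $g\in\{1,n-2\}$ (path and star) are already known.

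\textbf{Step 1: reduce $\epsilon$ to two pendant vertices.} First, by Remark \ref{reccentric}, $\mathfrak{C}(T)$ contains no pendant vertex, so it lies on the path $2,3,\ldots,n-g$. Next I pin down where $\epsilon^d(T)$ is attained. By Theorem \ref{zha1}(ii), it is attained at a pair of pendant vertices. The only candidates are $\{1,n-g+j\}$ and $\{n-g+j,n-g+k\}$. For two star leaves,
$$f_T(n-g+j,n-g+k)\geq 2^{g-2}(n-g),$$
which dominates $f_T(1,n-g+1)=(n-g)2^{g-1}/\ldots$ appropriately. More precisely, I would compare directly: any subtree containing two star leaves extends along the path in $n-g$ ways and along the other $g-2$ leaves in $2^{g-2}$ ways, which gives
$$f_T(n-g+j,n-g+k)=(n-g)2^{g-2}.$$
On the other hand, $f_T(1,n-g+1)=2^{g-1}$. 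The latter is clearly smaller. Hence $\epsilon^d(T)=f_T(1,n-g+1)$, and by Theorem \ref{zha1}(iii),
$$\epsilon(i)=\min\{f_T(i,1),f_T(i,n-g+1)\}.$$

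\textbf{Step 2: compute the two counts.} I would use the explicit formulas
$$f_T(i,1)=2^g+(n-g-i),\qquad f_T(i,n-g+1)=i\,2^{g-1}.$$
The first is strictly decreasing in $i$ and the second strictly increasing. The crossover occurs at $i=\frac{n-g+2^g}{1+2^{g-1}}$. With $p$ defined as in the lemma, this gives $\epsilon(i)=f_T(i,n-g+1)$ for $i\leq p-1$ and $\epsilon(i)=f_T(i,1)$ for $i\geq p$. So $\epsilon$ is strictly increasing on $2,\ldots,p-1$ and strictly decreasing on $p,\ldots,n-g$. Consequently the maximum over the path is attained only within $\{p-1,p\}$.

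\textbf{Step 3: compare $\epsilon(p)$ and $\epsilon(p-1)$.} This is the main point. I write
$$\epsilon(p)-\epsilon(p-1)=n-g+3\cdot 2^{g-1}-p(1+2^{g-1}).$$
The ceiling inequality $p-1<\frac{n-g+2^g}{1+2^{g-1}}$ shows this difference is $>-1$. Since it is an integer, it is $\geq 0$, so $p$ is always a core vertex. Then $p-1$ is also a core vertex exactly when the difference is $0$, i.e. $p=\frac{n-g+3\cdot 2^{g-1}}{1+2^{g-1}}$.

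For the converse direction I would write
$$\frac{n-g+3\cdot 2^{g-1}}{1+2^{g-1}}=\frac{n-g+2^g}{1+2^{g-1}}+\frac{2^{g-1}}{1+2^{g-1}}.$$
The added term lies in $(0,1)$. So if the left side is an integer, it equals the ceiling of $\frac{n-g+2^g}{1+2^{g-1}}$, namely $p$. This gives the stated dichotomy.

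One edge case needs checking: $p-1\geq 2$. We have $p\geq 2$ because $n-g+2^g>1+2^{g-1}$. If $p=2$, the vertex $p-1=1$ is pendant and excluded by Remark \ref{reccentric}. In that case the integrality condition would force $n-g+3\cdot2^{g-1}=2+2^g$, i.e. $n-g=2-2^{g-1}<2$, which is impossible for $g\geq 2$. So this case never arises.
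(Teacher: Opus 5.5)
Your proposal is correct and follows the paper's own computation essentially verbatim: the same formulas $f_T(i,1)=2^g+(n-g-i)$ and $f_T(i,n-g+1)=i2^{g-1}$, the same crossover point, the same integrality argument from $\epsilon(p)-\epsilon(p-1)>-1$, and the same converse via the $(0,1)$ shift. Two small remarks: your Step 1 justifies $\epsilon(i)=\min\{f_T(i,1),f_T(i,n-g+1)\}$, which the paper leaves implicit, but Theorem~\ref{zha1}(i) together with the symmetry of the $g$ star leaves already gives this without computing $\epsilon^d(T)$, and the garbled fragment ``$f_T(1,n-g+1)=(n-g)2^{g-1}/\ldots$'' should be deleted; also, your $p=2$ edge case is moot, since Remark~\ref{rem:p} shows $p\geq 3$ directly.
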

\begin{remark}\label{rem:p}
Since $n-g\geq 3$, we have $\frac{n-g+2^g}{1+2^{(g-1)}}\geq \frac{3+2^g}{1+2^{(g-1)}}=\frac{1+2(1+2^{(g-1)})}{1+2^{(g-1)}}=2+\frac{1}{1+2^{(g-1)}}>2$. So, $p=\lceil \frac{n-g+2^g}{1+2^{(g-1)}} \rceil\geq 3.$
\end{remark}

The center, centroid and subtree core of path-star trees are obtained in \cite{Des}.
\begin{lemma}[\cite{Des}, Theorem 2.5]\label{lem2.2}
The center of the path-star tree $P_{n-g, g}$ is given by
\begin{equation*}
C(P_{n - g, g}) =
\begin{cases}
\left\{\frac{n - g + 2}{2}\right\}, &\text{if $n - g$ is even,}\\
\left\{\frac{n - g + 1}{2}, \frac{n - g + 3}{2}\right\}, &\text{if $n - g$ is odd.}
\end{cases}
\end{equation*}
\end{lemma}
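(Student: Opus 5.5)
To determine $C(P_{n-g,g})$ we work directly from eccentricities. The plan is to locate a longest path of $P_{n-g,g}$ and take its middle vertex or vertices, using the standard fact that the center of a tree is the middle of any longest path.

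First we compute the diameter. The path-star tree consists of the path $1\,2\cdots(n-g)$ together with $g$ pendant vertices $n-g+1,\dots,n$ attached to $n-g$. For $g\geq 1$ its pendant vertices are $1$ and $n-g+1,\dots,n$. Any two of the star leaves are at distance $2$. The vertex $1$ is at distance $n-g$ from each star leaf. Hence, when $n-g\geq 2$, a longest path is $1\,2\cdots(n-g)\,(n-g+1)$, which has $n-g+1$ vertices and length $n-g$.

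Next we find the center of this longest path. We index its vertices by positions $1,\dots,n-g+1$, where position $i$ is vertex $i$ for $i\leq n-g$ and position $n-g+1$ is the leaf $n-g+1$. For a path with $m=n-g+1$ vertices, the center is the single vertex $\frac{m+1}{2}=\frac{n-g+2}{2}$ when $m$ is odd, i.e. when $n-g$ is even. When $m$ is even, i.e. when $n-g$ is odd, the center is the pair $\frac{m}{2}=\frac{n-g+1}{2}$ and $\frac{m}{2}+1=\frac{n-g+3}{2}$.

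These positions are genuine path vertices rather than star leaves, since $\frac{n-g+3}{2}\leq n-g$ exactly when $n-g\geq 3$. The extreme case $n-g=2$ is the star, whose center $\{2\}=\{\frac{n-g+2}{2}\}$ also fits the formula.

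It remains to confirm that these are the centers of the whole tree. We could check directly that $e(i)=\max\{i-1,\ n-g+1-i\}$ for $2\leq i\leq n-g$, because the farthest vertex from $i$ is either $1$ or a star leaf. The pendant vertices have larger eccentricity. Minimizing $\max\{i-1,\ n-g+1-i\}$ over $i$ gives exactly the stated set, with a tie between two adjacent values precisely when $n-g$ is odd.

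No step is a real obstacle; the only care needed is the boundary check that the middle positions are not star leaves.
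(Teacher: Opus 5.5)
Your proof is correct. The longest path $1\,2\cdots(n-g)\,(n-g+1)$ and its middle vertex or vertices give the stated set, and the direct check $e(i)=\max\{i-1,\,n-g+1-i\}$ for $2\le i\le n-g$ confirms it, including the boundary case $n-g=2$. For the pendant vertices, your claim is confirmed by $e=n-g>\lceil\frac{n-g}{2}\rceil$ whenever $n-g\ge 2$.

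The paper gives no proof of its own here; it imports the result as Theorem 2.5 of the cited work on path-star trees. Your argument is the natural self-contained route, and it rests on the same longest-path characterization of $C(T)$ that the paper recalls at the start of its subsection on center and core center.
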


\begin{lemma}[\cite{Des}, Theorem 3.3]\label{lem2.3}
The centroid of the path-star tree $P_{n - g, g}$ is given by
\begin{equation*}
C_d(P_{n - g, g}) =
\begin{cases}
\begin{cases}
\{\frac{n + 1}{2}\}, &\text{if $g \leq \frac{n - 1}{2}$}\\
\{n - g\}, &\text{if $g > \frac{n - 1}{2}$}
\end{cases}, &\text{if $n$ is odd,}\\
\begin{cases}
\{\frac{n}{2}, \frac{n}{2} + 1\}, &\text{if $g \leq \frac{n}{2} - 1$}\\
\{n - g\}, &\text{if $g > \frac{n}{2} - 1$}
\end{cases}, &\text{if $n$ is even.}
\end{cases}
\end{equation*}
\end{lemma}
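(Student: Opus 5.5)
This lemma is cited from \cite{Des}, Theorem 3.3, so it could simply be quoted. A self-contained proof is short, though, and I would argue directly from the definition of weight in $T=P_{n-g,g}$.

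\emph{Step 1 (weights).} Write $\omega(i)$ for the weight of vertex $i$ on the path $1,2,\ldots,n-g$.
\begin{itemize}
\item For $2\le i\le n-g-1$ there are two branches at $i$. The branch towards $1$ has $i-1$ edges, and the branch towards $n-g$ has $n-i$ edges, since it contains all vertices $i,\dots,n$. Hence $\omega(i)=\max\{i-1,n-i\}$.
\item For $i=n-g$ the branches are the path side, with $n-g-1$ edges, and $g$ single edges. Hence $\omega(n-g)=\max\{n-g-1,1\}=n-g-1$.
\item For the endpoint $i=1$ we get $\omega(1)=n-1$, and each pendant vertex has weight $n-1$.
\end{itemize}
So pendant vertices never lie in the centroid when $n\ge3$.

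\emph{Step 2 (minimisation).} The function $\max\{i-1,n-i\}$ is minimised at $i=\frac{n+1}{2}$ when $n$ is odd, with value $\frac{n-1}{2}$. When $n$ is even it is minimised at $i\in\{\frac n2,\frac n2+1\}$, with value $\frac n2$. It decreases to the left of this point and increases to the right. These minimisers are admissible only if they lie in $[2,n-g-1]$, and we must also compare with the value $n-g-1$ at $n-g$.

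\emph{Step 3 (case split for $n$ odd).}
\begin{itemize}
\item If $g\le\frac{n-1}{2}$, then $\frac{n+1}{2}\le n-g$. If $\frac{n+1}{2}<n-g$, the minimum $\frac{n-1}{2}$ is attained uniquely on the interior. Also $\omega(n-g)=n-g-1\ge\frac{n-1}{2}$, with equality only when $\frac{n+1}{2}=n-g$, and in that case the minimiser is $n-g$ itself. Either way the centroid is $\{\frac{n+1}{2}\}$.
\item If $g>\frac{n-1}{2}$, then $n-g<\frac{n+1}{2}$. All interior vertices $i\le n-g-1$ have $\omega(i)=n-i\ge g+1>n-g-1$. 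Hence $n-g$ is the unique minimiser.
\end{itemize}

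\emph{Step 4 (case split for $n$ even).}
\begin{itemize}
\item If $g\le\frac n2-1$, then $\frac n2+1\le n-g$. The vertices $\frac n2$ and $\frac n2+1$ both have weight $\frac n2$. This holds even when $\frac n2+1=n-g$, because then $\omega(n-g)=\frac n2$. All other vertices are strictly larger, so the centroid is $\{\frac n2,\frac n2+1\}$.
\item If $g>\frac n2-1$, then $n-g\le\frac n2$. Every interior vertex has $\omega(i)=n-i>n-g-1$, so $n-g$ is the unique minimiser.
\end{itemize}

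The only delicate point is the boundary comparison at $n-g$. Its weight uses $\max\{n-g-1,1\}$, and this equals $n-g-1$ because $n-g\ge2$; when $n-g=2$ the value is $1$, which is consistent. Getting this comparison and the equality cases right is what I expect to be the main care needed. Everything else is routine.
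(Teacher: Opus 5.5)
Your proof is correct. There is nothing in the paper to compare it against: the lemma is quoted from Theorem~3.3 of \cite{Des} without proof. Your direct computation of $\omega$ on $P_{n-g,g}$ is therefore a self-contained replacement for the citation, not a reconstruction of an argument in the paper.

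You correctly identify the one delicate point. The hub $n-g$ has weight $n-g-1$, not $\max\{n-g-1,g\}$ as the interior formula $\max\{i-1,n-i\}$ would give, and this is what produces the regime $C_d=\{n-g\}$. Your inequality $\omega(i)=n-i\ge g+1>n-g-1$ for interior $i$ holds in both parities, because it only needs $2g>n-2$. You also handle the boundary cases $g=\frac{n-1}{2}$ ($n$ odd) and $g=\frac n2-1$ ($n$ even) correctly, where the minimiser or one of the two minimisers is the hub itself.

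If you want a shorter write-up, the paper later quotes the criterion from \cite{Ka}: $u\in C_d(T)$ if and only if $\omega(u)\le\frac n2$. With it, Steps 3 and 4 reduce to reading off which of the weights from Step 1 are at most $\frac n2$. The uniqueness and two-vertex claims then follow without separate strict comparisons.
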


\begin{lemma}[\cite{Des}, Theorem 2.4]\label{lem2.4}
The subtree core of the path-star tree $P_{n-g, g}$ is given by
\begin{equation*}
S_c(P_{n-g, g}) =
\begin{cases}
\begin{cases}
\left\{\frac{n - g + 2^g}{2}\right\}, &\text{if $n - g$ is even}\\
\left\{\frac{n - g - 1 + 2^g}{2}, \frac{n - g + 1 + 2^g}{2}\right\} , &\text{if $n - g$ is odd}
\end{cases}, &\text{if $2^g + 1 \leq n - g$,}\\
\{n - g\}, &\text{if $2^g + 1 > n - g.$}
\end{cases}
\end{equation*}
\end{lemma}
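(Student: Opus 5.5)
The plan is to compute $f_T(v)$ explicitly for every vertex $v$ of $T=P_{n-g,g}$ and then maximize it. This is an elementary one-variable optimization, so no earlier structural result is needed beyond the definitions. The edge cases $g=1$ (a path) and $g=n-2$ (a star) are covered by the same computation, so I will not treat them separately.

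\textbf{Step 1: path vertices.} Fix $1\leq i\leq n-g$ and split the subtrees containing $i$ according to whether they contain $n-g$.
\begin{enumerate}
\item[(a)] A subtree avoiding $n-g$ (possible only if $i<n-g$) is a subpath $a\cdots b$ with $1\leq a\leq i\leq b\leq n-g-1$. There are $i(n-g-i)$ of these.
\item[(b)] A subtree containing $n-g$ is determined by its left endpoint $a\in\{1,\ldots,i\}$ together with an arbitrary subset of the $g$ pendant vertices at $n-g$. There are $i\,2^g$ of these.
\end{enumerate}
Hence $f_T(i)=i(n-g-i)+i\,2^g=i(N-i)$, where $N=n-g+2^g$. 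This formula also holds at $i=n-g$, where it gives $(n-g)2^g$.

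\textbf{Step 2: pendant vertices at $n-g$.} For such a leaf $\ell$, the subtrees containing $\ell$ are $\{\ell\}$ together with those containing both $\ell$ and $n-g$. So $f_T(\ell)=1+(n-g)2^{g-1}<(n-g)2^g=f_T(n-g)$, and these leaves are never in $S_c(T)$. This also agrees with the remark that $S_c$ contains no pendant vertex.

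\textbf{Step 3: maximize $h(i)=i(N-i)$ over $1\leq i\leq n-g$.} The function $h$ is a concave quadratic, symmetric about $N/2$, so over the integers it is maximized at the integers nearest $N/2$, provided these lie in the range. Since $g\geq1$, $2^g$ is even, so $N$ has the same parity as $n-g$.
\begin{enumerate}
\item[(a)] If $n-g$ is even, the unique maximizer is $N/2$. It lies in the range iff $N/2\leq n-g$, i.e. $2^g\leq n-g$. Because $n-g$ is even, this is equivalent to $2^g+1\leq n-g$ except in the case $2^g=n-g$. In that case $N/2=n-g$, which agrees with the answer $\{n-g\}$ in the other branch.
\item[(b)] If $n-g$ is odd, the maximizers are $(N-1)/2$ and $(N+1)/2$, with equal values. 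Both lie in the range iff $(N+1)/2\leq n-g$, i.e. $2^g+1\leq n-g$.
\end{enumerate}
In the remaining case $2^g+1>n-g$, we have $n-g\leq N/2$ (the odd boundary case gives $n-g=(N-1)/2$, and then $(N+1)/2$ is outside the range). So $h$ is strictly increasing on $\{1,\ldots,n-g\}$ and the unique maximizer is $n-g$.

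The only step that needs care is this parity and boundary bookkeeping: checking that the two branches of the stated formula agree on the boundary cases, and that in the odd case both nearest integers are genuinely available. Everything else is a direct count. Combining Steps 2 and 3 gives the stated description of $S_c(P_{n-g,g})$.
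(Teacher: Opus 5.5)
Your proof is correct and complete. The count $f_T(i)=i(n-g+2^g-i)$ on the path, the strict bound $1+(n-g)2^{g-1}<(n-g)2^g$ for the leaves at $n-g$, and the parity and boundary analysis of this concave quadratic together give exactly the stated formula. That analysis correctly includes the overlap case $2^g=n-g$ and the trivial lower-end check $(N-1)/2\geq 1$, which holds since $N\geq n-g+2\geq 4$.

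The paper gives no proof of its own here; it quotes the result from \cite{Des}. So there is nothing to compare against, except that your direct subtree count is the same kind of computation the paper carries out for $f_T(i,1)$ and $f_T(i,n-g+1)$ when locating the core center of $P_{n-g,g}$ in Section 3.
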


The  characteristic center of $P_{n-g,g}$ is not known for every $g$. In \cite{Pandey2}, the authors have conjectured that 
$|\chi_c(P_{n-g,g})|=2$. In $P_{n-g,g}$, the center, centroid, characteristic center, subtree core and core center  all lie on the path from $2$ to $n-g$. The order in which the center, centroid, characteristic center and  subtree core occur in $P_{n-g,g}$ are studied earlier. In $P_{n-g,g}$, $\chi_c(P_{n-g,g})$ lies on the path from $C(P_{n-g,g})$ to $C_d(P_{n-g,g})$ (see \cite{Klp}, Theorem 3.3) and $C_d(P_{n-g,g})$ lies on the path from $C(P_{n-g,g})$ to $S_c(P_{n-g,g})$ (see \cite{Des}, Proposition 4.1). In the next result we obtain the  position of core center with respect to other central parts in $P_{n-g,g}$.

\begin{proposition}\label{prop3}
Let $n\geq 5$ and $2\leq g \leq n-3$. Then the core center of $P_{n-g,g}$ lies on the path from  $2$ to  $C(P_{n-g,g})$.
\end{proposition}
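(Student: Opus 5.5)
Since every vertex of the path $2,3,\ldots,n-g$ lies between $2$ and any vertex of $C(P_{n-g,g})$, it suffices to compare indices. By Lemma \ref{lem2.1} the core center is contained in $\{p-1,p\}$ with $p=\lceil \frac{n-g+2^g}{1+2^{(g-1)}}\rceil$, and $p$ always belongs to it. By Lemma \ref{lem2.2} the largest center vertex is $c^{*}=\frac{n-g+2}{2}$ if $n-g$ is even and $\frac{n-g+3}{2}$ if $n-g$ is odd. In both cases $c^{*}=\lfloor\frac{n-g+3}{2}\rfloor$ and $\lceil\frac{n-g+2}{2}\rceil$ is a center vertex. So I would show $p\le \frac{n-g+2}{2}$ rounded up appropriately. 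Then every core vertex has index at most some center index, and so lies on the path from $2$ to $C(P_{n-g,g})$. Note that $p-1<p$, and Remark \ref{rem:p} gives $p\ge 3>2$.

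Write $m=n-g\ge 3$ and $a=2^{g-1}\ge 2$, since $g\ge 2$. The key inequality to prove is
\[
\frac{m+2a}{1+a}\le \frac{m+2}{2}.
\]
Clearing denominators, this is $2m+4a\le (m+2)(1+a)=m+ma+2+2a$, i.e. $m+2a\le ma+2$, i.e. $(m-2)(a-1)\ge 0$. This holds because $m\ge 3$ and $a\ge 2$. Hence $p=\lceil \frac{m+2a}{1+a}\rceil\le\lceil\frac{m+2}{2}\rceil$. When $m$ is even, $\lceil\frac{m+2}{2}\rceil=\frac{m+2}{2}$, which is the unique center vertex. When $m$ is odd, $\lceil\frac{m+2}{2}\rceil=\frac{m+3}{2}$, which is the larger center vertex.

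Therefore every vertex of $\mathfrak{C}(P_{n-g,g})$ has index in $[2,\max C(P_{n-g,g})]$, so it lies on the path from $2$ to $C(P_{n-g,g})$. The argument has essentially no obstacle beyond this elementary inequality.
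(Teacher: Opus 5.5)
Your proof is correct and follows the paper's own argument. You prove the same key inequality $\frac{n-g+2^g}{1+2^{(g-1)}}\le\frac{n-g+2}{2}$, where the paper obtains it strictly using $4<n-g+2$ and you obtain it through the cleaner factorization $(m-2)(a-1)\ge 0$; like the paper, you then take ceilings according to the parity of $n-g$ and combine Lemma~\ref{lem2.1}, Remark~\ref{rem:p} and Lemma~\ref{lem2.2}.
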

\begin{proof}
We have 
\begin{align*}
2(n-g+2^g)&= 2(n-g+2)+2(2^g-2)\\
          &=2(n-g+2)+4(2^{g-1}-1)\\
          &<2(n-g+2)+(n-g+2)(2^{g-1}-1)\;\;\;\mbox{(as $4<n-g+2$)}\\
          &=(n-g+2)(1+2^{g-1})
\end{align*}
So, $\frac{n-g+2^g}{1+2^{(g-1)}}<\frac{n-g+2}{2}.$ Thus, if $n-g$ is even then $\frac{n-g+2}{2}$ is an integer and so $\lceil \frac{n-g+2^g}{1+2^{(g-1)}} \rceil\leq \frac{n-g+2}{2}$. If $n-g$ is odd, then  $\lceil \frac{n-g+2}{2} \rceil=\frac{n-g+3}{2}$ and so $\lceil \frac{n-g+2^g}{1+2^{(g-1)}} \rceil\leq \frac{n-g+3}{2}$. Now the result follows from Lemma \ref{lem2.1}, Remark \ref{rem:p} and Lemma \ref{lem2.2}.
\end{proof}

\section{Position of core center}\label{pcc}

In this section, we study the movement of the core center in  a tree by applying some perturbations to it. It can be easily checked that if $P:v_1v_2\cdots v_k$ be the $v_1$-$v_k$ path of a tree $T$ on $n$ vertices then $f_T(v_1,v_k)\leq 2^{n-k}$ and the equality happens if and only if all the vertices not on the $v_1$-$v_k$ path are pendant vertices of $T$. This fact is useful in proving some of the main results of this section.

\begin{lemma}\label{lem3}
Let $x$ be a pendant vertex of a tree $T$ and let $v_1\in V(T)$. Suppose $P:v_1v_2\cdots v_dx$ is the $v_1$-$x$ path in $T$. Then $(d+1)f_T(v_1,x)-f_T(v_1)\geq 0$   
\end{lemma}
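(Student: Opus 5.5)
We will prove the lemma by splitting the subtrees containing $v_1$ according to how far they extend along the path $P$. Every subtree $S$ of $T$ containing $v_1$ contains an initial segment $v_1v_2\cdots v_j$ of $P$ and not $v_{j+1}$ (with $v_{d+1}:=x$). So we define, for $1\leq j\leq d$, $A_j$ to be the set of subtrees containing $v_1,\ldots,v_j$ but not $v_{j+1}$, and $A_{d+1}$ to be the set of subtrees containing $v_1$ and $x$. This gives the partition
\[
f_T(v_1)=\sum_{j=1}^{d+1}|A_j|,\qquad |A_{d+1}|=f_T(v_1,x).
\]
It therefore suffices to show $|A_j|\leq f_T(v_1,x)$ for each $1\leq j\leq d$. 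Summing these $d$ inequalities together with the trivial equality for $j=d+1$ gives $f_T(v_1)\leq (d+1)f_T(v_1,x)$, which is the claim.

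For the key inequality $|A_j|\leq f_T(v_1,x)$ we use an injection. Given $S\in A_j$, let $\phi(S)=S\cup\{v_{j+1},\ldots,v_d,x\}$, i.e.\ we adjoin the remaining path vertices. Since $S$ contains $v_j$ and $P$ continues from $v_j$, the set $\phi(S)$ induces a subtree containing $v_1$ and $x$. To recover $S$ from $\phi(S)$, delete from $\phi(S)$ the component of $\phi(S)-v_jv_{j+1}$ that contains $v_{j+1}$. This works provided $S$ shares no vertex with the branch hanging beyond $v_j$ on the $v_{j+1}$ side. That is true, because $S$ is connected, contains $v_j$, and avoids $v_{j+1}$, so it lies entirely in the component of $T-v_jv_{j+1}$ containing $v_j$. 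The component of $\phi(S)-v_jv_{j+1}$ containing $v_j$ is then exactly $S$, so $\phi$ is injective.

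The only step needing care is this well-definedness and injectivity, specifically the check that $S$ lies in the $v_j$-side component. Everything else is bookkeeping. The pendant hypothesis on $x$ is not strictly needed for this argument, but it is harmless; it guarantees $x$ is the end of $P$, and it matches how the lemma will be applied later.
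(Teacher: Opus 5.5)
Your proof is correct and follows the same idea as the paper: both partition the subtrees containing $v_1$ by the first vertex of $P$ they miss, and both bound each class by $f_T(v_1,x)$. The paper computes each class size as a product $\prod_{i=1}^{j} f_{X_i}(v_i)$ over the pieces $X_i$ hanging off the path and uses $f_{X_i}(v_i)\geq 1$; your injection (extending along $P$) gives the same termwise bound directly and, as you note, does not use that $x$ is pendant, which the paper needs only to identify $f_T(v_1,v_d,\overline{x})$ with $f_T(v_1,x)$.
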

\begin{proof}
Let $X_1$ be the component of $T-v_1v_2$ containing $v_1$, $X_2$ be the component of  $T-v_1v_2-v_2v_3$ containing $v_2$, $\ldots$ , $X_d$ be the component of  $T-v_{d-1}v_d-v_dx$ containing $v_d$ (see Figure \ref{fig2}). Then
\begin{figure}[h]
    \centering
 \begin{tikzpicture}[scale=1.2, every node/.style={scale=1}]
    \node[draw,circle,fill=black, inner sep=1.5pt, label=above:\(v_1\)](m)at (1.5,0){};
    \node[draw,circle,fill=black, inner sep=1.5pt, label=above:\(v_2\)](m1)at (4.5,0){};
    \node[draw,circle,fill=black, inner sep=1.5pt, label=above:\(v_d\)](m2)at (7.5,0){};
    \node[draw,circle,fill=black, inner sep=1.5pt, label=above:\(x\)](r)at (9,0){};
    \draw(m)--(m1);
    \draw[dashed](m1)--(m2);
    \draw(m2)--(r);
    \draw(1.5,0)--(1,-1)--(2,-1)--cycle;
    \node at (1.5,-0.8){\(X_1\)};
    \draw(4.5,0)--(4,-1)--(5,-1)--cycle;
    \node at (4.5,-0.8){\(X_2\)};
    \draw(7.5,0)--(7,-1)--(8,-1)--cycle;
    \node at (7.5,-0.8){\(X_d\)};
    \end{tikzpicture}
    \caption{The tree T}\label{fig2}
    
\end{figure}
 
\begin{align*}
    f_T(v_1,x)=\prod_{i=1}^{d} f_{X_i}(v_i),\;\; f_T(v_1,v_d,\overline{x})=\prod_{i=1}^{d} f_{X_i}(v_i)
\end{align*} 
and for $1\leq j \leq d-1$
\begin{align*}
   f_T(v_1,v_j,\overline{v_{j+1}})= \prod_{i=1}^{j} f_{X_i}(v_i).
\end{align*}
So, $f_T(v_1)=f_{X_1}(v_1)+f_{X_1}(v_1)f_{X_2}(v_2)+\cdots+2f_{X_1}(v_1)f_{X_2}(v_2)\cdots f_{X_d}(v_d)$. 

Thus, $(d+1)f_T(v_1,x)-f_T(v_1)=(\prod_{i=1}^{d} f_{X_i}(v_i)-f_{X_1}(v_1))+(\prod_{i=1}^{d} f_{X_i}(v_i)-(f_{X_1}(v_1)f_{X_2}(v_2)))+\cdots+2(\prod_{i=1}^{d} f_{X_i}(v_i)-\prod_{i=1}^{d} f_{X_i}(v_i))\geq 0.$
\end{proof}

\begin{lemma} \label{lem4}
Let $v_k$ be a pendant vertex of a tree $T$ and let $P:v_1v_2\cdots v_k \;(k\geq 2)$ be the $v_1$-$v_k$ path in $T$. For $i\geq 2$, consider the vertex $v_i$ on the path $P$ and construct a new tree $T'$ from $T$ by deleting all the vertices of $T$ except the vertices that lie on the path from $v_1$ to $v_i$ and adding the same number of pendant vertices to the vertex $v_{i-1}$. Then $\frac{f_T(v_1,v_k)}{f_T(v_1)} \leq \frac{f_{T'}(v_1,v_k)}{f_{T'}(v_1)}$.
\end{lemma}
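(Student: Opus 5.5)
The plan is to reduce both sides to explicit expressions and compare them term by term. I read $T'$ as the tree made of the path $v_1v_2\cdots v_i$ together with $n-i$ new pendant vertices attached at $v_{i-1}$, where $n=|V(T)|$ and $2\le i\le k$. In $T'$ the vertex $v_{i-1}$ then carries $m=n-i+1$ pendant neighbours, namely $v_i$ and the new ones. These are interchangeable, so $f_{T'}(v_1,v_k)$ is to be understood as $f_{T'}(v_1,z)$ for any one of these pendants $z$.

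\textbf{Step 1 (the ratio for $T'$).} A subtree of $T'$ containing $v_1$ either avoids $v_{i-1}$ or contains it. If it avoids $v_{i-1}$, it is one of the paths $v_1\cdots v_j$ with $j\le i-2$, giving $i-2$ subtrees. If it contains $v_{i-1}$, it is the path $v_1\cdots v_{i-1}$ plus any subset of the $m$ pendants at $v_{i-1}$, giving $2^{m}$ subtrees. A subtree containing both $v_1$ and $z$ is $v_1\cdots v_{i-1}z$ plus any subset of the other $m-1$ pendants. Hence
\[
\frac{f_{T'}(v_1)}{f_{T'}(v_1,z)}=\frac{(i-2)+2^{m}}{2^{m-1}}=2+\frac{i-2}{2^{\,n-i}} .
\]

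\textbf{Step 2 (the ratio for $T$).} I use the decomposition from the proof of Lemma \ref{lem3}, with the path $v_1\cdots v_{k-1}$ and pendant $v_k$. Let $X_j$ be the branch hanging at $v_j$ and put $a_j=f_{X_j}(v_j)$ and $P=\prod_{j=1}^{k-1}a_j$. Then $f_T(v_1,v_k)=P$ and $f_T(v_1)=\sum_{j=1}^{k-2}a_1\cdots a_j+2P$. Dividing by $P$ gives
\[
\frac{f_T(v_1)}{f_T(v_1,v_k)}=2+\sum_{j=1}^{k-2}\frac{1}{a_{j+1}\cdots a_{k-1}} .
\]
The inequality in Lemma \ref{lem4} is equivalent to the reciprocal inequality
\[
\sum_{j=1}^{k-2}\frac{1}{a_{j+1}\cdots a_{k-1}}\ \ge\ \frac{i-2}{2^{\,n-i}} .
\]

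\textbf{Step 3 (comparison).} This is the only real step. I use the fact stated before Lemma \ref{lem3}: for any vertex $w$ of a tree $X$, $f_X(w)\le 2^{|X|-1}$. Therefore
\[
a_{j+1}\cdots a_{k-1}\le 2^{s_j},\qquad s_j=\sum_{l=j+1}^{k-1}\bigl(|X_l|-1\bigr).
\]
The sequence $s_j$ is nonincreasing in $j$. Moreover, since $\sum_{l=1}^{k-1}|X_l|=n-1$ and $|X_1|\ge 1$, we have
\[
s_1\le (n-2)-(k-2)=n-k\le n-i .
\]
Keeping only the terms $j=1,\dots,i-2$ (these exist because $i\le k$), each term is at least $2^{-s_j}\ge 2^{-(n-i)}$, so the left-hand sum is at least $(i-2)/2^{n-i}$, which is exactly the reciprocal inequality from Step 2. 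When $i=2$ both sides of that inequality are zero, so equality holds.

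\textbf{Expected obstacle.} The main difficulty is setting up the bookkeeping of Step 2 correctly and matching indices so that the count $i-2$ and the exponent $n-i$ line up. Once that is done, the crude bound $f_X(w)\le 2^{|X|-1}$ gives the result directly.
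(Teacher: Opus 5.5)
Your proof is correct and takes essentially the same route as the paper. Both use the path decomposition $f_T(v_1)=\sum_{j\le k-2}M_j+2M_{k-1}$ with $f_T(v_1,v_k)=M_{k-1}$, the explicit counts $f_{T'}(v_1,v_k)=2^{n-i}$ and $f_{T'}(v_1)=(i-2)+2^{n-i+1}$, and a power-of-two bound. The only difference is cosmetic: you divide by $M_{k-1}$ and bound each tail product $a_{j+1}\cdots a_{k-1}\le 2^{n-i}$, whereas the paper cross-multiplies and uses $M_j\ge 1$ together with $M_{k-1}\le 2^{n-k}\le 2^{n-i}$.
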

\begin{proof}
 Let $X_1$ be the component of $T-v_1v_2$ containing $v_1$, $X_2$ be the component of  $T-v_1v_2-v_2v_3$ containing $v_2$, $\ldots$ , $X_{k-1}$ be the component of  $T-v_{k-2}v_{k-1}-v_{k-1}v_k$ containing $v_{k-1}$. Then 
\begin{align*}
    f_T(v_1,v_k)=\prod_{j=1}^{k-1} f_{X_j}(v_j)
\end{align*}
and
\begin{align*}
     f_T(v_1)=f_{X_1}(v_1)+f_{X_1}(v_1)f_{X_2}(v_2)+\cdots+2f_{X_1}(v_1)f_{X_2}(v_2)\cdots f_{X_{k-1}}(v_{k-1}).
\end{align*}
Let $|V(T)|=n$. Note that in $T'$, $v_k$ is a pendant vertex adjacent to $v_{i-1}$. Then
\begin{align}\label{k1}
  f_{T'}(v_1,v_k)=2^{n-i} \ \mbox{and} \ 
  f_{T'}(v_1)=(i-2)+2^{n-i+1}.  
\end{align}
We have $$\frac{f_T(v_1,v_k)}{f_T(v_1)} - \frac{f_{T'}(v_1,v_k)}{f_{T'}(v_1)}=\frac{f_{T'}(v_1)f_T(v_1,v_k)-f_{T'}(v_1,v_k)f_T(v_1)}{f_T(v_1)f_{T'}(v_1)}.$$ So, it is enough to show that $f_{T'}(v_1)f_T(v_1,v_k)-f_{T'}(v_1,v_k)f_T(v_1)\leq 0$.

For $1\leq l \leq k-1$, suppose $M_l=\prod_{j=1}^{l} f_{X_j}(v_j)$. Then $f_T(v_1,v_k)=M_{k-1}$ and $f_T(v_1)=M_1+M_2+\cdots+2M_{k-1}.$  Since $M_l\geq 1$ for $1\leq l \leq k-1$,  we have

\begin{align}\label{k2}
(M_1+M_2+\cdots+M_{k-2})\geq (k-2)\geq (i-2),
\end{align}
and
\begin{align}\label{k3}
 M_{k-1}\leq 2^{n-k}\leq 2^{n-i} \ \mbox{as} \ k\geq i.
\end{align}

Thus
\begin{align*} 
&f_{T'}(v_1)f_T(v_1,v_k)-f_{T'}(v_1,v_k)f_T(v_1)\\
&=((i-2)+2^{n-i+1})M_{k-1}- 2^{n-i}(M_1+M_2+\cdots+2M_{k-1})\;\;\;\mbox{(using (\ref{k1}))}\\
&=(i-2)M_{k-1}+2^{n-i+1}M_{k-1}-2^{n-i}(M_1+\cdots+M_{k-2})-2^{n-i+1}M_{k-1}\\
&=(i-2)M_{k-1}-2^{n-i}(M_1+M_2+\cdots+M_{k-2})\\
& \leq (i-2)[M_{k-1}-2^{n-i}]\;\;\;\mbox{(using (\ref{k2}))}\\
&\leq 0 \;\;\;\mbox{(using (\ref{k3}))}.
\end{align*} 
This proves the result.
\end{proof}

The next two propositions are very useful in obtaining a tree which maximizes the distance between the core center and other central parts over trees on $n$ vertices.
\begin{proposition} \label{prop4}
Let $uv$ be an edge of a tree $T$ with $\epsilon_{T}(u)>\epsilon_{T}(v)$. Let $X$ and $Y$ be the components of $T-uv$ containing $u$ and $v$, respectively. Let $r\in V(Y)$ be a pendant vertex of $T$ and let $P$ be the $v$-$r$ path in $Y$. Let $y$ be a vertex on $P$ different from $v$. Construct a new tree $T'$ from $T$ by deleting all the vertices of $Y$ except the vertices that lie on the $v$-$y$ path and adding the same number of pendant vertices to the neighbour of $y$ on the $v$-$y$ path. Then $\epsilon_{T'}(u)>\epsilon_{T'}(v)$.   
\end{proposition}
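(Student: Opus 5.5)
The plan is to reduce the inequality $\epsilon(u)>\epsilon(v)$ to a comparison of two ratios, one depending only on $X$ and one only on $Y$. The construction of $T'$ leaves $X$ unchanged and replaces $Y$ by a tree $Y'$ that satisfies the hypothesis of Lemma \ref{lem4}. That lemma then shows the $Y$-ratio can only increase.

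\textbf{Step 1 (factorisation).} Put $F=f_X(u)$, $G=f_Y(v)$, $a=\min\{f_X(u,w):w\in V(X)\}$ and $b=\min\{f_Y(v,z):z\in V(Y)\}$. For $w\in V(X)$ and $z\in V(Y)$, a subtree of $T$ containing two of these vertices either stays on one side of $uv$ or uses the edge $uv$. Counting both cases gives
\begin{align*}
f_T(u,w)&=f_X(u,w)(1+G), & f_T(v,w)&=f_X(u,w)\,G,\\
f_T(u,z)&=F\,f_Y(v,z), & f_T(v,z)&=(1+F)\,f_Y(v,z).
\end{align*}
Hence
\[
\epsilon_T(u)=\min\{a(1+G),\,Fb\}\quad\text{and}\quad \epsilon_T(v)=\min\{aG,\,(1+F)b\}.
\]

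\textbf{Step 2 (reformulation).} I claim that $\epsilon_T(u)>\epsilon_T(v)$ holds if and only if $aG<Fb$, that is, if and only if $\frac{a}{F}<\frac{b}{G}$.
\begin{itemize}
\item If $aG<Fb$, then $\epsilon_T(v)\le aG<\min\{a(1+G),Fb\}=\epsilon_T(u)$.
\item If $aG\ge Fb$, then $\epsilon_T(v)\ge \min\{Fb,(1+F)b\}=Fb\ge \epsilon_T(u)$.
\end{itemize}
The same computation applies verbatim to $T'$, with $X$, $a$ and $F$ unchanged and $Y$ replaced by $Y'$. So it suffices to prove
\[
\frac{b'}{G'}\ge \frac{b}{G},
\]
where $G'=f_{Y'}(v)$ and $b'=\min\{f_{Y'}(v,z):z\in V(Y')\}$.

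\textbf{Step 3 (comparison of $Y$ and $Y'$).} The first task is to identify $b'$.
\begin{itemize}
\item Since $y\neq v$, the path $P$ has $k\ge 2$ vertices. The vertex $r$ is a pendant vertex of $T$ lying in $Y$ and $r\neq v$, so $r$ is pendant in $Y$.
\item $Y'$ consists of the $v$-$y$ path $v=v_1v_2\cdots v_i=y$, together with pendant vertices attached to $v_{i-1}$. Thus $y$ and the added vertices are pendant vertices of $Y'$, all adjacent to $v_{i-1}$, and they play symmetric roles.
\item By Theorem \ref{zha1}(i) applied in $Y'$, the minimum defining $b'$ is attained at a pendant vertex, or at $z=v$, which gives the value $G'$ and is never smaller. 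So $b'=f_{Y'}(v,r')$, where $r'$ is any vertex adjacent to $v_{i-1}$ off the path; this is exactly the vertex playing the role of $v_k$ in Lemma \ref{lem4}.
\end{itemize}
Now apply Lemma \ref{lem4} to the tree $Y$ with $v_1=v$, $v_k=r$ and $v_i=y$. The construction in that lemma is precisely $Y\mapsto Y'$, and the case $i=k$ is allowed. The lemma yields
\[
\frac{b}{G}\le \frac{f_Y(v,r)}{f_Y(v)}\le \frac{f_{Y'}(v,r')}{f_{Y'}(v)}=\frac{b'}{G'}.
\]
Combined with $\frac{a}{F}<\frac{b}{G}$, this gives $\frac{a}{F}<\frac{b'}{G'}$, and Step 2 applied to $T'$ gives $\epsilon_{T'}(u)>\epsilon_{T'}(v)$.

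The main obstacle is Step 2: getting the clean equivalence with the ratio condition right, in particular checking that the two "wrong-side" terms $a(1+G)$ and $(1+F)b$ never decide the comparison. A secondary point is confirming in Step 3 that the minimum $b'$ in $Y'$ is attained at a pendant vertex adjacent to $v_{i-1}$, so that Lemma \ref{lem4} applies directly.
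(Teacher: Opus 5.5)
Your proposal is correct and follows essentially the paper's argument. Both factor the subtree counts across the edge $uv$, reduce $\epsilon(u)>\epsilon(v)$ to the ratio inequality $f_X(u,s)/f_X(u)<f_Y(v,r)/f_Y(v)$ with $X$ left untouched, and then raise the $Y$-side ratio using Lemma \ref{lem4}. Your explicit formulas $\epsilon_T(u)=\min\{a(1+G),Fb\}$ and $\epsilon_T(v)=\min\{aG,(1+F)b\}$, with the resulting ``if and only if'', just repackage the paper's appeal to Lemma \ref{lem2} and its case split on where the minimiser for $\epsilon_{T'}(u)$ lies.
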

\begin{proof}
Let $\epsilon_{T}(u)=f_T(u,p)$ and $\epsilon_{T}(v)=f_T(v,s)$. Then by Lemma \ref{lem2}, $s\in V(X).$ Let $X'$ and $Y'$ be the components of $T'-uv$ containing $u$ and $v$, respectively. Clearly $X=X'$.

Suppose $\epsilon_{T'}(u)=f_{T'}(u,w)$. If $w\in V(X')$ then ${\epsilon}_{T'}(v)\leq f_{T'}(v,w)=f_{T'}(v,u,w)<f_{T'}(u,w)={\epsilon}_{T'}(u)$.
So,  ${\epsilon}_{T'}(v)< {\epsilon}_{T'}(u)$.

If $w\in V(Y')$ then $\epsilon_{T'}(u)=f_{T'}(u,w)=f_{T'}(u,r)$ (since $w$ and $r$ are pendant vertices adjacent to the same vertex). In $T$, we have

\begin{align*}
 f_X(u,s)f_Y(v)=f_T(v,s)= \epsilon_T(v)<\epsilon_T(u)=f_T(u,p)\leq f_T(u,r)=f_X(u)f_Y(v,r).
\end{align*}
So, $\frac{f_X(u,s)}{f_X(u)}<\frac{f_Y(v,r)}{f_Y(v)}$ and by Lemma \ref{lem4}, we get
\begin{align*}
  \frac{f_X(u,s)}{f_X(u)}<\frac{f_Y(v,r)}{f_Y(v)}  \leq \frac{f_{Y'}(v,r)}{f_{Y'}(v)}.
\end{align*}
 
Since $X=X'$, we have $f_{X'}(u,s)f_{Y'}(v)< f_{X'}(u)f_{Y'}(v,r)$ and hence $f_{T'}(v,s)<f_{T'}(u,r)$. Now, from  $\epsilon_{T'}(u)=f_{T'}(u,r)$ we get $\epsilon_{T'}(v)\leq f_{T'}(v,s)<f_{T'}(u,r)=\epsilon_{T'}(u)$. This proves the result.
\end{proof}

\begin{proposition} \label{prop5}
Let $uv$ be an edge of a tree $T$ with $\epsilon_{T}(u)>\epsilon_{T}(v)$ and let $\epsilon_{T}(v)=f_T(v,s)$. Let $X$ and $Y$ be the components of $T-uv$ containing $u$ and $v$, respectively. Here $s\in V(X)$.  Let $Y=P_{k-g,g}\;(g\geq 1)$, where $v$ is initial vertex. Let $r\in V(Y)$ be a pendant vertex of $T$ different from $v$. Construct a new tree $\hat{T}$ from $T$ by deleting all the vertices of $X$ except the vertices that lie on the $u$-$s$ path and adding the same number of pendant vertices to the neighbour of $r$. Then $\epsilon_{\hat{T}}(u)>\epsilon_{\hat{T}}(v)$.   
\end{proposition}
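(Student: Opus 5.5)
Throughout I read the construction literally: the deleted vertices of $X$ are re-attached as pendant vertices to the neighbour of $r$, which is the star centre of $Y$. Then $\hat{X}$, the component of $\hat{T}-uv$ containing $u$, is just the $u$-$s$ path. Let it have $m$ vertices. The component $\hat{Y}$ is again a path-star tree $P_{k-g,g'}$ with $g'\geq g$, and $v$ is still its initial vertex. The plan is to reduce both the hypothesis and the conclusion to inequalities between ratios of the form $f(\cdot,\text{pendant})/f(\cdot)$ on the two sides of the edge $uv$, as in the proof of Proposition \ref{prop4}.

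\emph{Step 1: translate the hypothesis.} Since $s\in V(X)$, we have $\epsilon_T(v)=f_T(v,s)=f_X(u,s)f_Y(v)$. Also $\epsilon_T(u)\leq f_T(u,r)=f_X(u)f_Y(v,r)$. So the hypothesis $\epsilon_T(u)>\epsilon_T(v)$ gives
$$\frac{f_X(u,s)}{f_X(u)}<\frac{f_Y(v,r)}{f_Y(v)}.$$

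\emph{Step 2: compare with $\hat{T}$ on each side of $uv$.}
\begin{itemize}
\item[(a)] On the $X$ side, Lemma \ref{lem3} applied to the $u$-$s$ path (with $d+1=m$ vertices) gives $m f_X(u,s)\geq f_X(u)$. Hence $\frac{1}{m}\leq \frac{f_X(u,s)}{f_X(u)}$. The left-hand side is exactly $f_{\hat{X}}(u,s)/f_{\hat{X}}(u)$, since $f_{\hat X}(u,s)=1$ and $f_{\hat X}(u)=m$. This also covers the trivial case $u=s$.
\item[(b)] On the $Y$ side, for the path-star tree $P_{a,b}$ with initial vertex $v$ and any star leaf $w$, we have $f(v)=(a-1)+2^b$ and $f(v,w)=2^{b-1}$. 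The ratio $2^{b-1}/(a-1+2^b)$ is increasing in $b$. So $\frac{f_Y(v,r)}{f_Y(v)}\leq\frac{f_{\hat{Y}}(v,w)}{f_{\hat{Y}}(v)}$ for every pendant vertex $w\neq v$ of $\hat{Y}$.
\end{itemize}
When $g=1$, $r$ is the end of the path and $\hat{Y}$ gains leaves at its neighbour. The same formula applies, since all pendant vertices of $\hat Y$ other than $v$ are symmetric.

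\emph{Step 3: conclude.} Combining the inequalities of Steps 1 and 2 gives
$$f_{\hat Y}(v)<m\,f_{\hat Y}(v,w)$$
for every pendant $w\neq v$ of $\hat Y$. By Theorem \ref{zha1}(i), $\epsilon_{\hat T}(u)=f_{\hat T}(u,z)$ for some pendant vertex $z$ of $\hat{T}$.
\begin{itemize}
\item[(i)] If $z\in V(\hat X)$, then $z=s$, or $z=u$ when $u=s$. In this case $\epsilon_{\hat T}(v)\leq f_{\hat T}(v,s)=f_{\hat T}(v,u,s)<f_{\hat T}(u,s)=\epsilon_{\hat T}(u)$.
\item[(ii)] If $z\in V(\hat Y)$, then $\epsilon_{\hat T}(u)=f_{\hat X}(u)f_{\hat Y}(v,z)=m f_{\hat Y}(v,z)$. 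This is strictly larger than $f_{\hat Y}(v)=f_{\hat T}(v,s)\geq\epsilon_{\hat T}(v)$.
\end{itemize}
Here $z\neq v$, because $v$ is adjacent to $u$ and so is not a pendant vertex of $\hat T$.

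The main obstacle is Step 2(a): showing that collapsing $X$ to the bare $u$-$s$ path does not decrease the ratio on the $u$ side. This is exactly what Lemma \ref{lem3} provides. The monotonicity in Step 2(b) is a routine one-line computation.
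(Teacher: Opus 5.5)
Your proof is correct and follows essentially the same route as the paper. You rewrite $\epsilon_T(u)>\epsilon_T(v)$ as $f_X(u,s)/f_X(u)<f_Y(v,r)/f_Y(v)$. You use Lemma \ref{lem3} to show that collapsing $X$ to the $u$-$s$ path cannot raise the left ratio, and the explicit path-star count to show that adding leaves to the star centre cannot lower the right ratio. You then split on whether the minimizer for $u$ in $\hat{T}$ lies in $\hat{X}$ or $\hat{Y}$. The only cosmetic differences are that you do the case split at the end rather than the start, and you prove the inequality for every leaf $w$ of $\hat{Y}$ instead of invoking symmetry to reduce to $r$.
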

\begin{proof}
Let  $\hat{X}$ and $\hat{Y}$ be the components of $\hat{T}-uv$ containing $u$ and $v$, respectively. If $X=\hat{X}$ then $T=\hat{T}$ and hence the result follows. So, assume that $X\neq \hat{X}.$ Clearly, $\hat{Y}=P_{m-h,h}$ with $m>k$. Then $m-k=h-g$ and  so, $m-h=k-g$.

Let $\epsilon_{\hat{T}}(u)=f_{\hat{T}}(u,p)$. If $p\in \hat{X}$ then $\epsilon_{\hat{T}}(v)\leq f_{\hat{T}}(v,p)=f_{\hat{T}}(v,u,p)<f_{\hat{T}}(u,p)=\epsilon_{\hat{T}}(u)$, and the result follows. 

Otherwise, let $p\in \hat{Y}$. Then $\epsilon_{\hat{T}}(u)=f_{\hat{T}}(u,p)=f_{\hat{T}}(u,r)$ as $\hat{Y}=P_{m-h,h}$. In $T$, we have $$f_Y(v)f_X(u,s)=f_T(v,s)=\epsilon_{T}(v)<\epsilon_{T}(u)\leq f_{T}(u,r)=f_X(u)f_Y(v,r).$$ Thus,

\begin{align}\label{k4}
\frac{f_{X}(u,s)}{f_{X}(u)}<\frac{f_{Y}(v,r)}{f_{Y}(v)}.
\end{align}
We have $\frac{f_Y(v,r)}{f_Y(v)} - \frac{f_{\hat{Y}}(v,r)}{f_{\hat{Y}}(v)}=\frac{f_{\hat{Y}}(v)f_Y(v,r)-f_{\hat{Y}}(v,r)f_Y(v)}{f_Y(v)f_{\hat{Y}}(v)}.$ Now, 
\begin{align*} 
&f_{\hat{Y}}(v)f_Y(v,r)-f_{\hat{Y}}(v,r)f_Y(v)\\
&=((m-h-1)+2^h)2^{g-1}- 2^{h-1}(k-g-1+2^g)\\
&=2^{g-1}(m-h-1)-2^{h-1}(k-g-1)\\
&=2^{g-1}(k-g-1)-2^{h-1}(k-g-1)\;\;\;(\mbox{as $m-h=k-g$})\\
&< 0 \;\;\;(\mbox{as $g<h$}).
\end{align*} 

So, $\frac{f_Y(v,r)}{f_Y(v)} < \frac{f_{\hat{Y}}(v,r)}{f_{\hat{Y}}(v)}$ and by (\ref{k4}), we get 

\begin{align}\label{k5}
\frac{f_{X}(u,s)}{f_{X}(u)}<\frac{f_{\hat{Y}}(v,r)}{f_{\hat{Y}}(v)}.
\end{align}

Let the length of the $u$-$s$ path in $T$ be $d$. Then $f_{\hat{X}}(u,s)=1$ and $f_{\hat{X}}(u)=d+1$. Thus 
\begin{align*} 
&\frac{ f_{\hat{X}}(u,s)}{f_{\hat{X}}(u)}- \frac{f_{X}(u,s)}{f_{X}(u)}\\
&=\frac{1}{d+1}-\frac{f_{X}(u,s)}{f_{X}(u)}\\
&=\frac{f_{X}(u)-(d+1)f_{X}(u,s)}{(d+1)f_{X}(u)}\\
&\leq 0 \;\;\;(\mbox{by Lemma \ref{lem3}}).
\end{align*}
So, 
\begin{align}\label{k6}
\frac{ f_{\hat{X}}(u,s)}{f_{\hat{X}}(u)}\leq \frac{ f_{X}(u,s)}{f_{X}(u)}.      
\end{align}

Then by (\ref{k4}), (\ref{k5}) and (\ref{k6}), we get 
\begin{align*}
 \frac{ f_{\hat{X}}(u,s)}{f_{\hat{X}}(u)}\leq \frac{ f_{X}(u,s)}{f_{X}(u)}<\frac{f_{Y}(v,r)}{f_{Y}(v)}< \frac {f_{\hat{Y}}(v,r)}{f_{\hat{Y}}(v)}. 
\end{align*}

Therefore, 
\begin{align*} 
&f_{\hat{Y}}(v)f_{\hat{X}}(u,s)<f_{\hat{X}}(u)f_{\hat{Y}}(v,r)\\
&\Rightarrow f_{\hat{T}}(v,s)<f_{\hat{T}}(u,r)=\epsilon_{\hat{T}}(u)\\
&\Rightarrow {{\epsilon}}_{\hat{T}}(v)\leq f_{\hat{T}}(v,s)<f_{\hat{T}}(u,r)=\epsilon_{\hat{T}}(u).
\end{align*}
This proves the result.
                         
\end{proof}

\section{Distance between central parts}\label{dcp}

For a tree $T$, by $d_T(C, \mathfrak{C})$, $d_T(C_d, \mathfrak{C})$, $d_T(S_c, \mathfrak{C})$ and $d_T(\chi_c, \mathfrak{C})$ we mean the distance between center and core center, distance between centroid and core center, distance between subtree core and core center and distance between characteristic center and core center in $T$, respectively. Note that for $n\leq 4$, any tree on $n$ vertices is either a star or a path and for star or path all these central parts are the same. The trees $P_5$, $K_{1,4}$ and $P_{3,2}$ are the only three non-isomorphic trees on $5$ vertices. It can be easily verified that $C(P_{3,2})=\chi_c(P_{3,2})=\mathfrak{C}(P_{3,2})=\{ 2, 3\}$ and $C_d(P_{3,2})=S_c(P_{3,2})=\{ 3 \}$ and hence  $d_T(C, \mathfrak{C})=d_T(C_d, \mathfrak{C})=d_T(S_c, \mathfrak{C})=d_T(\chi_c, \mathfrak{C})=0$. So, we consider trees on $n\geq 6$ vertices. For $g\in \{2,3,\ldots,n-3\}$, we denote $p_g=\lceil \frac{n-g+2^g}{1+2^{(g-1)}} \rceil$.

In this section, we show that over trees on $n$ vertices, these distances are maximized by some path-star trees. We first consider the pair center and core center.

\subsection{Center and core center}

Recall that if $P$ is a longest path of a tree $T$ then $C(T)$ is same as the center of $P$. We start this sub-section with the following result.
\begin{theorem}\label{thm:cc1}
Let T be a tree on $n\geq 6$ vertices. Then  $d_T(C,\mathfrak{C})\leq d_{P_{n-g,g}}(C,\mathfrak{C})$ for some positive integer $g$.
\end{theorem}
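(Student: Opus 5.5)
The plan is to turn an arbitrary tree $T$ into a path-star tree using Proposition \ref{prop4} and Proposition \ref{prop5}. The key point is to keep the core center on one side of a fixed edge while the center can only move further away.

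\emph{Setup.} If $d_T(C,\mathfrak{C})=0$ there is nothing to prove, since any $g$ works. So assume $d:=d_T(C,\mathfrak{C})\geq 1$. Choose $c\in C(T)$ and $u\in\mathfrak{C}(T)$ with $d(c,u)=d$, and let $v$ be the neighbour of $u$ on the $u$-$c$ path. By minimality of $d$, $v\notin\mathfrak{C}(T)$, hence $\epsilon_T(u)>\epsilon_T(v)$. Let $X$ and $Y$ be the components of $T-uv$ containing $u$ and $v$, so $c\in V(Y)$. Let $s$ be a vertex with $\epsilon_T(v)=f_T(v,s)$; by Lemma \ref{lem2}, $s\in V(X)$. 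Let $r$ be a pendant vertex of $Y$ with $d(v,r)$ maximal, and set $b=d(u,r)$ and $a=\max\{d(u,x):x\in V(X)\}$. Since the center lies on a longest path and $c\in Y$, we have $b>a$, and $d$ is determined by $a$ and $b$: $d$ is roughly $\lfloor (b-a)/2\rfloor$, with the exact value depending on the parity of $a+b$ (and hence on $|C(T)|$).

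\emph{Step 1.} Apply Proposition \ref{prop4} with $y=r$. Everything in $Y$ off the $v$-$r$ path is deleted and replaced by the same number of pendant vertices at the neighbour of $r$. In the resulting tree $T'$ we still have $\epsilon_{T'}(u)>\epsilon_{T'}(v)$, and now $Y'$ is a path-star tree with initial vertex $v$ and pendant vertex $r$. The depth $d(v,r)$ is unchanged, and no vertex of $Y'$ is farther from $v$ than $r$.

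\emph{Step 2.} Apply Proposition \ref{prop5} to $T'$. We delete $X$ except for the $u$-$s$ path, where $s$ still realises $\epsilon_{T'}(v)$ and lies in $X$ by Lemma \ref{lem2}. The removed vertices become pendants at the neighbour of $r$. The resulting tree $\hat T$ is a path-star tree $P_{n-g,g}$ whose initial vertex is $s$ and whose long path runs $s\cdots u\,v\cdots$, and it still satisfies $\epsilon_{\hat T}(u)>\epsilon_{\hat T}(v)$.

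\emph{Locating the two central parts in $\hat T$.} For the core center, Corollary \ref{cor1} shows that $\epsilon_{\hat T}$ strictly decreases from $u$ through $v$ toward $r$. Hence $\mathfrak{C}(\hat T)$ lies on the $s$-$u$ side, i.e. in $\{u\}\cup V(\hat X)$. For the center, a longest path of $\hat T$ is the $s$-$r$ path, with depth $a'=d(u,s)\leq a$ on the $X$ side and depth $b'=b$ on the $Y$ side. Its midpoint is therefore at distance at least roughly $\lfloor (b-a)/2\rfloor$ from $u$ on the $v$ side. More precisely, the center of $\hat T$ is at least as far from $u$ toward $r$ as $C(T)$ was. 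Combining the two, every vertex of $\mathfrak{C}(\hat T)$ is separated from $C(\hat T)$ by at least $d$ edges, so $d_T(C,\mathfrak{C})\leq d_{\hat T}(C,\mathfrak{C})$ with $\hat T=P_{n-g,g}$. The degenerate case where no pendants are added, $g=1$, just gives a path and is harmless.

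\emph{Expected obstacle.} The delicate step is the center comparison. It needs a careful parity/case check covering $|C(T)|=2$ and $|C(\hat T)|=2$, and it must hold when $a'<a$ as well as when the center sits at $v$ itself. I would handle this by writing both centers explicitly as midpoint(s) of paths of lengths $a+b$ and $a'+b$ measured from $u$. Then the monotonicity in $a$ of the distance from $u$ to the nearer central vertex gives the required inequality.
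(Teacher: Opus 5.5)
Your proposal is correct and is essentially the paper's proof. You prune $Y$ to a path-star tree with initial vertex $v$ via Proposition \ref{prop4}, then cut $X$ down to the $u$-$s$ path via Proposition \ref{prop5}. Corollary \ref{cor1} then traps $\mathfrak{C}(\hat T)$ on the $u$ side while the center does not move closer to $u$, and your farthest leaf $r$ of $Y$ plays the role of the paper's endpoint $z_k$ of a longest path.

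The one imprecision is the claim that $d$ is determined by $a$ and $b$, which presumes a diametral path of $T$ passes through $u$. In general, since $\mathrm{diam}(T)\ge a+b$, you only get $d=b-\mathrm{rad}(T)\le b-\lceil (a+b)/2\rceil=\lfloor (b-a)/2\rfloor\le\lfloor (b-a')/2\rfloor=d_{\hat T}(u,C(\hat T))$. That inequality is exactly what you need, so the argument goes through.
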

\begin{proof}
Let $T$ be a tree of on $n\geq 6$ vertices. Let $\mathfrak{C}(T)=\{w,u\}$ and $C(T)=\{x,y\}$ where $w=u$ if $|\mathfrak{C}(T)|=1$ and $x=y$ if $|C(T)|=1$.  The result is obvious if $d_T(C,\mathfrak{C})=0$, so assume that  $d_T(C,\mathfrak{C})=d_T(u,x)\geq 1$.

Let $P:z_0z_1 \cdots z_k$ be a longest path in $T$ containing both $C(T)$ and $\mathfrak{C}(T)$. In $P$, let $z_0$ be closer to $\mathfrak{C}(T)$ than to $C(T)$. Let $z_i=u$ and $z_j=x$ where $1\leq i<j<k$(as neither center nor core center contain pendant vertices). Then $z_{i+1}\notin \mathfrak{C}(T)$ and so  $\epsilon_T(u)>\epsilon_T(z_{i+1})$. Here the center of $P$ is either same as $C(T)$ or move towards the vertex $z_k$.

Let $X$ and $Y$ be the components of $T-uz_{i+1}$ containing $u$ and $z_{i+1}$, respectively. Form  a new tree $T'$ from $T$ by deleting all the vertices of $Y$ except the vertices that lie on the $z_{i+1}$-$z_k$ path and adding the same number of pendant vertices to the vertex $z_{k-1}$. Note that if $Y\cong P_{m-h,h}\;(h\geq 1)$ with $z_{i+1}$ as the initial vertex then $T'=T.$ By Proposition \ref{prop4}, $\epsilon_{T'}(u)>\epsilon_{T'}(z_{i+1})$ and so $\mathfrak{C}(T)=\mathfrak{C}(T')$ or $\mathfrak{C}(T')$ moves away from the vertex $z_{i+1}$(in fact moves away from $z_k$). In $T'$, the center $C(T')$ lies on the path from $z_{i+1}$ to $z_k$. Since the center of $P$ is either same as $C(T)$ or move towards the vertex $z_k$, we have  either $C(T)=C(T')$ or $C(T')$ moves towards the vertex $z_k$. So, $d_{T'}(C,\mathfrak{C})\geq d_T(u,x)=d_T(C,\mathfrak{C})$.

Take the new tree $T'$ as $T$ and continue the same process, till the component $Y$ of $T-uz_{i+1}$ become a path-star tree with the initial vertex $z_{i+1}$. Now consider the component $Y$ of $T-uz_{i+1}$, which is a path-star tree . Let $v=z_{i+1}$ and $\epsilon_{T}(v)=f_T(v,s)$. Then by Lemma \ref{lem2}, $s$ is in $X$. If  the $u$-$s$ path is the whole component $X$, then $T$ is a path-star tree. Otherwise, construct a new tree $\hat{T}$ from $T$ by deleting all the vertices of $X$ except the vertices that lie on the $u$-$s$ path and adding the same number of pendant vertices to the vertex $z_{k-1}$. Now the new tree $\hat{T}$ is a path-star tree. By Proposition \ref{prop5}, $\epsilon_{\hat{T}}(u)>\epsilon_{\hat{T}}(v)$ and so $\mathfrak{C}(\hat{T})=\mathfrak{C}(T)$ or $\mathfrak{C}(\hat{T})$ moves away from the vertex $v$. If $d_T(s,z_k)=diam(T)$, then $C(T)=C(\hat{T})$, otherwise $C(\hat{T})$ moves towards the vertex $z_k$. So, $d_{\hat{T}}(C,\mathfrak{C})\geq d_T(C,\mathfrak{C})$, and this proves the result.
\end{proof}

Now to find the maximum  distance between center and core center over trees on $n\geq 6$ vertices, we need to study the behaviour of the core center in path-star trees $P_{n-g,g}$ for $2\leq g\leq n-3$. The following two lemmas address this matter.

\begin{lemma}\label{lem:cc1}
Let $n\geq 6$ be an integer. For $g\in \{2,3,\ldots,n-3\}$, let $p_g=\lceil \frac{n-g+2^g}{1+2^{(g-1)}} \rceil$. Then
\begin{enumerate}
\item[(i)] $p_g=3$ if  $2^{g-1}+g\geq n-3$;
\item[(ii)] $p_{g-1}- p_{g}\geq 1$ if  $2^{g-1}+g\leq n-3$.
\end{enumerate}
\end{lemma}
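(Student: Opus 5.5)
The plan is to rewrite $p_g$ so that the dependence on $g$ is explicit, and then handle the ceiling with one elementary inequality. Since $2^g = 2(1+2^{g-1})-2$, we have
$$\frac{n-g+2^g}{1+2^{(g-1)}} = 2+\frac{n-g-2}{1+2^{(g-1)}},$$
and therefore $p_g = 2+\lceil a_g\rceil$, where $a_g=\frac{n-g-2}{1+2^{(g-1)}}$. I will use this identity throughout. The same formula also defines $p_{g-1}$ when $g=2$, so that case needs no separate treatment.

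For part (i), note that $g\leq n-3$ gives $n-g-2\geq 1$, so $a_g>0$ and $\lceil a_g\rceil\geq 1$. Hence $p_g=3$ exactly when $a_g\leq 1$, that is, when $n-g-2\leq 1+2^{g-1}$. This rearranges to $2^{g-1}+g\geq n-3$, which gives (i).

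For part (ii), put $N=2^{g-1}$ and $m=n-g-2$. The hypothesis $2^{g-1}+g\leq n-3$ becomes $m\geq N+1$. Also $a_g=\frac{m}{1+N}$, and since $n-(g-1)-2=m+1$ and $1+2^{g-2}=\frac{2+N}{2}$, we get $a_{g-1}=\frac{2(m+1)}{2+N}$. We want $\lceil a_{g-1}\rceil\geq\lceil a_g\rceil+1$. Because $\lceil a_g\rceil$ is an integer, this is equivalent to the strict inequality $a_{g-1}>\lceil a_g\rceil$.

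Since $m$ and $1+N$ are integers, $\lceil a_g\rceil\leq \frac{m+N}{1+N}$. It therefore suffices to show
$$\frac{2(m+1)}{2+N}>\frac{m+N}{1+N}.$$
Cross-multiplying and expanding, this reduces to $mN+2>N^2$, which holds because $m\geq N+1$. This completes (ii).

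The main obstacle is handling the ceilings in (ii). Comparing the real numbers $a_{g-1}$ and $a_g$ directly is not enough, since their difference could be less than $1$. The way around this is the reformulation $a_{g-1}>\lceil a_g\rceil$ together with the integer bound $\lceil m/(1+N)\rceil\leq (m+N)/(1+N)$. After that step, everything is a routine polynomial inequality.
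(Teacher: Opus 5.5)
Your proof is correct, but it is organized differently from the paper's. The paper proves (i) by combining Remark~\ref{rem:p} ($p_g\ge 3$) with the bound $n-g+2^g\le 3(1+2^{g-1})$. It proves (ii) by a case split. On the boundary $2^{g-1}+g=n-3$ it computes $p_g=3$ and $p_{g-1}=4$ directly. In the strict case it shows that the unrounded quotients differ by more than $1$, and then applies $\lceil x\rceil-\lceil y\rceil\ge\lceil x-y\rceil-1$.

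Your shift $p_g=2+\lceil a_g\rceil$ gives (i) as an equivalence, which is slightly more than stated. This converse is what Theorem~\ref{thm:cc2} needs to get $p_{g_0-1}\ge 4$ when $2^{g_0-1}+g_0>n-3$, since there (ii) is not available at $g=g_0$. Your integer bound $\lceil m/(1+N)\rceil\le (m+N)/(1+N)$ then handles both cases of (ii) uniformly.

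One correction to your last paragraph: under the hypothesis, the real numbers do differ by at least $1$. Indeed $a_{g-1}-a_g=\frac{N(m+2)+2}{(N+1)(N+2)}\ge 1$ exactly when $m\ge N+1$, with equality only on the boundary. So monotonicity of the ceiling, $\lceil a_{g-1}\rceil\ge\lceil a_g+1\rceil=\lceil a_g\rceil+1$, already finishes (ii) without your integer bound. Only the paper's weaker estimate $\lceil x\rceil-\lceil y\rceil\ge\lceil x-y\rceil-1$ needs a strict inequality, and hence the separate boundary case.
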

\begin{proof}
(i) By Remark \ref{rem:p}, $p_g\geq 3$ for $g\in \{2,\ldots,n-3\}$. If $2^{g-1}+g\geq n-3$, then we have $n-g+2^g\leq 3+2^{g-1}+2^g=3(1+2^{g-1})$. Thus $\frac{n-g+2^g}{1+2^{(g-1)}} \leq 3$ and the result follows.\\

(ii) We have two cases here.\\
\textbf{Case-I:}  $2^{g-1}+g= n-3$\\
In this case, by (i),  $p_g=3$. Also, 
\begin{align*}
 p_{g-1}=\lceil \frac{n-g+1+2^{g-1}}{1+2^{(g-2)}} \rceil=\lceil \frac{2^{g-1}+3+1+2^{g-1}}{1+2^{(g-2)}} \rceil=\lceil \frac{4+2^{g}}{1+2^{(g-2)}}\rceil=\lceil \frac{4(1+2^{g-2})}{1+2^{(g-2)}} \rceil=4.
\end{align*}
Hence $p_{g-1}-p_{g}=1$.\\

\textbf{Case-II:} $2^{g-1}+g< n-3$ \\
Let $q_g=\frac{n-g+2^g}{1+2^{(g-1)}}$. Then 
\begin{align*}
 q_{g-1}-q_g&= \frac{n-g+1+2^{g-1}}{1+2^{g-2}}-\frac{n-g+2^g}{1+2^{g-1}}\\
            &=\frac{(n-g)(1+2^{g-1})+(1+2^{g-1})^2-(n-g)(1+2^{g-2})-2^g(1+2^{g-2})}{(1+2^{g-2})(1+2^{g-1})}\\
            &=\frac{(n-g)(2^{g-1}-2^{g-2})+1}{(1+2^{g-2})(1+2^{g-1})}\\
            &=\frac{2^{g-2}(n-g)+1}{(1+2^{g-2})(1+2^{g-1})}\\
            &>\frac{2^{g-2}(3+2^{g-1})+1}{(1+2^{g-2})(1+2^{g-1})}\;\;\;(\mbox{as $2^{g-1}+g< n-3$})\\
            &=\frac{2^{g-2}(3+2^{g-1})+1}{(1+2^{g-1}+2^{g-2}+2^{g-1}2^{g-2})}\\
            &=\frac{2^{g-2}(3+2^{g-1})+1}{1+2^{g-2}(3+2^{g-1})}\\
            &=1.
\end{align*}
So $p_{g-1}-p_{g}=\lceil q_{g-1} \rceil-\lceil q_g \rceil \geq \lceil q_{g-1}-q_{g}\rceil-1\geq 1$ as $q_{g-1}-q_g > 1$. This proves the result.
\end{proof}

\begin{theorem}\label{thm:cc2}
Let T be a tree on $n\geq 6$ vertices. Let $g_0\geq 2$ be the smallest positive integer such that $2^{g_0-1}+g_0\geq n-3$. Then  $d_T(C,\mathfrak{C})\leq \lfloor \frac{n-g_0-4}{2} \rfloor$ and the equality happens if $T$ is isomorphic to $P_{n-g_0,g_0}$.
\end{theorem}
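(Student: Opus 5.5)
By Theorem \ref{thm:cc1}, it suffices to maximize $d_{P_{n-g,g}}(C,\mathfrak{C})$ over $g$. The endpoint cases $g=1$ and $g=n-2$ give the path and the star. There all central parts coincide, so the distance is $0$. Hence we only need $2\le g\le n-3$. For such $g$, Proposition \ref{prop3} places $\mathfrak{C}(P_{n-g,g})$ on the path from $2$ to $C(P_{n-g,g})$. By Lemma \ref{lem2.1}, the vertex of the core center nearest the center is $p_g$. By Lemma \ref{lem2.2}, the vertex of the center nearest the core center is $\lceil \frac{n-g+1}{2}\rceil$, which covers both parities of $n-g$. Therefore
\[
d_{P_{n-g,g}}(C,\mathfrak{C})=\max\Bigl\{0,\ \Bigl\lceil \tfrac{n-g+1}{2}\Bigr\rceil-p_g\Bigr\}.
\]

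Now split according to $g_0$. For $g\ge g_0$, Lemma \ref{lem:cc1}(i) gives $p_g=3$, since $2^{g-1}+g$ is increasing in $g$. So the distance is $\lceil\frac{n-g+1}{2}\rceil-3=\lfloor\frac{n-g-4}{2}\rfloor$ when this is nonnegative. It is nonincreasing in $g$ and hence maximized at $g=g_0$, with value $\lfloor\frac{n-g_0-4}{2}\rfloor$. This is the equality case, realized by $P_{n-g_0,g_0}$.

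For $2\le g<g_0$, we have $2^{g-1}+g<n-3$. Lemma \ref{lem:cc1}(ii) then applies to every index from $g+1$ up to $g_0$, since $2^{h-1}+h\le n-3$ for $h\le g_0-1$, and at $h=g_0$ we need $2^{g_0-1}+g_0\le n-3$. This last condition holds only when equality occurs, so it must be handled carefully. Telescoping over $h=g+1,\dots,g_0-1$ gives $p_g\ge p_{g_0-1}+(g_0-1-g)$. For the last step, minimality of $g_0$ gives $2^{g_0-2}+g_0-1<n-3$, so $q_{g_0-1}>3$ and hence $p_{g_0-1}\ge4$. Together, $p_g\ge 3+(g_0-g)$.

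The key estimate is then
\[
\Bigl\lceil \tfrac{n-g+1}{2}\Bigr\rceil-p_g\le \Bigl\lceil \tfrac{n-g+1}{2}\Bigr\rceil-3-(g_0-g),
\]
and we must check this is at most $\lceil\frac{n-g_0+1}{2}\rceil-3$. Write $k=g_0-g\ge1$. We need $\lceil\frac{n-g_0+1+k}{2}\rceil-k\le\lceil\frac{n-g_0+1}{2}\rceil$. This holds because adding $k$ to the numerator raises the ceiling by at most $\lceil k/2\rceil\le k$. So every $g$ gives distance at most $\lfloor\frac{n-g_0-4}{2}\rfloor$. The value is nonnegative, since $n\ge6$ forces $g_0\le n-4$ (for example, $n=6$ gives $g_0=2$). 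Combined with Theorem \ref{thm:cc1}, this proves the bound.

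The main obstacle is the bookkeeping in the range $g<g_0$. First, Lemma \ref{lem:cc1}(ii) must be applied with the correct hypothesis at each index. Second, the boundary step $p_{g_0-1}\ge 4$ has to be derived directly from the minimality of $g_0$, via the bound $q_{g_0-1}>3$ computed as in Lemma \ref{lem:cc1}(i), rather than from (ii). I would verify that strict inequality explicitly first.
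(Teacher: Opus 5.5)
Your proposal is correct and follows the paper's proof. Both reduce to path-star trees via Theorem \ref{thm:cc1}, use $p_g=3$ for $g\ge g_0$, and use $p_g\ge 3+(g_0-g)$ for $g<g_0$ to show the distance cannot exceed $\lfloor\frac{n-g_0-4}{2}\rfloor$. Your separate derivation of $p_{g_0-1}\ge 4$ from the minimality of $g_0$ is a genuine improvement: Lemma \ref{lem:cc1}(ii) at index $g_0$ needs $2^{g_0-1}+g_0\le n-3$, which holds only in the equality case, and the paper leaves this step implicit when it simply cites Lemma \ref{lem:cc1}.
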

\begin{proof}
Let $T$ be a tree on $n\geq 6$ vertices. By Theorem \ref{thm:cc1}, $d_T(C,\mathfrak{C})\leq d_{P_{n-g,g}}(C,\mathfrak{C})$ for some $g\in \{2,3,\ldots,n-3\}$.  
Let $g_0\geq 2$ be the smallest positive integer such that $2^{g_0-1}+g_0\geq n-3$. Consider the path-star tree $P_{n-g_0,g_0}$. Then 
\begin{equation*}
C(P_{n - g_0, g_0}) =
\begin{cases}
\left\{\frac{n - g_0 + 2}{2}\right\}, &\text{if $n - g_0$ is even,}\\
\left\{\frac{n - g_0 + 1}{2}, \frac{n - g_0 + 3}{2}\right\}, &\text{if $n - g_0$ is odd}
\end{cases}
\end{equation*}
 and by Lemma \ref{lem2.1} and Lemma \ref{lem:cc1}(i), $\mathfrak{C}(P_{n-g_0,g_0})=\{2,3\}$ or $\{3\}$. Thus, we have  $d_{P_{n - g_0, g_0}}(C,\mathfrak{C})=\lfloor \frac{n - g_0 + 2}{2}\rfloor-3=\lfloor\frac{n - g_0 -4}{2}\rfloor$.
 
Let $k\geq 1$ be an integer such that $g_0+k\leq n-3$. Clearly $2^{g_0+k-1}+g_0+k> n-3$. By Lemma \ref{lem:cc1}(i), we have $d_{P_{n - g_0-k, g_0+k}}(C,\mathfrak{C})=\lfloor \frac{n - g_0-k + 2}{2}\rfloor-3=\lfloor\frac{n - g_0 -k-4}{2}\rfloor \leq \lfloor\frac{n - g_0 -4}{2}\rfloor= d_{P_{n - g_0, g_0}}(C,\mathfrak{C}).$

Now let $k'\geq 1$ be an integer such that $g_0-k'\geq 2$. Clearly $2^{g_0-k'-1}+g_0-k'< n-3$. By Lemma \ref{lem:cc1}, $ p_{g_0-k'}\geq k'+3$. Thus, we have $d_{P_{n - g_0+k', g_0-k'}}(C,\mathfrak{C})\leq \lfloor \frac{n - g_0+k' + 2}{2}\rfloor-(k'+3)=\lfloor\frac{n - g_0 -k'-4}{2}\rfloor \leq \lfloor\frac{n - g_0 -4}{2}\rfloor= d_{P_{n - g_0, g_0}}(C,\mathfrak{C}).$ 

Hence, among all path-star trees on $n\geq 6$ vertices, $P_{n - g_0, g_0}$ maximizes the distance between center and core center and $d_{P_{n - g_0, g_0}}(C,\mathfrak{C})= \lfloor \frac{n-g_0-4}{2} \rfloor$. This proves the result.
\end{proof}

\subsection{Centroid and core center}

Note that for a tree $T$ on $n$ vertices, $u\in C_d(T)$ if and only if $\omega(u)\leq \frac{n}{2}$ (see \cite{Ka}, Theorem 1). Also, if $C_d(T)=\{u,v\}$ then $n$ is even and $\omega(u)=\omega(v)=\frac{n}{2}$. We start this subsection with the following result.
\begin{theorem}\label{thm:cd1}
Let T be a tree on $n\geq 6$ vertices. Then  $d_T(C_d,\mathfrak{C})\leq d_{P_{n-g,g}}(C_d,\mathfrak{C})$ for some positive integer $g$.
\end{theorem}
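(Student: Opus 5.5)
We mimic the proof of Theorem \ref{thm:cc1}, replacing the center by the centroid and checking that each perturbation does not move the centroid towards the core center. Let $\mathfrak{C}(T)=\{w,u\}$ and $C_d(T)=\{x,y\}$, with $w=u$ and $x=y$ in the singleton cases. Assume $d_T(C_d,\mathfrak{C})=d_T(u,x)\geq 1$, since the claim is trivial otherwise. Take a path $P:z_0z_1\cdots z_k$ in $T$ that contains $\mathfrak{C}(T)$ and $C_d(T)$, is oriented so that $z_0$ is on the core center side, and ends at pendant vertices. Write $u=z_i$ and $x=z_j$ with $1\leq i<j<k$; this uses Remark \ref{reccentric} and the fact that the centroid contains no pendant vertex. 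Then $z_{i+1}\notin\mathfrak{C}(T)$, so $\epsilon_T(u)>\epsilon_T(z_{i+1})$. Let $X$ and $Y$ be the components of $T-uz_{i+1}$ containing $u$ and $z_{i+1}$.

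\textbf{First step.} Form $T'$ by keeping only the $z_{i+1}$-$z_k$ path in $Y$ and attaching the deleted vertices as pendant vertices at $z_{k-1}$. By Proposition \ref{prop4}, $\epsilon_{T'}(u)>\epsilon_{T'}(z_{i+1})$. Corollary \ref{cor1} then shows that $\mathfrak{C}(T')$ lies in $X\cup\{u\}$, on the side of $u$ away from $z_{i+1}$. For the centroid, note that $|V(X)|$ and $|V(Y)|$ are unchanged. The weight of each vertex $z_l$ with $l\leq j$, measured by the branch containing $z_{l+1}$, is at least the corresponding branch size in $T$. This holds because every vertex of $Y$ that is not on the path now sits beyond $z_{k-1}$, hence in that branch.

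We therefore argue via the characterization $u\in C_d(T)$ iff $\omega(u)\leq n/2$. A centroid vertex $c$ of $T'$ satisfies one of two conditions: it lies on the $z_{i+1}$-$z_k$ path with at most $n/2$ vertices beyond it, or it lies in $X$. The second case is impossible, because the branch at such $c$ containing $Y$ is at least as large as in $T$, where it already exceeded $n/2$. That exceedance follows since $x$ is on the $Y$ side and is the centroid. In the first case, for $c=z_l$ with $l<j$, the branch beyond $z_l$ contains the branch beyond $z_j$ in $T$ together with the path vertices, so it has more than $n/2$ vertices. Hence $C_d(T')$ is $C_d(T)$ or lies further towards $z_k$. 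This gives $d_{T'}(C_d,\mathfrak{C})\geq d_T(C_d,\mathfrak{C})$. Iterating, we may assume $Y\cong P_{m-h,h}$ with initial vertex $z_{i+1}$.

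\textbf{Second step.} Let $\epsilon_T(z_{i+1})=f_T(z_{i+1},s)$, so $s\in X$ by Lemma \ref{lem2}. Form $\hat T$ by keeping only the $u$-$s$ path in $X$ and attaching the remaining vertices as pendant vertices at the neighbour of the far pendant vertex of $Y$, namely at $z_{k-1}$. Then $\hat T$ is a path-star tree. Proposition \ref{prop5} gives $\epsilon_{\hat T}(u)>\epsilon_{\hat T}(z_{i+1})$, so the core center stays on the $u$ side. The centroid can only move towards $z_k$, since vertices are transferred from $X$ into the branch beyond every $z_l$ with $l\geq i+1$. The same weight comparison as in the first step applies, and it yields $d_{\hat T}(C_d,\mathfrak{C})\geq d_T(C_d,\mathfrak{C})$.

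\textbf{Main obstacle.} The main obstacle is the centroid bookkeeping. Unlike the center, the centroid is sensitive to where vertices are placed, so one must check carefully that no vertex strictly between $u$ and $x$ becomes a centroid vertex. One must also handle the two-vertex centroid case with $n$ even, where $\omega=n/2$ exactly. I would handle this by comparing, for each $z_l$ with $i\leq l<j$, the size of its branch towards $z_k$ before and after the move. These sizes are nondecreasing, and they already exceed $n/2$ in $T$ because $z_l\notin C_d(T)$ and its heavy branch in $T$ is the one towards $x$.
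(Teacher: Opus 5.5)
Your proof is correct, but it parks the surplus vertices somewhere different from the paper.

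\textbf{Your route.} You transplant the proof of Theorem~\ref{thm:cc1} unchanged. In both steps you hang the deleted vertices of $Y$, and then of $X$, as pendants at $z_{k-1}$, the neighbour of the far leaf of a leaf-to-leaf path through both central sets. The cost is the monotonicity bookkeeping. For every vertex of $X$ and every $z_l$ with $i+1\le l\le j-1$, the component towards $x$ has more than $n/2$ vertices in $T$, because such a vertex is not a centroid vertex. Neither move decreases that component. So $C_d$ of the new tree stays in $\{z_j,\dots,z_{k-1}\}$, while Propositions~\ref{prop4} and \ref{prop5} keep $\mathfrak{C}$ inside $X$.

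\textbf{The paper's route.} The paper keeps only the path from $v$ to a neighbour $r$ of the far centroid vertex $y$. In both steps it hangs all removed vertices on $y$ itself. Then $\omega(y)$ can only decrease, so $y\in C_d$ is preserved in one line and no branch-size comparison along the path is needed. It only tacitly uses that a possible second centroid vertex of the new tree is not nearer to $u$.

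\textbf{What each buys.} Your route uses the same perturbation for Theorem~\ref{thm:cc1} and this theorem. It also controls the whole centroid explicitly, including the two-vertex case with $n$ even. The paper's route is shorter.

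One small point of exposition. Your sentence that the branch beyond $z_l$ ``contains the branch beyond $z_j$ in $T$ together with the path vertices, so it has more than $n/2$ vertices'' does not justify the claim by itself. The branch beyond the centroid vertex $z_j$ has at most $n/2$ vertices. The correct reason is the one in your last paragraph: the component of $T-z_l$ towards $x$ already exceeds $n/2$ and only grows.
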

\begin{proof}
Let $T$ be a tree of on $n\geq 6$ vertices. Let $\mathfrak{C}(T)=\{w,u\}$ and $C_d(T)=\{x,y\}$ where $w=u$ if $|\mathfrak{C}(T)|=1$ and $x=y$ if $|C_d(T)|=1$.  The result is obvious if $d_T(C_d,\mathfrak{C})=0$, so assume that  $d_T(C_d,\mathfrak{C})=d_T(u,x)\geq 1$.

We have $\omega_T(y)\leq \frac{n}{2}$ as $y\in C_d(T)$(here $y=x$ if $|C_d(T)|=1$). Let $B_1,B_2,\ldots,B_k$ be the branches of $T$ at $u$ and let the branch $B_1$ contains $C_d(T)$. Let $v$ be the vertex of $B_1$ adjacent to $u$. Clearly, $v\notin \mathfrak{C}(T)$ and so $\epsilon_T(u)>\epsilon_T(v)$. Let $X$ and $Y$ be the components of $T-uv$ containing $u$ and $v$, respectively. Let $r$ be a vertex in $Y$ adjacent to $y$ but not on the $v$-$y$ path. Form  a new tree $T'$ from $T$ by deleting all the vertices of $Y$ except the vertices that lie on the $v$-$r$ path and adding the same number of pendant vertices to the vertex $y$. Note that if $Y=P_{m-h,h}\;(h\geq 1)$ then $T'=T.$ By Proposition \ref{prop4}, $\epsilon_{T'}(u)>\epsilon_{T'}(v)$ and so $\mathfrak{C}(T)=\mathfrak{C}(T')$ or $\mathfrak{C}(T')$ moves away from the vertex $y$. In the new tree $T'$, $\omega_{T'}(y)\leq \frac{n}{2}$, so $y\in C_d(T')$. Thus,  $d_{T'}(C_d,\mathfrak{C})\geq d_T(u,x)=d_T(C_d,\mathfrak{C})$.

Take the new tree $T'$ as $T$ and continue the same process, till the component $Y$ of $T-uv$ become a path-star tree with $v$ as the initial vertex. Now consider the component $Y$ of $T-uv$, which is a path-star tree. Let  $\epsilon_{T}(v)=f_T(v,s)$. Then by Lemma \ref{lem2}, $s$ is in $X$. If  the $u$-$s$ path is the whole component $X$, then $T$ is a path-star tree. Otherwise, construct a new tree $\hat{T}$ from $T$ by deleting all the vertices of $X$ except the vertices that lie on the $u$-$s$ path and adding the same number of pendant vertices to the vertex $y$. Now the new tree $\hat{T}$ is a path-star tree. By Proposition \ref{prop5}, $\epsilon_{\hat{T}}(u)>\epsilon_{\hat{T}}(v)$ and so $\mathfrak{C}(\hat{T})=\mathfrak{C}(T)$ or $\mathfrak{C}(\hat{T})$ moves away from the vertex $y$. In the new tree $\hat{T}$, $\omega_{\hat{T}}(y)\leq \frac{n}{2}$, so $y\in C_d(\hat{T})$. Thus, $d_{\hat{T}}(C_d,\mathfrak{C})\geq d_T(C_d,\mathfrak{C})$ and this proves the result.
\end{proof}

\begin{lemma}\label{lem:cd}
Let $n\geq 3$ be an integer.  Then $2^{\lfloor\frac{n}{2}\rfloor-1}+\lfloor\frac{n}{2}\rfloor> n-3$.
\end{lemma}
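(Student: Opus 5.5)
Write $m=\lfloor\frac{n}{2}\rfloor$. We must show $2^{m-1}+m>n-3$. Since $n\leq 2m+1$, it suffices to prove
\[
2^{m-1}+m>2m-2,\qquad\text{equivalently}\qquad 2^{m-1}>m-2 .
\]
This replaces the parity-dependent statement by a single inequality in $m\geq 1$, because $n\geq 3$ forces $m\geq 1$.

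The inequality $2^{m-1}>m-2$ is elementary. For $m=1,2$ the right side is nonpositive while the left side is positive, so it holds. For $m\geq 2$ we argue by induction: if $2^{m-1}>m-2$, then
\[
2^{m}=2\cdot 2^{m-1}>2(m-2)\geq m-1
\]
whenever $m\geq 3$. The cases $m=2$ going to $m=3$ are covered directly, since $2^2=4>1$. Alternatively, one can invoke the standard bound $2^{k}\geq k+1$ for $k=m-1\geq 0$, which gives $2^{m-1}\geq m>m-2$ immediately. I would use this shortcut, as it removes the induction entirely.

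There is no real obstacle here. The only point needing care is the reduction from $n$ to $m$ through $n-3\leq 2m-2$, which covers both parities at once: for even $n$ we have $n-3=2m-3$, and for odd $n$ we have $n-3=2m-2$.

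The purpose of the lemma, presumably, is that in the path-star tree $P_{n-g,g}$ with $g=\lfloor n/2\rfloor$, Lemma \ref{lem:cc1}(i) then gives $p_g=3$. This places the core center at $\{3\}$ or $\{2,3\}$, while Lemma \ref{lem2.3} places the centroid near $n-g$.
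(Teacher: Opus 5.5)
Your proof is correct and is essentially the paper's argument. The paper writes $2^{\lfloor n/2\rfloor-1}+\lfloor n/2\rfloor-n+3$ explicitly for even and odd $n$ and concludes from $2^x>x$, while you merge the two parities via $n-3\le 2m-2$ and finish with $2^{m-1}\ge m$. One correction to your closing remark: in the paper the lemma serves only to give $g_0\le\lfloor n/2\rfloor$ by the minimality of $g_0$, which is what allows Lemma~\ref{lem2.3} to be applied in Theorem~\ref{thm:cd2}.
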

\begin{proof}
We have 
$$2^{\lfloor\frac{n}{2}\rfloor-1}+\lfloor \frac{n}{2} \rfloor - n + 3= \left\{
\begin{array}{ll}
  2^{\frac{n-2}{2}}-\frac{n-2}{2}+2 ,  &\text{if $n$ is even,}\\
  2^{\frac{n-3}{2}}-\frac{n-3}{2}+1, &\text{if $n$ is odd.}\\
\end{array}
\right.$$
Since $2^x>x$ for all $x\geq 0$, the result follows.
\end{proof}
\begin{theorem}\label{thm:cd2}
Let T be a tree on $n\geq 6$ vertices. Let $g_0\geq 2$ be the smallest positive integer such that $2^{g_0-1}+g_0\geq n-3$. Then  $d_T(C_d,\mathfrak{C})\leq \lfloor \frac{n-5}{2} \rfloor$ and the equality happens if $T$ is isomorphic to $P_{n-g_0,g_0}$.
\end{theorem}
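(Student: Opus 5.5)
By Theorem \ref{thm:cd1}, it suffices to bound $d_{P_{n-g,g}}(C_d,\mathfrak{C})$ over $g\in\{2,\ldots,n-3\}$; the cases $g=1$ and $g=n-2$ give a path and a star, for which the distance is $0$. The plan has three parts: compute this distance explicitly from Lemma \ref{lem2.3} and Lemma \ref{lem2.1}, maximize it over $g$ using Lemma \ref{lem:cc1}, and then check that $g=g_0$ attains the bound.

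First I fix the relative position of the two parts. By Proposition \ref{prop3}, $\mathfrak{C}(P_{n-g,g})$ lies on the path from $2$ to $C(P_{n-g,g})$. The centroid lies further along, at $\lceil\frac{n}{2}\rceil$ or $\{\frac n2,\frac n2+1\}$ when $g$ is small, and at $n-g$ when $g$ is large. Hence the distance is the smaller centroid index minus the larger core index. That larger core index is $p_g$ in both cases of Lemma \ref{lem2.1}.

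Next I handle $g\ge g_0$. By Lemma \ref{lem:cc1}(i), $p_g=3$ in this range. If $g\le \lfloor\frac{n-1}{2}\rfloor$ (odd $n$) or $g\le\frac n2-1$ (even $n$), then the smaller centroid index is $\lfloor\frac{n+1}{2}\rfloor$ or $\frac n2$, respectively. In both cases this equals $\lfloor \frac{n+1}{2}\rfloor$ for odd $n$ and $\frac n2$ for even $n$, so the distance is $\lfloor\frac{n+1}{2}\rfloor-3$ for odd $n$ and $\frac n2-3$ for even $n$; in each parity this is $\lfloor\frac{n-5}{2}\rfloor$. If instead $g$ is larger, the centroid is $\{n-g\}$ and the distance is $n-g-3$. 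Since $g$ now exceeds roughly $n/2$, this is again at most $\lfloor\frac{n-5}{2}\rfloor$. This last point needs a small parity check at the threshold.

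Then I handle $g<g_0$. By Lemma \ref{lem:cd}, $g_0\le\lfloor\frac n2\rfloor$, so every $g<g_0$ satisfies $g\le\lfloor\frac n2\rfloor-1$, and the centroid is the middle one computed above. By Lemma \ref{lem:cc1}(ii), iterating downward from $g_0$ gives $p_g\ge 3+(g_0-g)\ge 4$. So the distance is strictly smaller than $\lfloor\frac{n-5}{2}\rfloor$ whenever $g<g_0$.

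Finally, I need equality at $g=g_0$. This requires $g_0$ to fall into the middle-centroid regime, i.e. $g_0\le\frac{n-1}{2}$ for odd $n$ and $g_0\le\frac n2-1$ for even $n$. Lemma \ref{lem:cd} gives only $g_0\le\lfloor n/2\rfloor$, so the strict inequality there must be exploited, or $\lfloor n/2\rfloor -1$ tested directly, to get $2^{\lfloor n/2\rfloor-2}+\lfloor n/2\rfloor-1\ge n-3$ for $n\ge6$; small $n$ I would verify by hand. If in some case $g_0$ turns out to be exactly $n/2$, then the distance there is $n-g_0-3=\frac n2-3$, which still equals $\lfloor\frac{n-5}{2}\rfloor$. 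So equality holds either way, and this boundary bookkeeping is the main obstacle. Combined with Theorem \ref{thm:cd1}, this proves the bound together with its attainment by $P_{n-g_0,g_0}$.
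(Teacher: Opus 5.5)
Your proposal is correct and follows essentially the paper's route: reduce to path-star trees by Theorem~\ref{thm:cd1}, get $p_g=3$ for $g\ge g_0$ and $p_g\ge 4$ for $g<g_0$ from Lemma~\ref{lem:cc1}, and use Lemma~\ref{lem:cd} to locate the centroid of $P_{n-g_0,g_0}$. No extra work is needed at that last step: for odd $n$, $\lfloor \frac n2\rfloor=\frac{n-1}{2}$ already lies in the middle regime, and for even $n$ your boundary case $g_0=\frac n2$ is exactly the paper's second subcase.

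One small repair: the first downward step $p_{g_0-1}\ge p_{g_0}+1$ is not covered by Lemma~\ref{lem:cc1}(ii). At $g=g_0$ its hypothesis reads $2^{g_0-1}+g_0\le n-3$, which fails unless $2^{g_0-1}+g_0=n-3$. However, $p_g\ge 4$ for $2\le g<g_0$ follows directly from $\frac{n-g+2^g}{1+2^{g-1}}>3\iff 2^{g-1}+g<n-3$, which is the converse of Lemma~\ref{lem:cc1}(i). The paper's proof glosses over the same point.
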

\begin{proof}
Let $T$ be a tree on $n\geq 6$ vertices. By Theorem \ref{thm:cd1}, $d_T(C_d,\mathfrak{C})\leq d_{P_{n-g,g}}(C_d,\mathfrak{C})$ for some $g\in \{2,3,\ldots,n-3\}$.  
Let $g_0\geq 2$ be the smallest positive integer such that $2^{g_0-1}+g_0\geq n-3$. Then by Lemma \ref{lem:cd}, $g_o\leq \lfloor \frac{n}{2} \rfloor$. Consider the path-star tree $P_{n-g_0,g_0}$. There are two cases based on whether $n$ is odd or even.

\textbf{Case-I:} $n$ is odd.\\
Since $g_o\leq \lfloor \frac{n}{2} \rfloor$, by Lemma \ref{lem2.3},  $C_d(P_{n-g_0,g_0})=\{\frac{n+1}{2}\}$. Also by Lemma \ref{lem2.1} and Lemma \ref{lem:cc1}(i), $\mathfrak{C}(P_{n-g_0,g_0})=\{2,3\}$ or $\{3\}$. Thus, $d_{P_{n - g_0, g_0}}(C_d,\mathfrak{C})= \frac{n+1}{2} - 3=\frac{n-5}{2}.$

Let $k\geq 1$ be an integer such that $g_0+k\leq n-3$. Clearly $2^{g_0+k-1}+g_0+k> n-3$. By Lemma \ref{lem2.3}, we have 
$$C_d(P_{n-g_0-k,g_0+k})=\left\{
\begin{array}{ll}
  \{\frac{n+1}{2}\},  &\text{if $g_0+k\leq \frac{n-1}{2}$,}\\
  \{n-g_0-k\}, &\text{if $g_0+k> \frac{n-1}{2}$.}\\
\end{array}
\right.$$ 
Then by Lemma \ref{lem:cc1}(i), $d_{P_{n - g_0-k, g_0+k}}(C_d,\mathfrak{C})\leq \frac{n+1}{2} - 3=\frac{n-5}{2}.$

Now let $k'\geq 1$ be an integer such that $g_0-k'\geq 2$. Clearly $2^{g_0-k'-1}+g_0-k'< n-3$. By Lemma \ref{lem2.3}, we have $C_d(P_{n-g_0+k',g_0-k'})=\{\frac{n+1}{2}\}$. By Lemma \ref{lem:cc1}, $ p_{g_0-k'}\geq k'+3$. Thus, we have $d_{P_{n - g_0+k', g_0-k'}}(C_d,\mathfrak{C})\leq \frac{n+1}{2}-k'-3<\frac{n-5}{2}=d_{P_{n - g_0, g_0}}(C_d,\mathfrak{C})$.\\

\textbf{Case-II:} $n$ is even.\\
Since $g_o\leq \lfloor \frac{n}{2} \rfloor$, by Lemma \ref{lem2.3}
$$C_d(P_{n-g_0,g_0})=\left\{
\begin{array}{ll}
  \{\frac{n}{2},\frac{n}{2}+1 \},  &\text{if $g_0\leq \frac{n}{2}-1$,}\\
  \{\frac{n}{2}\}, &\text{if $g_0 =\frac{n}{2}$.}\\
\end{array}
\right.$$ 
Thus, $d_{P_{n - g_0, g_0}}(C_d,\mathfrak{C})= \frac{n}{2} - 3=\frac{n-6}{2}=\lfloor \frac{n-5}{2} \rfloor.$

Let $k\geq 1$ be an integer such that $g_0+k\leq n-3$. By Lemma \ref{lem2.3}, we have 
$$C_d(P_{n-g_0-k,g_0+k})=\left\{
\begin{array}{ll}
 \{\frac{n}{2},\frac{n}{2}+1 \},  &\text{if $g_0+k\leq \frac{n}{2}-1$,}\\
  \{n-g_0-k\}, &\text{if $g_0+k> \frac{n}{2}-1$.}\\
\end{array}
\right.$$ 
Then by Lemma \ref{lem:cc1}(i), $d_{P_{n - g_0-k, g_0+k}}(C_d,\mathfrak{C})\leq \frac{n}{2} - 3=\frac{n-6}{2}.$

Now let $k'\geq 1$ be an integer such that $g_0-k'\geq 2$. By Lemma \ref{lem2.3}, we have $C_d(P_{n-g_0+k',g_0-k'})=\{\frac{n}{2},\frac{n}{2}+1 \}.$ Also, by Lemma \ref{lem:cc1}, $ p_{g_0-k'}\geq k'+3$. Thus, $d_{P_{n - g_0+k', g_0-k'}}(C_d,\mathfrak{C})\leq \frac{n}{2}-k'-3<\frac{n-6}{2}=d_{P_{n - g_0, g_0}}(C_d,\mathfrak{C})$. This proves the result.
\end{proof}

\subsection{Subtree core and core center}

The following lemma compares the subtree core of two trees and it is useful for tackling the problem of maximizing the distance between subtree core and core center over trees on $n$ vertices.

\begin{lemma}[\cite{Des}, Lemma 2.2]\label{lem:sc1}
Let $T$ be a tree, $v\in S_c(T)$ and $y$ be a pendant vertex of $T$ not adjacent to $v$. If $\tilde{T}$ is the tree obtained by detaching $y$ from $T$ and adding it as a pendant vertex adjacent to $v$, then $S_c(\tilde{T}) = \{v\}.$
\end{lemma}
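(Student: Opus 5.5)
The plan is to compare $f_{\tilde T}(v)$ with $f_{\tilde T}(u)$ for every vertex $u\neq v$ of $\tilde T$, and to reduce everything to counts in the common tree $T_0=T-y=\tilde T-y$. Let $w$ be the neighbour of $y$ in $T$. By hypothesis $w\neq v$, so $T_0$ has at least two vertices. A subtree of $T$ containing a vertex $x\neq y$ either avoids $y$, or is obtained from a subtree of $T_0$ containing $x$ and $w$ by adding $y$. Hence
$$f_T(x)=f_{T_0}(x)+f_{T_0}(x,w), \qquad f_{\tilde T}(x)=f_{T_0}(x)+f_{T_0}(x,v)\quad (x\neq y).$$
In particular $f_{\tilde T}(v)=2f_{T_0}(v)$. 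For the moved vertex itself we get $f_{\tilde T}(y)=1+f_{T_0}(v)<2f_{T_0}(v)$, since $f_{T_0}(v)\geq 2$ when $T_0$ has an edge. So $y\notin S_c(\tilde T)$, and it remains to treat $u\in V(T_0)\setminus\{v\}$.

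For such $u$, split counts according to whether the other vertex is present: $f_{T_0}(u)=f_{T_0}(u,\overline v)+f_{T_0}(u,v)$ and $f_{T_0}(v)=f_{T_0}(v,\overline u)+f_{T_0}(u,v)$. The desired strict inequality $f_{\tilde T}(u)<f_{\tilde T}(v)$, namely $f_{T_0}(u)+f_{T_0}(u,v)<2f_{T_0}(v)$, then becomes, after cancelling $2f_{T_0}(u,v)$,
$$f_{T_0}(u,\overline v)<2f_{T_0}(v,\overline u).$$

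Next I would use the hypothesis $v\in S_c(T)$, that is, $f_T(v)\geq f_T(u)$. By the first displayed identity this reads $f_{T_0}(v)+f_{T_0}(v,w)\geq f_{T_0}(u)+f_{T_0}(u,w)$. Splitting each term again by presence of the other vertex, the common terms $f_{T_0}(u,v)$ and $f_{T_0}(u,v,w)$ cancel, leaving
$$f_{T_0}(u,\overline v)+f_{T_0}(u,w,\overline v)\leq f_{T_0}(v,\overline u)+f_{T_0}(v,w,\overline u).$$
It then suffices to show $f_{T_0}(v,w,\overline u)<f_{T_0}(v,\overline u)$, since $f_{T_0}(u,w,\overline v)\geq 0$ and this would give $f_{T_0}(u,\overline v)<2f_{T_0}(v,\overline u)$. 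The inequality $\leq$ is immediate, because subtrees containing $v$ and $w$ and avoiding $u$ form a subfamily of those containing $v$ and avoiding $u$. It is strict because the one-vertex subtree $\{v\}$ contains $v$, avoids $u$, and does not contain $w$ (as $w\neq v$).

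I expect the only delicate point to be this bookkeeping: getting the exact decomposition so that the hypothesis lines up with the target inequality, and locating the source of strictness, which is the singleton $\{v\}$ and relies on $y$ not being adjacent to $v$. Once that is in place, $f_{\tilde T}(v)>f_{\tilde T}(u)$ holds for every $u\neq v$, so $S_c(\tilde T)=\{v\}$.
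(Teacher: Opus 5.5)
The paper gives no proof of this lemma; it is quoted from \cite{Des}, so there is no in-paper argument to compare yours against. Your proof is correct and complete. Each step checks out:
\begin{itemize}
\item The identities $f_T(x)=f_{T_0}(x)+f_{T_0}(x,w)$ and $f_{\tilde T}(x)=f_{T_0}(x)+f_{T_0}(x,v)$ for $x\neq y$ are right.
\item The treatment of $y$ via $f_{\tilde T}(y)=1+f_{T_0}(v)<2f_{T_0}(v)$ is right.
\item Both reductions are exact identities rather than estimates. The target becomes $f_{T_0}(u,\overline{v})<2f_{T_0}(v,\overline{u})$. The hypothesis $f_T(v)\geq f_T(u)$ becomes $f_{T_0}(u,\overline{v})+f_{T_0}(u,w,\overline{v})\leq f_{T_0}(v,\overline{u})+f_{T_0}(v,w,\overline{u})$.
\end{itemize}

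Two small points are worth making explicit. First, the case $u=w$ is covered by the same computation: then $f_{T_0}(v,w,\overline{u})=0$, and strictness still comes from the singleton $\{v\}$. Second, you tacitly assume $y\neq v$, which is implicit in the statement.

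A common way to prove statements of this kind is local. One shows $f_{\tilde T}(v)>f_{\tilde T}(u)$ only for neighbours $u$ of $v$, using the edge identity $f(a)-f(b)=f(a,\overline{b})-f(b,\overline{a})$. One then invokes the strict concavity of $f$ along paths (Sz\'ekely--Wang) to pass from local to global uniqueness.

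Your argument instead compares $v$ with an arbitrary vertex $u$ in one stroke. So it needs neither adjacency of $u$ and $v$ nor the concavity theorem, and is fully self-contained. The only extra cost is the three-way split according to the presence of $u$, $v$ and $w$. It also makes transparent that the hypothesis $w\neq v$ is used exactly once: to produce the singleton $\{v\}$ that supplies strictness.
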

\begin{theorem}\label{thm:sc1}
Let T be a tree on $n\geq 6$ vertices. Then  $d_T(S_c,\mathfrak{C})\leq d_{P_{n-g,g}}(S_c,\mathfrak{C})$ for some positive integer $g$.
\end{theorem}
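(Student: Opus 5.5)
We follow the template of Theorems \ref{thm:cc1} and \ref{thm:cd1}, this time using Lemma \ref{lem:sc1} to keep track of the subtree core. Let $\mathfrak{C}(T)=\{w,u\}$ and $S_c(T)=\{x,y\}$, with the usual conventions when either set is a singleton. We may assume $d_T(S_c,\mathfrak{C})=d_T(u,x)\geq 1$, since otherwise there is nothing to prove. Let $v$ be the neighbour of $u$ on the $u$-$x$ path. Then $v\notin\mathfrak{C}(T)$, so $\epsilon_T(u)>\epsilon_T(v)$. Let $X$ and $Y$ be the components of $T-uv$ containing $u$ and $v$; both $x$ and $y$ lie in $Y$. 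Neither $S_c(T)$ nor $\mathfrak{C}(T)$ contains a pendant vertex, so $x$ is an internal vertex of $Y$. Choose a pendant vertex $r$ of $T$ in $Y$ whose $v$-$r$ path passes through $x$, taking it beyond $x$ on the side away from $u$. Let $y'$ be the neighbour of $r$ on this path; the choice of $r$ will be tuned so that the relocation below does not disturb the subtree core.

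\textbf{First reduction.} Apply Proposition \ref{prop4} to the edge $uv$, using the $v$-$r$ path, with the displaced vertices of $Y$ re-attached as pendant vertices at $y'$. This gives $\epsilon_{T'}(u)>\epsilon_{T'}(v)$. By Corollary \ref{cor1} and Theorem \ref{zha}, $\mathfrak{C}(T')$ is then either unchanged or lies in $X$, farther from $v$.

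To control $S_c(T')$, we do not delete $Y$ all at once. Instead we move the pendant vertices one at a time, each time re-attaching a pendant vertex adjacent to the current subtree-core vertex $x$. For this to match the construction above, we should in fact choose $y'=x$, i.e.\ $r$ a pendant neighbour of $x$ (or a newly created one). Lemma \ref{lem:sc1} then says that after each move $S_c=\{x\}$. The moves are done by repeatedly detaching a pendant vertex of $Y$ not on the $v$-$x$ path; removing leaves iteratively strips every subtree hanging off the $v$-$x$ path. Proposition \ref{prop4} is formulated for the whole deletion at once, but its proof (through Lemma \ref{lem4}) only uses that the final $Y'$ is a path-star tree with initial vertex $v$. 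We therefore apply it to the final tree $T'$, where $Y'=P_{m-h,h}$ has initial vertex $v$ and star centre $x$. Consequently $S_c(T')=\{x\}$ and $d_{T'}(S_c,\mathfrak{C})\geq d_T(S_c,\mathfrak{C})$.

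\textbf{Second reduction.} Let $\epsilon_{T'}(v)=f_{T'}(v,s)$; by Lemma \ref{lem2}, $s\in X$. Form $\hat T$ by keeping only the $u$-$s$ path of $X$ and moving the remaining vertices of $X$, again one pendant vertex at a time, to become pendant neighbours of $x$. Proposition \ref{prop5} gives $\epsilon_{\hat T}(u)>\epsilon_{\hat T}(v)$, so the core center stays at $u$ or moves toward $s$, away from $x$. Lemma \ref{lem:sc1}, applied at every step, keeps $S_c=\{x\}$. The resulting tree is the path-star tree $P_{n-g,g}$ whose path runs from $s$ through $u,v$ to $x$, with $g$ pendant vertices at $x$. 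Hence $d_{\hat T}(S_c,\mathfrak{C})\geq d_T(S_c,\mathfrak{C})$, as required.

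\textbf{Main obstacle.} The delicate point is reconciling the two tools. Propositions \ref{prop4} and \ref{prop5} place the moved vertices at the neighbour of the end pendant vertex, while Lemma \ref{lem:sc1} requires them to be attached to the current subtree-core vertex. This is resolved by choosing the endpoint $r$ as a pendant vertex adjacent to $x$. If $x$ has no pendant neighbour, we first detach some pendant vertex of $Y$ lying off the $v$-$x$ path and attach it to $x$. Lemma \ref{lem:sc1} keeps $S_c$ at $x$, but we must separately check that this one extra move does not push $\mathfrak{C}$ toward $x$. That check is a single-leaf instance of Proposition \ref{prop4}: it requires $f(v,r)/f(v)$ in $Y$ to be nondecreasing under the move, which holds because the move only enlarges the set of leaves hanging at $x$.
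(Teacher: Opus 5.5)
Your overall route is exactly the paper's proof: Proposition \ref{prop4} on $Y$, then Proposition \ref{prop5} on $X$, with Lemma \ref{lem:sc1} applied one leaf at a time to keep the subtree core fixed. The paper piles the leaves on the subtree-core vertex $y$ not nearer to $u$ rather than on $x$, but either choice works since $\mathfrak{C}$ stays in $X$.

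The problem is your ``main obstacle'' paragraph, which comes from misreading Proposition \ref{prop4}. The pendant vertex $r$ there only fixes a path. The kept path may stop at any vertex of that path other than $v$, and the displaced vertices go to that vertex's neighbour on the $v$-side. So do the following:
\begin{enumerate}
\item take any neighbour $r'$ of $x$ not on the $v$-$x$ path (it exists because $x$ is not pendant);
\item take any pendant vertex of $T$ beyond $r'$ as the proposition's $r$;
\item take $r'$ as the proposition's $y$.
\end{enumerate}
Then all displaced vertices land at $x$ in one application, with no preliminary move. This is what the paper does, with $y$ in place of $x$.

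The preliminary move you add when $x$ has no pendant neighbour is therefore unnecessary, and your justification for it is not valid. A single-leaf move is not an instance of Proposition \ref{prop4}. That proof relies on all leaves of the final path-star $Y'$ being equivalent, so that $\epsilon_{T'}(u)$, if attained in $Y'$, is attained at $r$. In your intermediate tree, $\epsilon(u)$ can be attained at some other leaf $w$ of $Y$. Moreover, the move does not merely add a leaf at $x$; it also deletes one elsewhere, and that can lower $f(v,w)/f(v)$.

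For instance, suppose the leaf $\ell$ is taken from a vertex $p$ strictly beyond $x$, and $w\neq\ell$ is a leaf with $p$ on the $v$-$w$ path. With $Y''=Y-\ell$, the ratio changes from $2f_{Y''}(v,w)/\bigl(f_{Y''}(v)+f_{Y''}(v,p)\bigr)$ to $2f_{Y''}(v,w)/\bigl(f_{Y''}(v)+f_{Y''}(v,x)\bigr)$. This is strictly smaller, because $f_{Y''}(v,x)>f_{Y''}(v,p)$. So the claim that $\epsilon(u)>\epsilon(v)$ survives that move is unproved. Delete the step and apply Proposition \ref{prop4} directly to $T$ as described above.
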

\begin{proof}
Let $T$ be a tree of on $n\geq 6$ vertices. Let $\mathfrak{C}(T)=\{w,u\}$ and $S_c(T)=\{x,y\}$ where $w=u$ if $|\mathfrak{C}(T)|=1$ and $x=y$ if $|S_c(T)|=1$.  The result is obvious if $d_T(S_c,\mathfrak{C})=0$, so assume that  $d_T(S_c,\mathfrak{C})=d_T(u,x)\geq 1$.

Let $B_1,B_2,\ldots,B_k$ be the branches of $T$ at $u$ and let the branch $B_1$ contains $S_c(T).$ Let $v$ be the vertex of $B_1$ adjacent to $u$. Clearly, $v\notin \mathfrak{C}(T)$ and so $\epsilon_T(u)>\epsilon_T(v)$. Let $X$ and $Y$ be the components of $T-uv$ containing $u$ and $v$, respectively. Let $r$ be a vertex in $Y$ adjacent to $y$ but not on the $v$-$y$ path. Form  a new tree $T'$ from $T$ by deleting all the vertices of $Y$ except the vertices that lie on the $v$-$r$ path and adding the same number of pendant vertices to the vertex $y$. Note that if $Y=P_{m-h,h}\;(h\geq 1)$ then $T'=T.$ By Proposition \ref{prop4}, $\epsilon_{T'}(u)>\epsilon_{T'}(v)$ and so $\mathfrak{C}(T)=\mathfrak{C}(T')$ or $\mathfrak{C}(T')$ moves away from the vertex $y$. The new tree $T'$ can also be obtained from $T$ by deleting required pendant vertices step by step and adding them as pendant vertices to the vertex $y$. Then by Lemma \ref{lem:sc1}, $S_c(T')=\{y\}$ and thus,  $d_{T'}(S_c,\mathfrak{C})\geq d_T(u,x)=d_T(S_c,\mathfrak{C})$.

Take the new tree $T'$ as $T$ and continue the same process, till the component $Y$ of $T-uv$ become a path-star tree with $v$ as the initial vertex. Now consider the component $Y$ of $T-uv$ is a path-star tree. Let  $\epsilon_{T}(v)=f_T(v,s)$. Then by Lemma \ref{lem2}, $s$ is in $X$. If  the $u$-$s$ path is the whole component $X$, then $T$ is a path-star tree. Otherwise, construct a new tree 
$\hat{T}$ from $T$ by deleting all the vertices of $X$ except the vertices that lie on the $u$-$s$ path and adding the same number of pendant vertices to the vertex $y$. Now the new tree $\hat{T}$ is a path-star tree. By Proposition \ref{prop5}, $\epsilon_{\hat{T}}(u)>\epsilon_{\hat{T}}(v)$ and so $\mathfrak{C}(\hat{T})=\mathfrak{C}(T)$ or $\mathfrak{C}(\hat{T})$ moves away from the vertex $y$. As mentioned above, the new tree $\hat{T}$ can also be obtained from $T$ by deleting pendant vertices step by step and adding them as pendant vertices to the vertex $y$. Then by Lemma \ref{lem:sc1}, $S_c(\hat{T})=\{y\}$ and thus, $d_{\hat{T}}(S_c,\mathfrak{C})\geq d_T(S_c,\mathfrak{C})$ and this proves the result.
\end{proof}

\begin{lemma}\label{lem:sc2}
Let $n\geq 6$ and $g\geq 2$.  If  $2^{g-1}+g \geq n-3$ then $2^{g}+1\geq n-g$. Moreover, the equality happens if and only if $(n,g)=(7,2)$.
\end{lemma}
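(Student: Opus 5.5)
From the hypothesis $2^{g-1}+g\geq n-3$ I get $n\leq 2^{g-1}+g+3$, and so
$$(2^g+1)-(n-g)\;\geq\; 2^g+1-2^{g-1}-3 \;=\; 2^{g-1}-2.$$
For $g\geq 2$ we have $2^{g-1}\geq 2$, so the right-hand side is nonnegative. This already gives $2^g+1\geq n-g$, and the rest of the proof is about pinning down when equality holds.

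Equality $2^g+1=n-g$ forces both inequalities in the display to be tight. First, $2^{g-1}=2$, so $g=2$. Second, $n=2^{g-1}+g+3$, which for $g=2$ gives $n=7$. Hence equality can only occur when $(n,g)=(7,2)$.

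For the converse, take $(n,g)=(7,2)$. The hypothesis holds because $2^{1}+2=4=n-3$. Equality holds because $2^2+1=5=n-g$.

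The only place needing care is the characterization of equality: both tightness conditions ($g=2$ and $n-3=2^{g-1}+g$) must be derived together. Everything else is a one-line computation, so I expect no real obstacle.
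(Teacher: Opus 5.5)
Your proof is correct and follows essentially the same route as the paper: both arguments combine $2^{g-1}\geq 2$ (from $g\geq 2$) with the hypothesis $2^{g-1}+g\geq n-3$. Both also observe that equality forces both bounds to be tight, which gives $g=2$ and $n=7$; you additionally write out the check that $(7,2)$ satisfies the hypothesis and attains equality.
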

\begin{proof}
We have $2^{g}+1=2^{g-1}+2^{g-1}+g+1-g \geq 2+(2^{g-1}+g)+1-g\geq 2+n-3+1-g = n-g$ and the equality happens if and only if  $g=2$ and $2^{g-1}+g = n-3$. So, for  $2^{g-1}+g \geq n-3$,  $2^{g}+1= n-g$ if and only if $(n,g)=(7,2)$.
\end{proof}
\begin{theorem}\label{thm:sc2}
Let T be a tree on $n\geq 6$ vertices. Let $g_0\geq 2$ be the smallest positive integer such that $2^{g_0-1}+g_0\geq n-3$. Then 
$$d_T(S_c,\mathfrak{C})\leq\left\{
\begin{array}{ll}
 1,  &\text{if $n=7$,}\\
 n-g_0-3, &\text{if $n\neq 7$}\\
\end{array}
\right.$$ 
 and equality happen in both the cases if $T$ is isomorphic to $P_{n-g_0,g_0}$.
\end{theorem}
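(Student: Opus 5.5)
The plan is to reduce to path-star trees with Theorem \ref{thm:sc1} and then compare the explicit formulas for $S_c$ and $\mathfrak{C}$ on $P_{n-g,g}$. By Theorem \ref{thm:sc1}, $d_T(S_c,\mathfrak{C})\le d_{P_{n-g,g}}(S_c,\mathfrak{C})$ for some $g$. For $g\in\{1,n-2\}$ the tree is a path or a star, so the distance is $0$. Hence it suffices to show that for every $g\in\{2,\ldots,n-3\}$ the distance in $P_{n-g,g}$ is at most the claimed bound, and that $g=g_0$ attains it.

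\emph{Ordering.} By Proposition \ref{prop3}, $\mathfrak{C}(P_{n-g,g})$ lies on the path from $2$ to $C(P_{n-g,g})$. By the cited results (\cite{Klp}, \cite{Des}), $C_d$ lies between $C$ and $S_c$ on the path $2,3,\ldots,n-g$. Thus the subtree core lies at or beyond the core center. Consequently $d(S_c,\mathfrak{C})\le \max S_c-\min\mathfrak{C}$, where $\min\mathfrak{C}\ge p_g-1$; more precisely, the relevant vertex of $\mathfrak{C}$ closest to $S_c$ is $p_g$. So we have $d\le s_g-p_g$, where $s_g$ is the smallest index in $S_c$. As a cruder bound, $d\le \max S_c - p_g$.

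\emph{Case $g\ge g_0$.} Here $2^{g-1}+g\ge n-3$, so Lemma \ref{lem:cc1}(i) gives $p_g=3$. By Lemma \ref{lem:sc2}, $2^g+1\ge n-g$, with equality only for $(n,g)=(7,2)$.
\begin{itemize}
\item If the inequality is strict, Lemma \ref{lem2.4} gives $S_c=\{n-g\}$. Hence $d=n-g-3\le n-g_0-3$.
\item For $n=7$ we have $g_0=2$. In $P_{5,2}$ we get $S_c=\{4,5\}$ by Lemma \ref{lem2.4}. Also $p_2=\lceil 9/3\rceil=3$, and $11/3\notin\mathbb{N}$, so $\mathfrak{C}=\{3\}$ by Lemma \ref{lem2.1}. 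The distance is therefore $1$. For $n=7$ and $g\in\{3,4\}$, the strict case gives $d\le 7-g-3\le 1$.
\end{itemize}

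\emph{Case $2\le g<g_0$.} Here $2^{g-1}+g<n-3$. Iterating Lemma \ref{lem:cc1}(ii) from $g_0$ down to $g$, with $p_{g_0}=3$, gives $p_g\ge 3+(g_0-g)$. In every case of Lemma \ref{lem2.4}, every vertex of $S_c$ has index at most $n-g$: when $2^g+1\le n-g$, the largest index is at most $(n-g+1+2^g)/2\le n-g$. Hence
\[
d\le (n-g)-p_g\le n-g-3-(g_0-g)=n-g_0-3 .
\]
This case does not occur for $n=7$, since there $g_0=2$. For $n=6$, $g_0=2$ as well, so the case is also vacuous.

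\emph{Equality.} For $n\ne 7$, the tree $P_{n-g_0,g_0}$ has $S_c=\{n-g_0\}$, because the inequality of Lemma \ref{lem:sc2} is strict. It has $\mathfrak{C}\in\{\{3\},\{2,3\}\}$ by Lemma \ref{lem2.1} and Lemma \ref{lem:cc1}(i). So the distance equals $n-g_0-3$. For $n=7$, the computation above gives $1$. The main obstacle is the middle case: we need the clean estimate $\max S_c\le n-g$ combined with the telescoped lower bound on $p_g$. The ordering fact ensures we are measuring $S_c$ to the right of $\mathfrak{C}$, so no absolute-value issue arises.
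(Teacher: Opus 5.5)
Your proof follows the paper's route. You reduce to $P_{n-g,g}$ via Theorem~\ref{thm:sc1}, then split into two cases. For $g\ge g_0$ you use $p_g=3$, and Lemma~\ref{lem:sc2} gives $S_c=\{n-g\}$ except when $(n,g)=(7,2)$. For $2\le g<g_0$ you use $p_g\ge 3+(g_0-g)$ and $\max S_c\le n-g$. Your ordering paragraph, which ensures $S_c$ lies at or to the right of $p_g$ so that $d\le \max S_c-p_g$, makes explicit something the paper uses silently.

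\textbf{Gap in the bound on $p_g$.} One step does not follow from the lemma you cite. You get $p_g\ge 3+(g_0-g)$ by iterating Lemma~\ref{lem:cc1}(ii) ``from $g_0$ down''. But (ii) gives $p_{g'-1}-p_{g'}\ge 1$ only when $2^{g'-1}+g'\le n-3$, a hypothesis on the larger index $g'$.
\begin{itemize}
\item For $g'\le g_0-1$ the hypothesis holds by minimality of $g_0$.
\item For the first step $g'=g_0$ it holds only when $2^{g_0-1}+g_0=n-3$ exactly. For example, $n=8$ gives $g_0=3$ with $2^2+3=7>5$, so (ii) says nothing about $p_2-p_3$.
\end{itemize}
Without this step, the cited results (with Remark~\ref{rem:p}) only give $p_{g_0-1}\ge 3$. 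That yields $p_g\ge 2+(g_0-g)$ and the weaker bound $n-g_0-2$.

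\textbf{Fix.} Prove $p_{g_0-1}\ge 4$ directly. Since $g_0-1\ge 2$ fails the defining inequality, $2^{g_0-2}+g_0-1\le n-4$. Hence
\[
\frac{n-(g_0-1)+2^{g_0-1}}{1+2^{g_0-2}}\ \ge\ \frac{3\cdot 2^{g_0-2}+4}{1+2^{g_0-2}}\ =\ 3+\frac{1}{1+2^{g_0-2}}\ >\ 3,
\]
so $p_{g_0-1}\ge 4$. In other words, Lemma~\ref{lem:cc1}(i) is really an equivalence, and you need its converse. Iterating (ii) for $g'=g_0-1,\dots,g+1$ then gives $p_g\ge 4+(g_0-1-g)=3+(g_0-g)$, as you want.

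The paper's proof makes the same unstated step when it writes ``By Lemma~\ref{lem:cc1}, $p_{g_0-k'}\geq k'+3$''. With this line added, your argument is complete.
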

\begin{proof}
Let $T$ be a tree on $n\geq 6$ vertices. By Theorem \ref{thm:cd1}, $d_T(C_d,\mathfrak{C})\leq d_{P_{n-g,g}}(C_d,\mathfrak{C})$ for some $g\in \{2,3,\ldots,n-3\}$.
Let $g_0\geq 2$ be the smallest positive integer such that $2^{g_0-1}+g_0\geq n-3$. Consider the path-star tree $P_{n-g_0,g_0}$. There are two cases:

\textbf{Case-I:} $n=7$.\\
Then $g_0=2$ and by Lemma \ref{lem2.1} $\mathfrak{C}(P_{5,2})=3$. Since $2^{g_0}+1=5=n-g_0$, by Lemma \ref{lem2.4} $S_c(P_{5,2})=\{4,5\}$ and so, $d_{P_{5,2}}(S_c, \mathfrak{C})=1$. Using Lemma \ref{lem2.1} and Lemma \ref{lem2.4}, it can be checked that $d_{P_{4,3}}(S_c, \mathfrak{C})=1$ and $d_{P_{3,4}}(S_c, \mathfrak{C})=0$.

\textbf{Case-II:} $n\neq 7$.\\
By Lemma \ref{lem:sc2},  $2^{g_0-1}+g_0 \geq n-3$  implies $2^{g_0}+1> n-g_0$. Therefore, $\mathfrak{C}(P_{n-g_0,g_0})=\{2,3\}$ or $\{3\}$ and $S_c(P_{n-g_0,g_0})=\{n-g_0\}$, hence $d_{P_{n-g_0,g_0}}(S_c, \mathfrak{C})=n-g_0-3$.\\

Let $k\geq 1$ be an integer such that $g_0+k\leq n-3$. Then by Lemma \ref{lem:sc2}, $2^{g_0+k}+1> n-(g_0+k)$ and so, $S_c(P_{n-g_0-k,g_0+k})=\{n-g_0-k\}$. Hence $d_{P_{n-g_0-k,g_0+k}}(S_c, \mathfrak{C})=n-g_0-k-3<n-g_0-3=d_{P_{n-g_0,g_0}}(S_c, \mathfrak{C})$.

Now let $k'\geq 1$ be an integer such that $g_0-k'\geq 2$. Clearly $2^{g_0-k'-1}+g_0-k'< n-3$. By Lemma \ref{lem:cc1}, $ p_{g_0-k'}\geq k'+3$. Also, by Lemma \ref{lem2.4} $S_c(P_{n-g_0+k',g_0-k'})=\{a,b\}$ or $\{c\}$ where both $a$ and $c$ are less than or equal to $n-g_0+k'$. Hence $d_{P_{n-g_0+k',g_0-k'}}(S_c, \mathfrak{C})\leq (n-g_0+k')-(k'+3)=n-g_0-3=d_{P_{n-g_0,g_0}}(S_c, \mathfrak{C})$. This proves the result.
\end{proof}

\subsection{Characteristic center and core center}

Let $T$ be a tree and $v\in V(T)$. Let $C_1,C_2,\ldots, C_k$ be the connected components of $T-v$. For $i=1,2,\ldots,k$,  let $\hat{L}(C_i)$ denoted the principle sub-matrix of the Laplacian matrix $L(T)$ corresponding to the vertices  of $C_i$. Then $\hat{L}(C_i)$ is invertible  and $\hat{L}(C_i)^{-1}$ is a positive matrix, called the bottleneck matrix for $C_i$. 

By Perron-Frobenius theorem, $\hat{L}(C_i)^{-1}$ has simple dominant eigenvalue, called the Perron value of $C_i$ at $v$. The component $C_j$ is called a Perron component at $v$ if the Perron value of $C_j$ is maximal among the components  $C_1,C_2,\ldots, C_k$ at $v$. The entries of the bottleneck matrices is described in the next result.

\begin{lemma}[\cite{Kirk2}, Proposition 1]\label{lem:ch1}
Let $T$ be a tree and let $v \in V(T)$. Let $C_1$ be a component of $T-v$ and $L_1$ be the sub-matrix of $L(T)$ corresponding to $C_1$. Then $L_{1}^{-1}=(c_{ij})$, where $c_{ij}$ is the number of edges in common between the paths $P_{iv}$ and $P_{jv}$, where $P_{iv}$ denotes the path joining $i$ and $v$. 
\end{lemma}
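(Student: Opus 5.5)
The statement is Lemma \ref{lem:ch1}: for a component $C_1$ of $T-v$ with submatrix $L_1$, the entries of $L_1^{-1}$ count the common edges of the paths to $v$. The plan is to define $M=(c_{ij})_{i,j\in V(C_1)}$, with $c_{ij}$ the number of common edges of $P_{iv}$ and $P_{jv}$, and to verify directly that $L_1M=I$. Since $L_1$ is square, this gives $M=L_1^{-1}$, and invertibility comes out of the same computation.

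First, the structure of $C_1$. It contains exactly one neighbour $w$ of $v$. For $i\in V(C_1)$ the path $P_{iv}$ runs inside $C_1$ from $i$ to $w$ and then uses the edge $wv$. Hence $c_{ij}=d(m_{ij},v)$, where $m_{ij}$ is the vertex at which the paths $P_{iv}$ and $P_{jv}$ merge (for a tree rooted at $v$, the lowest common ancestor of $i$ and $j$). In particular $c_{ii}=d(i,v)$. The row of $L_1$ indexed by $i$ has diagonal entry $\deg_T(i)$ and entry $-1$ for each neighbour of $i$ inside $C_1$. Note that for $i=w$ the edge $wv$ contributes to the degree but gives no off-diagonal entry.

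Next, fix a column $j$ and compute $(L_1M)_{ij}=\deg_T(i)\,c_{ij}-\sum_{k\sim i,\,k\in C_1}c_{kj}$ for each $i$. Root $T$ at $v$. Each $i$ has one parent, which is $v$ itself when $i=w$, and some children, all lying in $C_1$. There are three cases.
\begin{enumerate}
\item[(a)] $i$ is not on $P_{jv}$. Then all neighbours $k$ of $i$ have $m_{kj}=m_{ij}$, so $c_{kj}=c_{ij}$ for every neighbour. This includes the parent, which is in $C_1$ here, because if $i=w$ then $i$ would lie on $P_{jv}$. The sum is therefore $\deg(i)c_{ij}-\deg(i)c_{ij}=0$.
\item[(b)] $i$ lies on $P_{jv}$ and $i\neq j$. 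Then $c_{ij}=d(i,v)$. The child of $i$ on the path toward $j$ has value $d(i,v)$, and the other children also have value $d(i,v)$. The parent has value $d(i,v)-1$; this value is $0$ exactly when the parent is $v$, so that term may be dropped. The total is $\deg(i)d(i,v)-(\deg(i)-1)d(i,v)-(d(i,v)-1)=1-1$. Let me recount: $\deg(i)d-[(\deg(i)-1)d+(d-1)]=1$. This would be wrong off the diagonal, so recheck case (b) with care. In $P_{jv}$ the child of $i$ toward $j$ is $k'$, and $c_{k'j}=d(k',v)=d+1$ because $k'$ is itself on $P_{jv}$. The other children give $d$ and the parent gives $d-1$. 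The sum is $\deg(i)d-(d+1)-(\deg(i)-2)d-(d-1)=0$, as required.
\item[(c)] $i=j$. Every child has value $d$ and the parent has value $d-1$ (or contributes $0$ when the parent is $v$). The result is $\deg(i)d-(\deg(i)-1)d-(d-1)=1$.
\end{enumerate}

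These cases give $L_1M=I$, and hence $L_1^{-1}=M$. The main point needing care is the bookkeeping at $i=w$, where the parent $v$ is outside $C_1$. This is handled uniformly because the missing term would contribute $c_{vj}=0$ anyway. It is also where the positive definiteness, and so the invertibility, of $L_1$ comes from.
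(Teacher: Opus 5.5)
The paper gives no proof of this lemma; it quotes it from Kirkland, Neumann and Shader. Your argument therefore has to stand on its own, and it does. The identification $c_{ij}=d(m_{ij},v)$ is correct, and so is the three-case check that $(L_1M)_{ij}=\delta_{ij}$. That includes the boundary vertex $w$, where the missing neighbour term would be $c_{vj}=0$.

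Three clean-up points:
\begin{itemize}
\item In case (b), delete the first computation, which wrongly gives the child $k'$ towards $j$ the value $d(i,v)$ and ends in ``$=1-1$''. Only the recomputation with $c_{k'j}=d+1$ belongs in the proof.
\item In case (a), add the one-line reason why every neighbour $k$ of $i$ has $m_{kj}=m_{ij}$. Since $i$ is not an ancestor of $j$, $m_{ij}$ is a proper ancestor of $i$, hence an ancestor of the parent of $i$. Also, no child of $i$ can be an ancestor of $j$.
\item The closing remark about positive definiteness is unnecessary. For square matrices, $L_1M=I$ already gives invertibility and $M=L_1^{-1}$.
\end{itemize}

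For comparison, there is a more conceptual route. Factor $L_1=NN^{T}$, where $N$ is the square incidence matrix with rows indexed by $V(C_1)$ and columns by the edges of $C_1$ together with $wv$, each edge oriented towards $v$. Then $N^{-1}=X$, the edge--path indicator matrix with $X_{e,j}=1$ if $e$ lies on $P_{jv}$ and $0$ otherwise. Hence $L_1^{-1}=X^{T}X$, and its $(i,j)$ entry counts the common edges of $P_{iv}$ and $P_{jv}$.

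Your case analysis is essentially this computation unrolled. The factorization is shorter and extends immediately to weighted trees, where $L_1^{-1}=X^{T}W^{-1}X$ and the entries become sums of reciprocal edge weights; that is the setting of the cited source. Your version is fully elementary and self-contained.
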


Now we will discuss some results which help us to understand the movement of characteristic center under some perturbations.

\begin{theorem}[\cite{Kirk2}, Corollary 1.1]\label{thm:ch1}
Let $T$ be a tree. Then  $\chi_c(T)=\{v_i,v_j\}$  if and only if the component $T_i$ at vertex $v_j$ containing the vertex $v_i$ is the unique Perron component at $v_j$ while the component $T_j$ at vertex $v_i$ containing the vertex $v_j$ is the unique Perron component at $v_i$.
\end{theorem}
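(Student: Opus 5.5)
The plan is to derive the statement from the sign structure of a Fiedler vector $Y$ on $T$, using bottleneck matrices (Lemma \ref{lem:ch1}) together with Perron--Frobenius comparisons. Recall Fiedler's dichotomy for trees. Either $Y$ has a vertex $v$ with $Y(v)=0$ that is adjacent to a nonzero entry (Type I), or there is an edge $v_iv_j$ with $Y(v_i)Y(v_j)<0$ (Type II). Since $\chi_c(T,Y)$ does not depend on $Y$, the characteristic center is either a single vertex or an edge, in agreement with Theorem \ref{p and p}.

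\textbf{Step 1: a Perron value inequality for components away from the characteristic set.} Let $w$ be any vertex and let $C$ be a component of $T-w$ that contains no characteristic vertex. Then $Y$ is of one sign on $C$ (say nonnegative), and on $C$ it satisfies
\[
\hat L(C)\,Y_C=\mu Y_C+Y(w)\,e_{x},
\]
where $x$ is the neighbour of $w$ in $C$. Multiplying by the positive matrix $\hat L(C)^{-1}$ and comparing with the Perron vector, I expect to obtain:
\begin{enumerate}
\item[(a)] $\rho(\hat L(C)^{-1})<1/\mu$ when $Y(w)$ has the same sign as $Y_C$, which is the case of a component pointing away from the characteristic set;
\item[(b)] $\rho(\hat L(C)^{-1})=1/\mu$ exactly when $Y(w)=0$ and $Y_C\neq 0$.
\end{enumerate}

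Next, if $C'$ is the component of $T-w$ that contains the characteristic set, I show $\rho(\hat L(C')^{-1})>1/\mu$. The tool is monotonicity. By Lemma \ref{lem:ch1}, a bottleneck matrix of a larger branch entrywise dominates, on the common index set, the bottleneck matrix of a sub-branch, and it has extra positive entries. So Perron values strictly increase along nested branches. Combined with (b), this handles the component containing the zero vertex in Type I. In Type II, I would use the $\theta$-shift formulation
\[
\rho\bigl(\hat L(T_i)^{-1}-\theta J\bigr)=\rho\bigl(\hat L(T_j)^{-1}-(1-\theta)J\bigr)=1/\mu
\]
for some $\theta\in(0,1)$. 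This gives $\rho(\hat L(T_i)^{-1})>1/\mu$, where $T_i$ is the component at $v_j$ containing $v_i$, and likewise for $T_j$.

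\textbf{Step 2: orientation of Perron components.} From Step 1 I get the following picture. At every vertex $w$ outside the characteristic set, the unique Perron component is the one containing the characteristic set. In Type I at $v$, there are at least two Perron components at $v$, namely those components on which $Y$ is nonzero, all with Perron value $1/\mu$. In Type II at $\{v_i,v_j\}$, the component $T_i$ at $v_j$ has Perron value $>1/\mu$, while every other component at $v_j$ has Perron value $<1/\mu$ by (a). So $T_i$ is the unique Perron component at $v_j$, and symmetrically $T_j$ is the unique Perron component at $v_i$. This proves ($\Rightarrow$).

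\textbf{Step 3: the converse.} Suppose $T_i$ is the unique Perron component at $v_j$ and $T_j$ is the unique Perron component at $v_i$. If the characteristic set were not $\{v_i,v_j\}$, then by Step 2 the Perron components at $v_i$ and $v_j$ would both point toward the true characteristic set. Whether that set is a single vertex $v$ or another edge, it lies (weakly) on one side of the edge $v_iv_j$. Suppose first that it lies on the $v_i$ side, including the case where it equals or contains $v_i$. At $v_j$, the Perron component containing it is $T_i$, which is consistent. At $v_i$, however, the component $T_j$ contains no characteristic vertex. If $v_i$ is the Type I vertex, Step 2 gives at least two Perron components at $v_i$, contradicting uniqueness. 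Otherwise the Perron component at $v_i$ is the one containing the characteristic set, not $T_j$, again a contradiction. The case where the characteristic set lies on the $v_j$ side is symmetric. Hence $\chi_c(T)=\{v_i,v_j\}$.

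The main obstacle is Step 1, namely the strict comparisons $\rho<1/\mu$ and $\rho>1/\mu$ in the Type II case. There one must handle the rank-one $J$-perturbation of the bottleneck matrices to certify that the characteristic-side component strictly dominates. I would try first to prove it directly from the equation $\hat L(T_i)Y_{T_i}=\mu Y_{T_i}+Y(v_j)e_{v_i}$, using the fact that $Y(v_j)$ has the opposite sign to $Y_{T_i}$.
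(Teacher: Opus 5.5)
The paper gives no proof of this statement; it simply cites \cite{Kirk2}. Your architecture (signs of a Fiedler vector on the branches at a vertex, Perron values of bottleneck matrices, monotonicity under nesting) is essentially the Kirkland--Neumann--Shader one, and the forward direction goes through as you describe. The gap is that Step~1 says nothing about a component $C$ of $T-w$ on which $Y\equiv 0$, which forces $Y(w)=0$. Pair $\hat L(C)Y_C=\mu Y_C+Y(w)e_x$ with the positive eigenvector $z$ of $\hat L(C)$ for its least eigenvalue $\lambda_C=1/\rho(\hat L(C)^{-1})$. This gives
\[
(\lambda_C-\mu)\,z^{T}Y_C=Y(w)\,z_x ,
\]
which reads $0=0$ for such $C$. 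These components cannot be avoided by choosing $Y$ well. Take $K_{1,3}$ ($\mu=1$) with $Y=(0,1,-1,0)$, centre first. The third leaf is a branch at the centre with Perron value $1=1/\mu$, yet $Y$ vanishes on it. So the ``exactly when'' in (b) is false, and so is your description of the Perron components at the Type~I vertex as ``those on which $Y$ is nonzero''. More generally, any branch at the Type~I vertex $v$ whose Perron value is not maximal is annihilated by every Fiedler vector.

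You need a bound for exactly these components in three places:
\begin{enumerate}
\item[(i)] to know that the nonzero branches at $v$ are Perron components at all;
\item[(ii)] to get uniqueness of the Perron component at a vertex $w$ inside a zero branch $Z$ at $v$, since the components of $T-w$ away from $v$ carry $Y\equiv 0$ and (a) does not apply;
\item[(iii)] in Step~3, when $v_i$ is the Type~I vertex and $T_j$ is a zero branch, or when the Type~I vertex lies in $T_i$ and $v_i$ sits inside a zero branch at it.
\end{enumerate}
The missing idea is variational, not a Perron-vector pairing. Suppose a branch $Z$ at $v$ had $\lambda_Z<\mu$. Let $z>0$ be its eigenvector and let $C_0$ be a branch with $Y_{C_0}>0$. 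Put $x=\alpha z$ on $Z$, $x=-\beta Y_{C_0}$ on $C_0$ and $x=0$ elsewhere, with $\alpha,\beta>0$ chosen so that $x\perp\mathbf 1$. Since $x(v)=0$, the quadratic form splits over the two branches. Also $Y_{C_0}^T\hat L(C_0)Y_{C_0}=\mu\|Y_{C_0}\|^2$ because $Y(v)=0$. Hence $x^TLx=\alpha^2\lambda_Z\|z\|^2+\beta^2\mu\|Y_{C_0}\|^2<\mu\,x^Tx$, contradicting Courant--Fischer. So every branch at $v$ has Perron value at most $1/\mu$. Your nesting monotonicity then makes every component inside $Z$ strictly smaller, which closes (i)--(iii).

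The step you flagged as the main obstacle is not one. The displayed identity with $C=T_i$ and $w=v_j$ has $z^TY_{T_i}>0$ and $Y(v_j)z_{v_i}<0$, which gives $\rho(\hat L(T_i)^{-1})>1/\mu$ at once. Do not invoke the $\theta$-shift identity: it comes from the same source as the statement you are proving, so quoting it would be circular, and it is unnecessary anyway.
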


\begin{theorem}[\cite{Kirk2}, Corollary 2.1]\label{thm:ch2}
Let $T$ be a tree. Then $\chi_c(T)=\{ v \}$ if and only if there are two or more Perron components of $T$ at $v$.
\end{theorem}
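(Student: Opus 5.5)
The plan is to work directly with a Fiedler vector $Y$ and with the spectra of the principal submatrices $\hat{L}(C_i)$. Throughout, let $C_1,\ldots,C_k$ be the components of $T-v$ and let $\rho_i$ be the Perron value of $\hat{L}(C_i)^{-1}$. The smallest eigenvalue of $\hat{L}(C_i)$ is then $1/\rho_i$, with a positive eigenvector.

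Two facts will be used repeatedly.

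\begin{enumerate}
\item[(a)] Deleting row and column $v$ from $L(T)$ gives the block diagonal matrix $\bigoplus_i \hat{L}(C_i)$. Cauchy interlacing then gives $\lambda_1\bigl(\bigoplus_i\hat{L}(C_i)\bigr)\leq \mu(T)$, that is, $\mu(T)\geq 1/\max_i\rho_i$. This holds at every vertex $v$.
\item[(b)] By the remark after Theorem \ref{p and p}, $\chi_c(T,Y)$ does not depend on the choice of $Y$. So it suffices to exhibit one suitable Fiedler vector.
\end{enumerate}

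\textbf{($\Leftarrow$)} Suppose $C_1$ and $C_2$ are both Perron components at $v$, with common Perron value $\rho=\max_i\rho_i$. Let $x_1>0$ and $x_2>0$ be the corresponding Perron vectors. Define $Y$ by
\begin{itemize}
\item $Y=x_1$ on $C_1$,
\item $Y=-c\,x_2$ on $C_2$, with $c>0$ chosen so that $Y\perp \mathbf{1}$,
\item $Y=0$ at $v$ and on every other component.
\end{itemize}
Since $Y(v)=0$, the quadratic form satisfies $Y^TL(T)Y=\sum_i Y_i^T\hat{L}(C_i)Y_i$. Hence the Rayleigh quotient of $Y$ equals $1/\rho$, which gives $\mu(T)\leq 1/\rho$. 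Combined with (a), $\mu(T)=1/\rho$, so $Y$ is a Fiedler vector.

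Next I read off $\chi_c(T,Y)$.
\begin{itemize}
\item $v$ is characteristic by condition (i), since its neighbours in $C_1$ and $C_2$ are nonzero.
\item Inside $C_1$ and inside $C_2$ the entries have constant sign, so there is no edge with $Y(a)Y(b)<0$.
\item Every zero vertex other than $v$ lies in a component with $Y\equiv 0$. Its only possible nonzero neighbour is outside that component, but it can only leave the component through $v$, where $Y(v)=0$. So no such vertex is characteristic.
\end{itemize}
Thus $\chi_c(T,Y)=\{v\}$, and by (b) $\chi_c(T)=\{v\}$.

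\textbf{($\Rightarrow$)} Suppose $\chi_c(T)=\{v\}$, and take any Fiedler vector $Y$.
\begin{itemize}
\item If $Y$ had no zero entries, there would be an edge $ab$ with $Y(a)Y(b)<0$ (since $Y\perp\mathbf{1}$ and $Y\neq 0$). Both $a$ and $b$ would then be characteristic, a contradiction.
\item If some zero vertex $w\neq v$ had a nonzero neighbour, then $w$ would be characteristic. Tracing $Y$ along paths, the only way to obtain a single characteristic vertex is $Y(v)=0$ with $v$ adjacent to a nonzero vertex.
\end{itemize}
So $Y(v)=0$. Restricting the eigen-equation to each $C_i$ gives $\hat{L}(C_i)Y_i=\mu Y_i$, where $Y_i$ is the restriction of $Y$ to $C_i$.

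Whenever $Y_i\neq 0$, the vector $Y_i$ has no sign change inside $C_i$, since otherwise another characteristic vertex would appear. It has no interior zeros adjacent to nonzeros for the same reason, so $Y_i$ is strictly of one sign. A one-signed eigenvector of the irreducible matrix $\hat{L}(C_i)$ must be its Perron vector. Hence $\mu=1/\rho_i$ for every such $i$.

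Because $\sum_u Y(u)=0$, there are at least two such components, one positive and one negative. By (a), $1/\rho_i=\mu\geq 1/\max_j\rho_j$, so $\rho_i=\max_j\rho_j$ for each of them. Therefore there are at least two Perron components at $v$.

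The main obstacle is the bookkeeping in the forward direction. The key step is showing that ``$\chi_c(T)=\{v\}$'' forces $Y(v)=0$ and forces each nonzero $Y_i$ to be strictly one-signed. I would handle this with Fiedler's sign-structure theorem for trees (\cite{Fie2}, Theorem 3.14), which says that along any path leaving the characteristic vertex the entries of $Y$ are monotone and do not change sign. This removes the case analysis.
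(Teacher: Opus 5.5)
\textbf{There is a genuine gap, in the last step of ($\Rightarrow$).} Your ($\Leftarrow$) direction is correct. So is ($\Rightarrow$) up to the point where you have established the following.
\begin{itemize}
\item $Y(v)=0$.
\item Every nonzero restriction $Y_i$ is strictly one-signed and satisfies $\hat{L}(C_i)Y_i=\mu Y_i$, so $\mu=1/\rho_i$.
\item There are at least two such components.
\end{itemize}
The paper gives no proof of this statement; it only cites Kirkland--Neumann--Shader, so there is no in-paper argument to compare against.

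The failing inference is this. From (a) you get $1/\rho_i=\mu\geq 1/\max_j\rho_j$, which only says $\rho_i\leq\max_j\rho_j$. That is trivially true and is not what you need. To conclude that $C_i$ is a Perron component you need the reverse, an \emph{upper} bound $\mu\leq 1/\rho_m$ for every component $C_m$ at $v$. In other words, you must rule out a component on which $Y\equiv 0$ having a strictly larger Perron value.

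Interlacing cannot supply this. Fact (a) holds at every vertex, but the upper bound is false in general: at an end vertex of $P_3$ one has $\mu=1>(3-\sqrt{5})/2=1/\max_j\rho_j$. So the bound must use that $Y$ vanishes at $v$ and is supported on at least two components.

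\textbf{The repair} is the test-vector trick you already used for ($\Leftarrow$). Suppose $\rho_m>1/\mu$ for some $m$; then $C_m$ is not one of the components carrying $Y$, since those have $\rho=1/\mu$. Pick a component $C_i$ with, say, $Y_i>0$, and let $x_m>0$ be a Perron vector of $\hat{L}(C_m)$. Define $Z$ to be $x_m$ on $C_m$, $-cY_i$ on $C_i$ with $c>0$ chosen so that $Z\perp\mathbf{1}$, and $0$ elsewhere, including at $v$.

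Since $Z(v)=0$, the quadratic form splits over the blocks, giving $Z^TL(T)Z=\|x_m\|^2/\rho_m+c^2\mu\|Y_i\|^2<\mu\|Z\|^2$. This contradicts $\mu=\min\{Z^TL(T)Z/Z^TZ : Z\perp\mathbf{1},\ Z\neq 0\}$. Hence $\rho_m\leq 1/\mu=\rho_i$ for every $m$. So every component on which $Y$ is nonzero, and there are at least two, is a Perron component at $v$, which completes ($\Rightarrow$).
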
 

\begin{theorem}[\cite{Kirk2}, Proposition 2]\label{thm:ch3}
Let $T$ be a tree. If $v\notin \chi_c(T)$ then the unique Perron component at $v$ contains the characteristic center of $T$.
\end{theorem}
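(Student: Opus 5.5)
The plan is a contradiction argument built on Theorems \ref{thm:ch1} and \ref{thm:ch2}, plus one monotonicity property of Perron values that follows from Lemma \ref{lem:ch1}.

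\textbf{Step 1 (unique Perron component at $v$).} Since $v\notin\chi_c(T)$, Theorem \ref{thm:ch2} rules out two or more Perron components at $v$. So there is a unique Perron component $C$ at $v$. The set $\chi_c(T)$ is a single vertex or two adjacent vertices (Theorem \ref{p and p}), and it avoids $v$. Hence it lies entirely in one component $D$ of $T-v$. Suppose, for contradiction, that $D\neq C$.

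\textbf{Step 2 (monotonicity).} I will show: if $A$ is a component of $T-a$ and $B$ is a component of $T-b$ with $V(A)\subseteq V(B)$, then $\rho(A)\le\rho(B)$, with strict inequality if $V(A)\subsetneq V(B)$. Here $\rho$ denotes the Perron value, computed at the vertex in question.
\begin{itemize}
\item By Lemma \ref{lem:ch1}, for $i,j\in V(A)$ the $(i,j)$ entry of $\hat L(A)^{-1}$ counts the common edges of $P_{ia}$ and $P_{ja}$. The corresponding entry of $\hat L(B)^{-1}$ counts the common edges of $P_{ib}$ and $P_{jb}$.
\item The situation $V(A)\subseteq V(B)$ with $b\notin V(A)$ forces $a$ to lie on every path from $V(A)$ to $b$. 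So $P_{ib}$ is $P_{ia}$ followed by $P_{ab}$, and the common part only grows.
\item Thus $\hat L(A)^{-1}$ is entrywise dominated by a principal submatrix of the positive matrix $\hat L(B)^{-1}$. Perron--Frobenius monotonicity gives $\rho(A)\le\rho(B)$.
\item If $B$ is strictly larger, the principal submatrix is proper, and irreducibility of the positive matrix $\hat L(B)^{-1}$ makes the inequality strict.
\end{itemize}

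\textbf{Step 3 (locating a large component at $z$).} Let $z\in\chi_c(T)$ be the vertex nearest to $v$. I claim there is a component $K$ of $T-z$ that does not contain $v$ and is a Perron component at $z$.
\begin{itemize}
\item If $\chi_c(T)=\{z\}$: by Theorem \ref{thm:ch2} there are at least two Perron components at $z$, and at most one of them contains $v$.
\item If $\chi_c(T)=\{z,z'\}$: by Theorem \ref{thm:ch1} the unique Perron component at $z$ is the one containing $z'$. It does not contain $v$, because $z'$ is farther from $v$ than $z$.
\end{itemize}
Let $K_v$ be the component of $T-z$ that contains $v$.

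\textbf{Step 4 (chaining the inequalities).}
\begin{itemize}
\item $C$ is a component of $T-v$ not containing $z$, so $V(C)\subseteq V(K_v)$.
\item $D$ contains $z$ together with all of $K$, so $V(K)\subsetneq V(D)$.
\end{itemize}
Step 2 then gives
\[
\rho(C)\le\rho(K_v)\le\rho(K)<\rho(D),
\]
where the middle inequality holds because $K$ is a Perron component at $z$. This contradicts $C$ being the Perron component at $v$. Hence $D=C$, so $C$ contains $\chi_c(T)$.

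I expect the main obstacle to be Step 2: checking carefully that the bottleneck entries really are dominated, and getting strictness from irreducibility. Everything else is bookkeeping with Theorems \ref{thm:ch1} and \ref{thm:ch2}.
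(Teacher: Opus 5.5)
The paper gives no proof of this statement. It is imported from Kirkland, Neumann and Shader (their Proposition~2) and used only as a black box in the proof of Theorem \ref{thm:ch4}. Your argument is correct, and it derives the statement from facts the paper already imports: Theorems \ref{p and p}, \ref{thm:ch1}, \ref{thm:ch2}, Lemma \ref{lem:ch1}, and the Perron--Frobenius remark preceding Theorem \ref{thm:ch4}.

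Your Step~2 is in fact an exact identity. When $a$ separates $V(A)$ from $b$, the $(i,j)$ entry of $\hat L(B)^{-1}$ equals the $(i,j)$ entry of $\hat L(A)^{-1}$ plus $d(a,b)$, because no edge of the $a$--$b$ path has an endpoint in $A$. Strictness for $V(A)\subsetneq V(B)$ is then exactly the paper's remark that $A\ll B$ with $B$ irreducible forces $\rho(A)<\rho(B)$.

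Choosing $z$ nearest to $v$ in Step~3 correctly ensures that, when $|\chi_c(T)|=2$, the Perron component at $z$ given by Theorem \ref{thm:ch1} avoids $v$. The containments $V(C)\subseteq V(K_v)$ and $V(K)\subsetneq V(D)$ in Step~4 are also right; the first is even proper, though you don't need that.

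What your route buys is that it isolates the only two ingredients actually needed. One is the Perron-component characterization of $\chi_c(T)$. The other is monotonicity of Perron values under nesting of branches taken at different vertices. This is the same mechanism the paper uses without comment when it asserts relations like $\hat L(D_1)^{-1}\ll\hat L(C_1)^{-1}$ in Theorem \ref{thm:ch4}.

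The one caveat is that your argument is non-circular only because Theorems \ref{thm:ch1} and \ref{thm:ch2} are taken as independently established, which is how the paper presents them.
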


For a real square matrix $A$, we denote the spectral radius of $A$ by $\rho(A)$. For non-negative square matrices $A$ and $B$ with order of $B$ is greater or equal to order of A, by the notation $A\ll B$  we mean that there exists permutation matrices $P$ and $Q$ such that $P^TAP$ is entry wise dominated by a principal submatrix of $Q^TBQ$, with strict inequality in at least one place, in case $A$ and $B$ have same order. Using Perron-Frobenius theory, one can prove that if $B$ is irreducible with $A\ll B$  then $\rho(A)<\rho(B)$.  

\begin{theorem}\label{thm:ch4}
Let T be a tree on $n\geq 6$ vertices. Then  $d_T(\chi_c,\mathfrak{C})\leq d_{P_{n-g,g}}(\chi_c,\mathfrak{C})$ for some positive integer $g$.
\end{theorem}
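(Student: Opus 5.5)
We follow the same two-stage reduction used in Theorems \ref{thm:cc1}, \ref{thm:cd1} and \ref{thm:sc1}; the only new ingredient is controlling the characteristic center under the two perturbations. Write $\mathfrak{C}(T)=\{w,u\}$ and $\chi_c(T)=\{x,y\}$, and assume $d_T(\chi_c,\mathfrak{C})=d_T(u,x)\geq 1$. Let $v$ be the neighbour of $u$ on the $u$-$x$ path, so $v\notin\mathfrak{C}(T)$ and $\epsilon_T(u)>\epsilon_T(v)$. Let $X,Y$ be the components of $T-uv$ containing $u,v$; then $\chi_c(T)\subseteq V(Y)$.

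\textbf{First stage.} Choose a pendant vertex $r$ of $T$ in $Y$ such that the $v$-$r$ path passes through $\chi_c(T)$. Such an $r$ exists because, by Theorem \ref{thm:ch3}, the Perron component at any vertex outside $\chi_c(T)$ points toward $\chi_c(T)$; so we may extend the $v$-$x$ path beyond $x$ (and $y$) into a Perron component. Form $T'$ by keeping only the $v$-$r$ path of $Y$ and hanging all deleted vertices as pendant vertices on the neighbour of $r$. By Proposition \ref{prop4}, $\epsilon_{T'}(u)>\epsilon_{T'}(v)$, so by Corollary \ref{cor1} the core center of $T'$ stays in $X\cup\{u\}$. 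It remains to show that $\chi_c(T')$ lies on the $x$-$r$ part of the path, i.e.\ it does not move toward $u$. We argue through Theorems \ref{thm:ch1}--\ref{thm:ch3}: at the vertex $x$, compare the component $A$ of $T'-x$ containing $u$ (unchanged from $T$) with the component $B$ containing $r$. In $T'$, every deleted vertex has been reattached farther from $x$, inside $B$. By Lemma \ref{lem:ch1}, the entries of the bottleneck matrix are the numbers of common edges of paths to $x$. Hence the bottleneck matrix of the corresponding component in $T$ satisfies $\ll$ with respect to that of $B$, so its Perron value strictly increases, while that of $A$ is unchanged. Since $A$ was not a unique Perron component at $x$ in $T$ (as $x\in\chi_c(T)$), after the change $B$ is the unique Perron component at every vertex on the $u$-$x$ path. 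By Theorem \ref{thm:ch3}, $\chi_c(T')$ then lies in $B\cup\{x\}$, which gives $d_{T'}(\chi_c,\mathfrak{C})\geq d_T(\chi_c,\mathfrak{C})$. Iterate until $Y$ is a path-star tree with initial vertex $v$.

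\textbf{Second stage.} Let $\epsilon_T(v)=f_T(v,s)$; by Lemma \ref{lem2}, $s\in X$. Form $\hat T$ by keeping only the $u$-$s$ path of $X$ and moving the remaining vertices to pendants at the neighbour of $r$. Proposition \ref{prop5} gives $\epsilon_{\hat T}(u)>\epsilon_{\hat T}(v)$, so the core center stays on the $s$-side. For the characteristic center we again check Perron components at $x$. The component toward $u$ becomes the $u$-$s$ path portion, whose bottleneck matrix is a principal submatrix of the old one, so its Perron value does not increase. The component toward $r$ gains vertices and its bottleneck matrix strictly dominates the old one. So at each vertex between $u$ and $x$, the unique Perron component still points toward $r$, and $\chi_c(\hat T)$ lies at or beyond $x$. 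Since $\hat T$ is a path-star tree $P_{n-g,g}$, the result follows.

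\textbf{Main obstacle.} The delicate step is the Perron-component comparison. One must confirm that the side of $x$ toward $r$ truly dominates in the $\ll$ sense after vertices are relocated, since those vertices may have come from other branches at $x$, not only from $B$. Moving them removes or shrinks other components at $x$, which only helps. But the claim must hold at every vertex between $u$ and $x$, not just at $x$. We would handle this by showing that at each such vertex $z$, the component containing $r$ in $T'$ contains the old Perron component of $T$ at $z$, enlarged. Meanwhile, the component containing $u$ is unchanged in the first stage and shrunk in the second.
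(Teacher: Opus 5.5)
\textbf{There is a genuine gap in your first stage, and it comes from how you choose $r$.} The core-center side, via Propositions \ref{prop4} and \ref{prop5}, is fine. The problem is on the Perron-component side.

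You only require that the $v$-$r$ path pass through $\chi_c(T)$ into a Perron component at $x$. With that choice, your key assertion that ``every deleted vertex has been reattached farther from $x$'' is false. Write $C_j$ for the component of $T-x$ containing $r$. Vertices deeper than $d_T(x,r)$, whether in $C_j$ or in other components at $x$, are moved \emph{closer} to $x$. Then $\hat L(C_j)^{-1}\ll\hat L(B)^{-1}$ fails, and the Perron value of $B$ can fall below that of $A$.

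Here is a concrete counterexample with $n=20$. Take the path $w_6w_5\cdots w_1xyb_1$, attach one pendant $r$ at $y$, and attach ten pendants $c_1,\ldots,c_{10}$ at $b_1$.
\begin{itemize}
\item Core center of $T$: $\epsilon^d(T)=f_T(w_6,c_1)=2^{10}$, and with $w_0=x$ we get $\epsilon(w_i)=\min\{2^{11}+i+3,(7-i)2^{10}\}$. Hence $\mathfrak{C}(T)=\{w_4\}$.
\item Characteristic center of $T$: at $x$, the path $w_1\cdots w_6$ has Perron value $1/(4\sin^2(\pi/26))\approx 17.2$. The other component contains the block $2J_{10}+I_{10}$ ($J$ all-ones), whose Perron value is $21$. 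At $y$, the component $\{x,w_1,\ldots,w_6\}$ has Perron value $\approx 22.9$, beating $\{r\}$ (value $1$) and $\{b_1,c_1,\ldots,c_{10}\}$ (value $\approx 11.9$). So $\chi_c(T)=\{x,y\}$ and $d_T(\chi_c,\mathfrak{C})=4$.
\item Your rule permits this $r$, since the $w_3$-$r$ path runs through $x$ and $y$. Your $T'$ is then the path $w_6\cdots w_1xy$ with $12$ pendants at $y$, i.e.\ $P_{8,12}$.
\item In $T'$, the component at $x$ containing $r$ is a star with Perron value $(14+\sqrt{192})/2\approx 13.9<17.2$. At $w_1$, the side containing $x$ has Perron value at least $25$, against $\approx 12.3$. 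Hence $\chi_c(T')=\{w_1,x\}$.
\item By Lemma \ref{lem2.1}, $\mathfrak{C}(T')=\{w_4\}$, so the distance drops to $3$.
\end{itemize}
Since $T'$ is already a path-star tree, your second stage changes nothing. Your procedure therefore ends at a path-star tree with strictly smaller distance.

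\textbf{The fix is the paper's choice of $r$.} Take $r$ at maximum distance $\alpha$ from $x$ among \emph{all} vertices of the components $C_2,\ldots,C_k$ of $T-x$ that do not contain $u$.
\begin{itemize}
\item Every vertex of $C_2\cup\cdots\cup C_k$, and every vertex moved out of $C_1$, then sits at depth at most $\alpha$ in the new broom component $F$.
\item Lemma \ref{lem:ch1} gives $\hat L(C_2\cup\cdots\cup C_k)^{-1}\ll\hat L(F)^{-1}$ for the block-diagonal matrix.
\item Hence $\rho(F)\ge\max_{i\ge2}\rho(C_i)\ge\rho(C_1)\ge\rho(A)$. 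The middle inequality holds because $C_1$ is not the unique Perron component at $x$.
\end{itemize}
This $r$ need not lie in a Perron component or beyond $y$. So you must compare against the whole union, not a single $C_j$.

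It is enough to do this comparison at $x$ alone: Theorems \ref{thm:ch1}--\ref{thm:ch3} then place $\chi_c$ in $F\cup\{x\}$. Your worry about ``every vertex on the $u$-$x$ path'' is therefore unnecessary.

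A minor point: $A$ is not literally unchanged in stage one. Branches of $C_1$ hanging off the $v$-$x$ path lie in $Y$ and get moved. This only shrinks $A$, so it does no harm.
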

\begin{proof}
Let $T$ be a tree of on $n\geq 6$ vertices. Let $\mathfrak{C}(T)=\{w,u\}$ and $\chi_c(T)=\{x,y\}$ where $w=u$ if $|\mathfrak{C}(T)|=1$ and $x=y$ if $|\chi_c(T)|=1$.  The result is obvious if $d_T(\chi_c,\mathfrak{C})=0$, so assume that  $d_T(\chi_c,\mathfrak{C})=d_T(u,x)\geq 1$.

Let  $C_1,C_2,\ldots,C_k$ be the connected components of $T-x$ and let $C_1$ be the component containing $u$. For $i\in \{2,\ldots,k\}$, let $\alpha_i=\max\{d_T(x,z)|z\in V(C_i)\}$. Let $\alpha=\max\{\alpha_i|2\leq i\leq k\}$ and suppose $\alpha=d_T(x,r)$. Then clearly $r$ is a pendant vertex of $T$ and without loss of generality, take $r\in V(C_2)$.

Let $B_1,B_2,\ldots,B_l$ be the branches of $T$ at $u$ and let the branch $B_1$ contains $\chi_c(T).$ Let $v$ be the vertex of $B_1$ adjacent to $u$. Clearly, $v\notin \mathfrak{C}(T)$ and so $\epsilon_T(u)>\epsilon_T(v)$. Let $X$ and $Y$ be the components of $T-uv$ containing $u$ and $v$, respectively. Then $r$ is in the component $Y$. Form  a new tree $T'$ from $T$ by deleting all the vertices of $Y$ except the vertices that lie on the $v$-$r$ path and adding the same number of pendant vertices to the vertex adjacent to $r$. By Proposition \ref{prop4}, $\epsilon_{T'}(u)>\epsilon_{T'}(v)$. 

Now in $T'$, let $X'$ and $Y'$ be the components of $T'-uv$ containing $u$ and $v$, respectively. Then $X=X'$ and $Y'$ is a path-star tree with initial vertex $v$. Let  $\epsilon_{T'}(v)=f_{T'}(v,s)$. Then by Lemma \ref{lem2}, $s$ is in $X'$. If  the $u$-$s$ path is the whole component $X$, then $T'$ is a path-star tree. Otherwise, construct a new tree 
$\hat{T}$ from $T'$ by deleting all the vertices of $X'$ except the vertices that lie on the $u$-$s$ path and adding the same number of pendant vertices to the vertex adjacent to $r$. Now the new tree $\hat{T}$ is a path-star tree. By Proposition \ref{prop5}, $\epsilon_{\hat{T}}(u)>\epsilon_{\hat{T}}(v)$ and so $\mathfrak{C}(\hat{T})=\mathfrak{C}(T)$ or $\mathfrak{C}(\hat{T})$ moves towards the vertex $s$(in fact move away from $x$).

We will now obtain $\hat{T}$ from $T$ by using some different perturbation. If $\chi_c(T)=\{x,y\}$, then by Theorem \ref{thm:ch1}, the component containing $y$ at $x$ is the Perron component and $y\notin V(C_1)$. Also, if $\chi_c(T)=\{x\}$ then by Theorem \ref{thm:ch2}, $T-x$ has atleast two Perron components. So in $T$, atleast one component among $C_2,\ldots,C_k$ is a Perron component at $x$. 

In $T$, the vertex $s$ is in the component $C_1$. Let $w'$ be the vertex of $C_1$ adjacent with $x$. Form a new tree $T_1$ from $T$  by deleting all the vertices of $C_1$ except the vertices that lie on the $w'$-$s$ path and adding the same number of pendant vertices to the vertex adjacent to $r$. Note that if $C_1$ is the $w'$-$s$ path then $T_1=T$. Now, let $D_1,D_2,\ldots,D_k$ be the connected components of $T_1-x$, where $D_i$ is the component corresponding to $C_i$ for $i=1,2,\ldots,k$. Here the $w'$-$s$ path is the component $D_1$. Then by Lemma \ref{lem:ch1}, $\hat{L}(D_1)^{-1}\ll\hat{L}(C_1)^{-1}$ and so $D_1$ is not a Perron component of $T_1$ at $x$. Thus, $\chi_c(T_1)=\chi_c(T)$ or $\chi_c(T_1)$ move away from $s$(in fact move away from $u$) which follows from Theorem \ref{thm:ch3}. 

Let $D\equiv \cup_{i=2}^kD_i$. Note that the vertex $r$ lies in the component $D_2$ and let $z'$ be the vertex of $D_2$ adjacent with $x$. Construct a new tree $T_2$  from $T_1$ by deleting all the vertices of $D$ except the vertices that lie on the $z'$-$r$ path and adding the same number of pendant vertices to the vertex adjacent to $r$. Clearly $T_2$ is a path-star tree and it is isomorphic to $\hat{T}$. Let $F_1$ and $F_2$ be the two components of $T_2$ at $x$ where $F_1$ is same as $D_1$. By Lemma \ref{lem:ch1}, $\hat{L}(D)^{-1}\ll\hat{L}(F_2)^{-1}$ and so $F_2$ is the Perron component of $T_2$ at $x$. So $\chi_c(\hat{T})=\chi_c(T_2)=\chi_c(T)$ or $\chi_c(T_2)$ move towards the vertex $r$(in fact move away from $u$) which follows from Theorem \ref{thm:ch3}.

Thus from the tree $T$, we construct a new tree $\hat{T}$ such that $\chi_c(\hat{T})=\chi_c(T)$ or $\chi_c(\hat{T})$ moves away from $u$ towards the vertex $r$. Also, $\mathfrak{C}(\hat{T})=\mathfrak{C}(T)$ or $\mathfrak{C}(\hat{T})$ moves away from $x$ towards the vertex $s$. So, $d_T(\chi_c,\mathfrak{C})\leq d_{\hat{T}}(\chi_c,\mathfrak{C})$ where $\hat{T}$ is isomorphic to $P_{n-g,g}$ for some positive integer $g$. This proves the result.
\end{proof}

The following  lemma help us to obtain a path-star tree which maximizes the distance between characteristic center and core center.

\begin{lemma}[\cite{Klp}, Proposition 3.1, 3.2, 3.3 and 3.4]\label{lem:ch2}
Let $n\geq 5$ with $2\leq g\leq n-3$. Then we have the following:
\begin{enumerate}
\item If $\chi_c(P_{n-g,g})= \{i,i+1\}$  then $\chi_c(P_{n-g+1,g-1})=\{i,i+1\}$ or $\{ i+1 \}$ or $\{i+1,i+2\}$;
 
\item If $\chi_c(P_{n-g,g})= \{i\}$  then $\chi_c(P_{n-g+1,g-1})=\{i,i+1\}$;

\item If $\chi_c(P_{n-g,g})= \{i,i+1\}$    then $\chi_c(P_{n-g-1,g+1})=\{i-1,i\}$ or $\{i\}$ or $\{i,i+1\}$;

\item if $\chi_c(P_{n-g,g})= \{i\}$  then $\chi_c(P_{n-g-1,g+1})=\{i-1,i\}$.
\end{enumerate}  
\end{lemma}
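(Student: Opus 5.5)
The approach compares Perron components vertex by vertex, using the bottleneck description of Lemma \ref{lem:ch1} together with Theorems \ref{thm:ch1}, \ref{thm:ch2} and \ref{thm:ch3}. Parts (1) and (2) are proved directly, and parts (3) and (4) are deduced from them.

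\textbf{Setup.} Write $T=P_{n-g,g}$ and $T'=P_{n-g+1,g-1}$. Realize $T'$ from $T$ by keeping the labels $1,\ldots,n-g$ and the pendant vertex $n-g+1$. Every other pendant vertex of $n-g$ is moved so that it hangs from $n-g+1$.

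For a vertex $j$ with $2\le j\le n-g$, $T-j$ has a left component $L_j$, which is the path $1\cdots j-1$. This is the same in $T$ and $T'$. The remaining part of $T-j$ is the right component $R_j$, with counterpart $R'_j$ in $T'$.

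Under the obvious bijection of vertices, every path from a vertex of $R_j$ to $j$ has at least as many edges in $T'$ as in $T$. Moreover, the moved pendants gain an edge. By Lemma \ref{lem:ch1} this gives $\hat L(R_j)^{-1}\ll \hat L(R'_j)^{-1}$, and hence $\rho(R_j)<\rho(R'_j)$ by Perron--Frobenius.

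\textbf{Step 1: the characteristic center does not move left.} Suppose the characteristic center of $T$ contains $i$. Then at every vertex $j\le i$ the right component is a (possibly tied) Perron component in $T$. By Step 1's inequality it is the unique Perron component in $T'$. Theorems \ref{thm:ch2} and \ref{thm:ch3} then place $\chi_c(T')$ to the right of $j$. Applying this at $j=i-1$ shows that $\chi_c(T')\subseteq\{i,i+1,\ldots\}$. In case (2) it also applies at $j=i$, giving $\chi_c(T')\subseteq\{i+1,\ldots\}\cup\{i\}$ with $i$ not a tie vertex.

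\textbf{Step 2: the characteristic center moves at most one step right.} This is the main step; it compares $T'$ at one vertex with $T$ at the previous vertex.

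In case (1), we have $\chi_c(T)=\{i,i+1\}$. By Theorem \ref{thm:ch1}, $L_{i+1}$ is the unique Perron component of $T$ at $i+1$. Now compare $T'$ at $i+2$ with $T$ at $i+1$.
\begin{itemize}
\item $L'_{i+2}$ is a path on $i+1$ vertices, which contains $L_{i+1}$ (a path on $i$ vertices) shifted. So $\rho(L_{i+1})<\rho(L'_{i+2})$.
\item $R'_{i+2}$ is a broom with the same handle length as $R_{i+1}$, namely $n-g-i-1$ path vertices, but with one fewer pendant. So $\hat L(R'_{i+2})^{-1}\ll\hat L(R_{i+1})^{-1}$.
\end{itemize}
These combine into the chain
\[
\rho(R'_{i+2})<\rho(R_{i+1})<\rho(L_{i+1})<\rho(L'_{i+2}).
\]
Thus the left component is the unique Perron component of $T'$ at $i+2$, and Theorem \ref{thm:ch3} gives $\chi_c(T')\subseteq\{\ldots,i+1\}$. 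Together with Step 1 and Theorem \ref{p and p}, this leaves exactly $\{i,i+1\}$, $\{i+1\}$ or $\{i+1,i+2\}$.

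In case (2), we have $\chi_c(T)=\{i\}$. Here $i$ has degree $2$ and there are two tied Perron components. (The degenerate case $i=n-g$ is to be handled separately by a direct check.) So $\rho(L_i)=\rho(R_i)$. The same comparison, now between $T'$ at $i+1$ and $T$ at $i$, gives
\[
\rho(R'_{i+1})<\rho(R_i)=\rho(L_i)<\rho(L'_{i+1}).
\]
In $T'$ the left component is thus the unique Perron component at $i+1$. By Step 1 the right component is the unique Perron component at $i$. Theorem \ref{thm:ch1} then yields $\chi_c(T')=\{i,i+1\}$.

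\textbf{Step 3: parts (3) and (4).} Apply (1) and (2) to the pair $P_{n-g-1,g+1}\to P_{n-g,g}$ and argue by exhaustion.
\begin{itemize}
\item If $\chi_c(P_{n-g-1,g+1})=\{a\}$, then (2) forces $\chi_c(P_{n-g,g})=\{a,a+1\}$.
\item If $\chi_c(P_{n-g-1,g+1})=\{a,a+1\}$, then (1) forces $\chi_c(P_{n-g,g})$ to be one of $\{a,a+1\}$, $\{a+1\}$ or $\{a+1,a+2\}$.
\end{itemize}
Reading these implications backwards from a given $\chi_c(P_{n-g,g})$ gives exactly the options listed in (3) and (4).

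The main obstacle is the domination claim in Step 2. One has to check, via Lemma \ref{lem:ch1}, that the bottleneck matrix of a broom with one fewer pendant is a principal submatrix of the other. One must also handle the boundary cases where $i+2$ exceeds $n-g+1$, or where a component is empty, separately.
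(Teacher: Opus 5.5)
The paper gives no proof of this lemma; it is quoted from \cite{Klp}, so there is no internal argument to compare against. Your proof is correct and self-contained. It uses exactly the tools the paper recalls for this purpose: Lemma \ref{lem:ch1}, Theorems \ref{thm:ch1}--\ref{thm:ch3} and the $\ll$ criterion.

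The domination $\hat L(R_j)^{-1}\ll\hat L(R'_j)^{-1}$ is valid. So are the chains $\rho(R'_{i+2})<\rho(R_{i+1})<\rho(L_{i+1})<\rho(L'_{i+2})$ and $\rho(R'_{i+1})<\rho(R_i)=\rho(L_i)<\rho(L'_{i+1})$.

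Deriving (3) and (4) from (1) and (2) by reading the same pair of trees in the other direction is a real economy. It needs the fact that $\chi_c$ of a path-star tree is $\{a\}$ or $\{a,a+1\}$ on the spine. This holds because a singleton component is never the unique Perron component when $n\ge 3$.

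Some statements need tightening, but none is a real gap:
\begin{itemize}
\item Theorem \ref{thm:ch3} applies only to vertices outside $\chi_c$. Suppose a component $K$ at $j$ is the unique Perron component. Then you only get that either $\chi_c(T')\subseteq K$ or $\chi_c(T')=\{j,j'\}$ with $j'$ the neighbour of $j$ in $K$ (Theorems \ref{thm:ch1}, \ref{thm:ch2}).

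So at $i+2$ the conclusion is $\chi_c(T')\subseteq\{1,\ldots,i+2\}$, with $i+2$ occurring only in $\{i+1,i+2\}$. Your intermediate claim $\chi_c(T')\subseteq\{\ldots,i+1\}$ is too strong; it would contradict your own final list.

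Likewise, to exclude $i-1$ in Step 1 you need the right component to be the unique Perron component of $T'$ at $j=i$, not at $j=i-1$. This holds in both cases, because $\rho(R'_i)>\rho(R_i)\ge\rho(L_i)=\rho(L'_i)$.

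\item Justify $\hat L(R_j)^{-1}\ll\hat L(R'_j)^{-1}$ on the entries, i.e.\ numbers of common edges of two paths, not just on path lengths. The entries in the rows and columns of the $g-1$ moved pendants increase by $1$, and all other entries are unchanged. For $j=n-g$ the right part of $T-j$ consists of $g$ singleton components, but you only need $j\le i\le n-g-1$.

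\item The case $i=n-g$ in (2) cannot occur when $g\le n-3$. At the hub, the path $1\cdots n-g-1$ has at least two vertices and Perron value $>1$, while the other components are singletons with Perron value $1$.

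It does arise in Step 3 when $g=n-3$. There you apply (2) to the pair $P_{2,n-2}$, $P_{3,n-3}$, which lies outside the range $2\le g\le n-3$ of (1) and (2). The direct check is $\chi_c(P_{3,n-3})=\{2,3\}$: at $2$ the star on $\{3,\ldots,n\}$ is the unique Perron component, and at $3$ the path $1\,2$ is.

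The boundary $i+1=n-g$ in (1) is equally immediate: at $n-g+1$ in $T'$, the path $1\cdots n-g$ beats the singleton components.
\end{itemize}
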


\begin{theorem}\label{thm:ch5}
Let T be a tree on $n\geq 6$ vertices. Let $g_0\geq 2$ be the smallest positive integer such that $2^{g_0-1}+g_0\geq n-3$. Then  $d_T(\chi_c,\mathfrak{C})\leq d_{P_{n-g_0,g_0}}(\chi_c,\mathfrak{C})$.
\end{theorem}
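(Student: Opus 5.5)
By Theorem \ref{thm:ch4}, it is enough to show that among the path-star trees $P_{n-g,g}$ with $2\le g\le n-3$, the distance $d_{P_{n-g,g}}(\chi_c,\mathfrak{C})$ is largest at $g=g_0$. The cases $g=1$ and $g=n-2$ give distance $0$, so they can be ignored. The plan is to compare $g_0$ with larger $g$ and with smaller $g$ separately. In both directions we track how far the core center and the characteristic center can shift when $g$ changes by one. All central parts of $P_{n-g,g}$ lie on the path $2,\ldots,n-g$, so distances are just differences of labels.

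First we fix the relative position. For every $g$, the core center $\mathfrak{C}(P_{n-g,g})$ lies to the left of the characteristic center, that is, at smaller labels. To see this, note that $\mathfrak{C}$ lies between $2$ and $C$ by Proposition \ref{prop3}, and $\chi_c$ lies on the path from $C$ to $C_d$ (cited from \cite{Klp}). Since $C_d$ lies on the same side of $C$ as $S_c$, namely towards $n-g$, the characteristic center is at or to the right of the center, and hence of the core center. So $d(\chi_c,\mathfrak{C})$ equals the difference between the smallest label of $\chi_c$ and the largest label of $\mathfrak{C}$. Let $c_g$ denote the smallest label in $\chi_c(P_{n-g,g})$.

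\textbf{Case $g>g_0$.} By Lemma \ref{lem:cc1}(i), $p_g=3$, so $\mathfrak{C}(P_{n-g,g})\subseteq\{2,3\}$ and the largest core vertex is $3$, the same as for $g_0$. By Lemma \ref{lem:ch2}(3),(4), passing from $g$ to $g+1$ never increases the smallest label of the characteristic center: $\{i,i+1\}$ goes to $\{i-1,i\}$, $\{i\}$ or $\{i,i+1\}$, and $\{i\}$ goes to $\{i-1,i\}$. Hence $c_g$ is non-increasing in $g$. Therefore $d_{P_{n-g,g}}(\chi_c,\mathfrak{C})=c_g-3\le c_{g_0}-3=d_{P_{n-g_0,g_0}}(\chi_c,\mathfrak{C})$.

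\textbf{Case $g<g_0$.} This is the main step. We use Lemma \ref{lem:cc1}(ii) iteratively: for $2\le g<g_0$ we have $2^{g-1}+g<n-3$, so each step down from $g_0$ to $g_0-1$, then to $g_0-2$, and so on, increases $p_g$ by at least one. Thus $p_{g_0-k'}\ge k'+3$, and the smallest label of $\mathfrak{C}(P_{n-g_0+k',g_0-k'})$, which is at least $p-1$, is at least $2+k'$. This lower bound needs care. When $\mathfrak{C}=\{p-1,p\}$ the relevant core vertex for the distance is $p$, but a uniform estimate is cleaner: the largest core label increases by at least $1$ per step.

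By Lemma \ref{lem:ch2}(1),(2), the smallest label of $\chi_c$ increases by at most $1$ per step down in $g$. Indeed, $\{i,i+1\}$ goes to a set with minimum at most $i+1$, and $\{i\}$ goes to $\{i,i+1\}$. So $c_{g_0-k'}\le c_{g_0}+k'$, while the core center's largest label is at least $p_{g_0-k'}\ge 3+k'$. Subtracting gives $d\le c_{g_0}+k'-(3+k')=c_{g_0}-3$. Here we again use that the core center lies left of $\chi_c$, so the distance is this difference, or $0$ if they meet.

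The expected obstacle is the bookkeeping when $\mathfrak{C}$ or $\chi_c$ has two vertices. We must measure the distance between the nearest endpoints, namely the largest core label and the smallest characteristic label, and check that the monotonicity statements above are phrased in terms of exactly these endpoints. We also must confirm the ordering $\mathfrak{C}\le C\le\chi_c$ (as labels) for every $g$, including the degenerate case $C_d=\{n-g\}$. Combining the two cases with Theorem \ref{thm:ch4} gives the theorem.
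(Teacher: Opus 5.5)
Your proposal is correct and follows the paper's proof essentially step for step. You reduce to path-star trees via Theorem~\ref{thm:ch4}, treat $g>g_0$ with Lemma~\ref{lem:cc1}(i) and Lemma~\ref{lem:ch2}(3),(4), and treat $g<g_0$ with $p_{g_0-k'}\ge k'+3$ and Lemma~\ref{lem:ch2}(1),(2); your endpoint bookkeeping (largest core label against smallest characteristic label, $c_{g_0}+k'-(3+k')$) is more careful than the paper's. One small repair, which the paper's proof also skips: for the first step $g_0\to g_0-1$, Lemma~\ref{lem:cc1}(ii) needs $2^{g_0-1}+g_0\le n-3$, which holds only in the equality case, so your stated hypothesis ``for $2\le g<g_0$'' does not cover it. In the remaining case use minimality directly: it gives $n\ge 2^{g_0-2}+g_0+3$, hence $\frac{n-g_0+1+2^{g_0-1}}{1+2^{g_0-2}}\ge 3+\frac{1}{1+2^{g_0-2}}>3$, i.e.\ $p_{g_0-1}\ge 4$; the later steps are covered by Lemma~\ref{lem:cc1}(ii) as you say.
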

\begin{proof}
Let $T$ be a tree on $n\geq 6$ vertices. By Theorem \ref{thm:ch4}, $d_T(C_d,\mathfrak{C})\leq d_{P_{n-g,g}}(C_d,\mathfrak{C})$ for some $g\in \{2,3,\ldots,n-3\}$. Let $g_0\geq 2$ be the smallest positive integer such that $2^{g_0-1}+g_0\geq n-3$  and let $\chi_c(P_{n-g_0,g_0})= \{i\}$ or $\{i,i+1\}$. Then $d_{P_{n-g_0,g_0}}(\chi_c,\mathfrak{C})=i-3$ as $\mathfrak{C}(P_{n-g_0,g_0})=\{2,3\}$ or $\{3\}$.

Let $g'=g_0+1$. By Lemma \ref{lem:cc1}(i), $\mathfrak{C}(P_{n-g',g'})=\{2,3\}$ or $\{3\}$ and hence $d_{P_{n-g',g'}}(\chi_c,\mathfrak{C})=i-3$ or $i-4$ which follows from Lemma \ref{lem:ch2} (3,4). Thus $d_{P_{n-g_0,g_0}}(\chi_c,\mathfrak{C})\geq d_{P_{n-g_0-1,g_0+1}}(\chi_c,\mathfrak{C})$. Continuing in this way, for any $k\geq 1$ with $g_0+k\leq n-3$, we have $d_{P_{n-g_0-k,g_0+k}}(\chi_c, \mathfrak{C})\leq d_{P_{n-g_0,g_0}}(\chi_c, \mathfrak{C})$.

Now let $k'\geq 1$ be an integer such that $g_0-k'\geq 2$.  By Lemma \ref{lem:cc1}, $ p_{g_0-k'}\geq k'+3$.
Then using Lemma \ref{lem:ch2} (1,2), we have $d_{P_{n-g_0+k',g_0-k'}}(\chi_c, \mathfrak{C})\leq i-k'-2\leq i-3 = d_{P_{n-g_0,g_0}}(\chi_c, \mathfrak{C})$. This proves the result
\end{proof}

The exact position of the $\chi_c(P_{n-g,g})$ for every $g$ is not known. So, for tree $T$, getting the strict upper bound for $d_T(\chi_c,\mathfrak{C})$ is difficult. Now we study the asymptotic behaviour of these distances. We define $\delta_n(\chi_c,\mathfrak{C}) = \max\{d_T(\chi_c, \mathfrak{C}) : \mbox{T   is  a tree on $n$ vertices} \}$. Analogously we define $\delta_n(C,\mathfrak{C})$, $\delta_n(C_d,\mathfrak{C})$ and $\delta_n(S_c,\mathfrak{C})$.

\begin{proposition}\label{prop:a1} 
$\displaystyle\lim_{n \to \infty} \frac{\delta_n(C_d,\mathfrak{C})}{n} = \frac{1}{2}.$
\end{proposition}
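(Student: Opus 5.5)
By Theorem \ref{thm:cd2}, for every $n\geq 6$ the maximum is attained by $P_{n-g_0,g_0}$, and its value is
$$\delta_n(C_d,\mathfrak{C})=\left\lfloor \frac{n-5}{2}\right\rfloor .$$
The plan is to use this closed form and squeeze the ratio between two explicit bounds.

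First we fix the value. The theorem gives the upper bound $d_T(C_d,\mathfrak{C})\leq \lfloor \frac{n-5}{2}\rfloor$ for every tree $T$ on $n\geq 6$ vertices. It also states that equality holds for $T\cong P_{n-g_0,g_0}$, where $g_0$ is well defined because $2^{g-1}+g$ is unbounded in $g$. Together these give $\delta_n(C_d,\mathfrak{C})=\lfloor \frac{n-5}{2}\rfloor$ exactly, not merely an upper bound.

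Next we squeeze. Since $\frac{n-6}{2}\leq \lfloor \frac{n-5}{2}\rfloor \leq \frac{n-5}{2}$, dividing by $n$ gives
$$\frac{1}{2}-\frac{3}{n}\leq \frac{\delta_n(C_d,\mathfrak{C})}{n}\leq \frac{1}{2}-\frac{5}{2n}.$$
Both bounds tend to $\frac12$ as $n\to\infty$, so the limit is $\frac12$.

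No step here is a real obstacle. The one point needing care is that the equality case of Theorem \ref{thm:cd2} holds for all $n\geq 6$; this is what makes the lower bound valid, and it was already checked in both parity cases of that proof via Lemma \ref{lem:cd} and Lemma \ref{lem:cc1}(i). If one prefers not to rely on the equality case, the lower bound follows directly. Since $\mathfrak{C}(P_{n-g_0,g_0})\subseteq\{2,3\}$, Lemma \ref{lem2.3} puts the centroid at distance about $n/2-3$ from the core center, which again gives $\delta_n(C_d,\mathfrak{C})\geq \frac{n-6}{2}$.
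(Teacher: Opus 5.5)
Your proof is correct and follows the paper's argument: the paper likewise reads off $\delta_n(C_d,\mathfrak{C})=\lfloor \frac{n-5}{2}\rfloor$ from Theorem \ref{thm:cd2}, using both the upper bound and the equality case at $P_{n-g_0,g_0}$, and then passes to the limit. Your squeeze $\frac{1}{2}-\frac{3}{n}\leq \frac{\delta_n(C_d,\mathfrak{C})}{n}\leq \frac{1}{2}-\frac{5}{2n}$ simply writes out the paper's one-line limit computation explicitly.
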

\begin{proof}
From Theorem \ref{thm:cd2}, we have $\delta_n(C_d,\mathfrak{C})=\lfloor \frac{n-5}{2} \rfloor$. Thus, $$\displaystyle\lim_{n \to \infty} \frac{\delta_n(C_d,\mathfrak{C})}{n} =\displaystyle\lim_{n \to \infty}\frac{\lfloor \frac{n-5}{2} \rfloor}{n}=\frac{1}{2}.$$
\end{proof}
\begin{lemma}\label{lem:ch3}
Let $n\geq 6$. If $g_0$ is the smallest positive integer such that $2^{g_0-1}+g_0\geq n-3$ then $\displaystyle\lim_{n \to \infty}\frac{g_0}{n}=0$.
\end{lemma}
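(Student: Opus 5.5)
The idea is to show that $g_0$ grows only logarithmically in $n$. First I will get an upper bound on $g_0$ from the defining inequality. Put $g_1=\lceil \log_2 n\rceil+1$. Then $2^{g_1-1}\geq 2^{\log_2 n}=n$, so
\[
2^{g_1-1}+g_1\geq n+g_1>n-3 .
\]
For $n\geq 6$ we also have $g_1\geq 2$. Since $g_0$ is the smallest integer $g\geq 2$ with $2^{g-1}+g\geq n-3$, minimality gives
\[
2\leq g_0\leq g_1\leq \log_2 n+2 .
\]

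Dividing by $n$ yields
\[
0<\frac{g_0}{n}\leq \frac{\log_2 n+2}{n}.
\]
The right-hand side tends to $0$ as $n\to\infty$, because $\log_2 n/n\to 0$ (for instance by L'Hôpital's rule, or from $2^{x}\geq x^2$ for $x\geq 4$) and $2/n\to 0$. The squeeze theorem then gives $\lim_{n\to\infty} g_0/n=0$.

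The only point that needs care is that the minimality of $g_0$ is what turns the exhibited witness $g_1$ into an upper bound for $g_0$, and that $g_1\geq 2$, so $g_1$ lies in the admissible range. There is no real obstacle beyond this.
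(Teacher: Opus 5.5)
Your proof is correct and follows essentially the same route as the paper: both use the minimality of $g_0$ to get $g_0\leq \log_2 n+2$ and then conclude from $\log_2 n/n\to 0$. The only cosmetic difference is that the paper takes the bound from the failure of the defining inequality at $g_0-1$ (giving $2^{g_0-2}<n$), whereas you exhibit the explicit witness $g_1=\lceil\log_2 n\rceil+1$.
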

\begin{proof}
Let $g_0$ be the smallest positive integer such that  $2^{g_0-1}+g_0\geq n-3$. Then $2^{g_0-2}+g_0-1< n-3$ and this implies $2^{g_0-2}<n$. Taking logarithm with base $2$ on both side,  we get $g_0-2<\log_2 n$. So, $0<g_0<2+\log_2n$. Therefore $\displaystyle\lim_{n \to \infty}\frac{g_0}{n}=0$ as $\displaystyle\lim_{n \to \infty}\frac{\log_2n}{n}=0$.
\end{proof}
\begin{proposition}\label{prop:a2} 
$\displaystyle\lim_{n \to \infty} \frac{\delta_n(C,\mathfrak{C})}{n} = \frac{1}{2}.$
\end{proposition}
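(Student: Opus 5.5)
By Theorem \ref{thm:cc2}, the maximum of $d_T(C,\mathfrak{C})$ over all trees $T$ on $n\geq 6$ vertices is attained by $P_{n-g_0,g_0}$. Hence
\[
\delta_n(C,\mathfrak{C})=\left\lfloor \frac{n-g_0-4}{2}\right\rfloor ,
\]
where $g_0=g_0(n)\geq 2$ is the smallest positive integer with $2^{g_0-1}+g_0\geq n-3$. The plan is therefore to squeeze $\delta_n(C,\mathfrak{C})/n$ between two explicit quantities and use Lemma \ref{lem:ch3} to control the dependence on $g_0$.

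\textbf{Upper bound.} Since $g_0\geq 2$,
\[
\frac{\delta_n(C,\mathfrak{C})}{n}\leq \frac{n-g_0-4}{2n}\leq \frac{n-6}{2n},
\]
and the right-hand side tends to $\frac{1}{2}$.

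\textbf{Lower bound.} Using $\lfloor a\rfloor> a-1$, we get
\[
\frac{\delta_n(C,\mathfrak{C})}{n}>\frac{n-g_0-6}{2n}=\frac{1}{2}-\frac{g_0}{2n}-\frac{3}{n}.
\]
By Lemma \ref{lem:ch3}, $g_0/n\to 0$ as $n\to\infty$, so the right-hand side also tends to $\frac{1}{2}$.

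The squeeze theorem then gives the stated limit. All the substantive work is already done by Theorem \ref{thm:cc2}, which identifies the exact maximum, and Lemma \ref{lem:ch3}, which shows $g_0=O(\log n)$. The only point needing care is recognizing that $\delta_n$ equals the attained value exactly, not just a bound, and that was established earlier.
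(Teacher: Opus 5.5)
Your proposal is correct and follows the paper's own proof. The paper likewise reads off $\delta_n(C,\mathfrak{C})=\lfloor\frac{n-g_0-4}{2}\rfloor$ from Theorem \ref{thm:cc2} and concludes with Lemma \ref{lem:ch3}; your explicit squeeze (using $g_0\geq 2$ for the upper bound and $\lfloor a\rfloor>a-1$ together with $g_0/n\to 0$ for the lower bound) just writes out the step the paper leaves implicit.
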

\begin{proof}
From Theorem \ref{thm:cc2}, we have $\delta_n(C,\mathfrak{C})=\lfloor \frac{n-g_0-4}{2} \rfloor$. Now the result follows from Lemma \ref{lem:ch3}
\end{proof}

\begin{lemma}[\cite{Klp}, Theorem 3.3]\label{lem:ch4}
Let $n\geq 6$ with $2\leq g \leq n-3$. Then  $\chi_c(P_{n-g,g})$ lies in the path from $C(P_{n-g,g})$ to $C_d(P_{n-g,g})$.
\end{lemma}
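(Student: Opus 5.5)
The plan is to locate $\chi_c(P_{n-g,g})$ through Perron components at the vertices $2,\ldots,n-g$ of the path, combining Lemma \ref{lem:ch1} with Theorems \ref{thm:ch1}, \ref{thm:ch2} and \ref{thm:ch3}.

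\textbf{Set-up.} Fix a vertex $i$ with $2\leq i\leq n-g-1$. The tree $P_{n-g,g}-i$ has exactly two components.
\begin{itemize}
\item $L_i$ is the path $1\cdots (i-1)$, attached to $i$ at the end vertex $i-1$.
\item $R_i$ is the broom consisting of the path $(i+1)\cdots(n-g)$ together with the $g$ pendant vertices at $n-g$. It has $n-i$ vertices and its deepest vertices lie at distance $n-g-i+1$ from $i$.
\end{itemize}
By Lemma \ref{lem:ch1}, the bottleneck matrix of $L_i$ is the ``min'' matrix $(\min(a,b))_{a,b=1}^{i-1}$, where vertices are indexed by their distance to $i$. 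The entries of the bottleneck matrix of $R_i$ are the numbers of common edges of paths to $i$. In every case the Perron value is read off as $\rho$ of these positive matrices. Theorem \ref{thm:ch3} then says: if one component is the unique Perron component at $i$, then $\chi_c$ lies in that component or contains $i$.

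\textbf{Step 1: left of the center.} Take $i$ strictly on the side of $C(P_{n-g,g})$ towards vertex $2$, i.e.\ $i-1<n-g-i+1$ roughly, using Lemma \ref{lem2.2}. Then $R_i$ contains a path from $i$ of length $n-g-i+1\geq i-1$, which carries a copy of the min matrix of order $i-1$. Since $g\geq 2$, $R_i$ also has at least one extra vertex, so $\hat L(L_i)^{-1}\ll \hat L(R_i)^{-1}$. Hence $R_i$ is the unique Perron component at $i$, and $\chi_c$ lies in $\{i,i+1,\ldots\}$. The boundary case, where the center is $\{\frac{n-g+1}{2},\frac{n-g+3}{2}\}$ or $i$ is the center itself, gives only the weaker conclusion that $\chi_c$ is not strictly left of the center. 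That is exactly what is needed.

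\textbf{Step 2: right of the centroid.} Take $i$ on the far side of $C_d(P_{n-g,g})$ (Lemma \ref{lem2.3}), so $i-1\geq n-i$, with the centroid cases $g$ large ($C_d=\{n-g\}$) treated separately. I want to show $L_i$ dominates, i.e.\ $\hat L(R_i)^{-1}\ll \hat L(L_i)^{-1}$. The key comparison is this. For a rooted tree on $m$ vertices, order the vertices by nondecreasing distance to the root and call the positions ranks. Then the number of common edges of paths from $j$ and $k$ is at most $\min(d(j),d(k))\leq \min(\mathrm{rank}(j),\mathrm{rank}(k))$. So the bottleneck matrix of $R_i$ is entrywise dominated by the min matrix of order $n-i\leq i-1$. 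The domination is strict because $R_i$ is not a path when $g\geq 2$. Thus $L_i$ is the unique Perron component, and $\chi_c$ lies in $\{\ldots,i-1,i\}$.

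\textbf{Combining and the main obstacle.} Steps 1 and 2 together confine $\chi_c$, which is one vertex or two adjacent vertices by Theorem \ref{p and p}, to the path between $C(P_{n-g,g})$ and $C_d(P_{n-g,g})$. By Proposition-type ordering, the center is left of the centroid. The main obstacle is the boundary bookkeeping. When $i$ equals a center or centroid vertex, the inequalities $n-g-i+1\geq i-1$ or $i-1\geq n-i$ may be equalities. One must then check that strictness (coming from $g\geq 2$) still yields a unique Perron component, or otherwise use Theorems \ref{thm:ch1} and \ref{thm:ch2} to conclude that $\chi_c$ contains that boundary vertex. I would handle these parity cases of $n$ and $n-g$ one by one, mirroring the case split in Lemmas \ref{lem2.2} and \ref{lem2.3}.
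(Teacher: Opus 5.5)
The paper gives no proof of this lemma; it is imported from \cite{Klp} (Theorem 3.3 there), so there is no internal argument to compare with. Your Perron-component proof is correct and self-contained. It uses only Lemma~\ref{lem:ch1}, Theorems~\ref{thm:ch1}--\ref{thm:ch3} and the monotonicity of $\rho$ under $\ll$, all of which the paper states. Your two comparisons are the right ones. On the center side, a longest path inside $R_i$ carries a copy of the min matrix $\hat L(L_i)^{-1}$. On the centroid side, the rank argument shows that any branch on $m$ vertices is dominated by the path on $m$ vertices.

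The ``main obstacle'' you flag disappears if you test at just two vertices, and no parity case split is needed.

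\textbf{Center side.} Take the leftmost center vertex $c=\lceil\frac{n-g+1}{2}\rceil$; note $2\le c\le n-g-1$. Then $n-g-c+1\ge c-1$. Also $R_c$ has $n-c\ge c$ vertices, because $c\le\frac{n-g+2}{2}\le\frac n2$ for $g\ge 2$. Hence $\hat L(L_c)^{-1}\ll\hat L(R_c)^{-1}$ by order alone, even when $n-g$ is even and the two path lengths coincide.

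\textbf{Centroid side.} Take the rightmost centroid vertex $d$. If $d=n-g$, the components at $d$ are $L_{n-g}$ and $g$ singletons of Perron value $1$, and $L_{n-g}$ wins because $n-g-1\ge 2$. If $d<n-g$, then $n-d\le d-1$, with equality only for $n$ odd and $d=\frac{n+1}{2}$. In that case your rank bound is strict at two pendant vertices adjacent to $n-g$: with $D=n-g-d+1$ their distance from $d$, their common-edge count is $D-1$, while both ranks are at least $D$.

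\textbf{Locating $\chi_c$.} To turn ``unique Perron component at $i$'' into ``$\chi_c$ lies weakly on that side of $i$'', you need Theorem~\ref{thm:ch1} in addition to Theorem~\ref{thm:ch3}. If $c\in\chi_c$, Theorem~\ref{thm:ch2} rules out $\chi_c=\{c\}$ and Theorem~\ref{thm:ch1} rules out $\chi_c=\{c-1,c\}$; the same reasoning applies at $d$. This gives $\chi_c\subseteq\{c,\dots,d\}$.

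\textbf{Ordering.} The fact you attribute to a ``Proposition-type ordering'' is just a comparison of Lemmas~\ref{lem2.2} and~\ref{lem2.3}. Using $g\ge 2$ and $n-g\ge 3$, $c$ is at most the leftmost centroid vertex, and the rightmost center vertex is at most $d$. So $\{c,\dots,d\}$ is exactly the path spanned by $C\cup C_d$, which is the statement.
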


\begin{proposition}\label{prop:a3} 
$\displaystyle\lim_{n \to \infty} \frac{\delta_n(\chi_c,\mathfrak{C})}{n} = \frac{1}{2}.$
\end{proposition}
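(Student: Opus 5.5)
We squeeze $\delta_n(\chi_c,\mathfrak{C})/n$ between two sequences that both tend to $\frac12$, using the path-star tree $P_{n-g_0,g_0}$ for the lower bound.

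\textbf{Upper bound.} By Theorem \ref{thm:ch5}, $\delta_n(\chi_c,\mathfrak{C})= d_{P_{n-g_0,g_0}}(\chi_c,\mathfrak{C})$. Since $\mathfrak{C}(P_{n-g_0,g_0})=\{2,3\}$ or $\{3\}$ (Lemma \ref{lem2.1} with Lemma \ref{lem:cc1}(i)), this distance equals $i-3$, where $i$ is the smaller label of $\chi_c(P_{n-g_0,g_0})$. By Lemma \ref{lem:ch4}, $\chi_c(P_{n-g_0,g_0})$ lies on the path from $C(P_{n-g_0,g_0})$ to $C_d(P_{n-g_0,g_0})$. Hence $i$ is at most the larger of the smaller labels of the center and the centroid. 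Lemmas \ref{lem2.2} and \ref{lem2.3}, together with $g_0\le\lfloor n/2\rfloor$ (Lemma \ref{lem:cd}), give:
\begin{itemize}
\item the center's smaller label is $\lfloor\frac{n-g_0+2}{2}\rfloor\le \frac{n+1}{2}$;
\item the centroid's smaller label is at most $\frac{n+1}{2}$.
\end{itemize}
Therefore $\delta_n(\chi_c,\mathfrak{C})\le \frac{n+1}{2}-3$. Alternatively, one can use the trivial bound $\delta_n\le n$ combined with the fact that every vertex involved lies on the labelled path $1,\dots,n-g_0$.

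\textbf{Lower bound.} Again by Lemma \ref{lem:ch4}, $i$ is at least the minimum of the smaller labels of the center and the centroid. The center's smaller label is $\lfloor\frac{n-g_0+2}{2}\rfloor$. The centroid's smaller label is $\frac{n+1}{2}$ or $\frac n2$ (or $n-g_0$ when $g_0=\lfloor n/2\rfloor$, a case that occurs only for finitely many $n$ because $g_0<2+\log_2 n$). For large $n$ both labels are at least $\frac{n-g_0}{2}$, so
\[ \delta_n(\chi_c,\mathfrak{C})\ \ge\ \frac{n-g_0}{2}-4. \]

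\textbf{Conclusion.} Dividing both bounds by $n$ and applying Lemma \ref{lem:ch3} ($g_0/n\to0$) shows that both tend to $\frac12$, so the limit is $\frac12$ by squeezing.

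The only delicate step is the lower bound: it must use the position of $\chi_c$ between $C$ and $C_d$ (Lemma \ref{lem:ch4}) and must rule out the centroid sitting at $n-g_0$ for large $n$. That exceptional case is also harmless in its own right, since $n-g_0\ge\frac n2$.
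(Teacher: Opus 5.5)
Your argument is correct and follows the paper's route. You reduce to $P_{n-g_0,g_0}$ via Theorem~\ref{thm:ch5}, trap $\chi_c$ between center and centroid via Lemma~\ref{lem:ch4}, and read the bounds off Lemmas~\ref{lem2.2}, \ref{lem2.3} and \ref{lem:ch3}, where the paper instead quotes $\delta_n(C,\mathfrak{C})\le\delta_n(\chi_c,\mathfrak{C})\le\delta_n(C_d,\mathfrak{C})$ together with Propositions~\ref{prop:a1} and \ref{prop:a2}. Do drop the ``alternatively'' aside in your upper bound: $\delta_n\le n$, or even $\delta_n\le n-g_0-3$, only gives $\limsup\delta_n/n\le 1$, not $\frac12$.
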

\begin{proof}
From Theorem \ref{thm:ch5} we have $\delta_n(\chi_c, \mathfrak{C})=d_{P_{n-g_0,g_0}}(\chi_c,\mathfrak{C})$, where $g_0$ is the smallest integer in  satisfying $2^{g_0-1}+g_0\geq n-3$. From Theorem \ref{thm:cc2}, Theorem \ref{thm:cd2} and Lemma \ref{lem:ch4}, we get $\delta_n(\mathfrak{C},C)\leq \delta_n(\mathfrak{C},\chi_c)\leq \delta_n(\mathfrak{C},C_d)$. Thus, $\displaystyle\lim_{n \to \infty}\frac{\delta_n(C,\mathfrak{C})}{n}\leq \displaystyle\lim_{n \to \infty}\frac{\delta_n(\chi_c, \mathfrak{C})}{n}\leq \displaystyle\lim_{n \to \infty}\frac{\delta_n(C_d,\mathfrak{C})}{n}$. Now the result follows form Proposition \ref{prop:a1} and Proposition \ref{prop:a2}.
\end{proof}
\begin{proposition}\label{prop:a4}
$\displaystyle\lim_{n \to \infty} \frac{\delta_n(S_c,\mathfrak{C})}{n} = 1$.
\end{proposition}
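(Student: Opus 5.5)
By Theorem \ref{thm:sc2}, $\delta_n(S_c,\mathfrak{C})=n-g_0-3$ for every $n\neq 7$, where $g_0$ is the smallest positive integer with $2^{g_0-1}+g_0\geq n-3$. Theorem \ref{thm:sc2} gives both directions of this equality. The upper bound $d_T(S_c,\mathfrak{C})\leq n-g_0-3$ holds for every tree $T$ on $n$ vertices. The bound is attained by $P_{n-g_0,g_0}$, since there $\mathfrak{C}=\{2,3\}$ or $\{3\}$ and $S_c=\{n-g_0\}$. Since the limit only concerns large $n$, the exceptional value $n=7$ can be ignored. The plan is therefore to write
\[
\frac{\delta_n(S_c,\mathfrak{C})}{n}=1-\frac{g_0+3}{n}\qquad (n\geq 8)
\]
and then pass to the limit.

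The next step is to control $g_0/n$ using Lemma \ref{lem:ch3}, which gives $\lim_{n\to\infty} g_0/n=0$. Its proof actually yields the explicit bound $0<g_0<2+\log_2 n$. Together with $3/n\to 0$, this gives
\[
1-\frac{5+\log_2 n}{n}<\frac{\delta_n(S_c,\mathfrak{C})}{n}<1 .
\]
Since $(\log_2 n)/n\to 0$, the squeeze theorem gives the limit $1$.

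Almost nothing here is a real obstacle, because all the substantive work is already done in Theorem \ref{thm:sc2} and Lemma \ref{lem:ch3}. The one point to check is that $\delta_n(S_c,\mathfrak{C})$ really equals $n-g_0-3$ and is not just bounded by it. An upper bound alone would give only $\limsup\le 1$, so the extremal example is what supplies the matching lower bound. I would state explicitly that $P_{n-g_0,g_0}$ attains the value for all $n\neq 7$, which is the equality case in Theorem \ref{thm:sc2} (Case-II).
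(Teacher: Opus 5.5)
Your proposal is correct and follows essentially the same route as the paper: you take $\delta_n(S_c,\mathfrak{C})=n-g_0-3$ for $n\geq 8$ from Theorem \ref{thm:sc2}, then use the bound $g_0<2+\log_2 n$ from the proof of Lemma \ref{lem:ch3} and squeeze. Your remark that the real content is attainment by $P_{n-g_0,g_0}$ is apt. The other direction is in fact trivial, since every distance in a tree on $n$ vertices is at most $n-1$, so $\delta_n(S_c,\mathfrak{C})\le n-1$.
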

\begin{proof}
From Theorem \ref{thm:sc2}, we get $\delta_n(S_c,\mathfrak{C})=n-g_0-3$ where, $n\geq 8$ and $g_0$ be the smallest positive integer such that  $2^{g_0-1}+g_0\geq n-3$. Now the result follows from Lemma \ref{lem:ch3}.
\end{proof}

\noindent{\bf Conflict of interest:} On behalf of all authors, the corresponding author states that there is no conflict of interest.

\end{document}